\def\th@plain{%
  \thm@notefont{}
  \itshape 
}
\def\th@definition{%
  \thm@notefont{}
  \normalfont 
}
\setlist[enumerate]{label=(\roman*),leftmargin=0.8cm}
\newtheorem{proposition}{Proposition}[section]
\newtheorem{lemma}[proposition]{Lemma}
\newtheorem{theorem}[proposition]{Theorem}
\newtheorem{corollary}[proposition]{Corollary}
\theoremstyle{definition}
\newtheorem{remark}[proposition]{Remark}
\newtheorem{definition}[proposition]{Definition}
\numberwithin{equation}{section} \setcounter{tocdepth}{1}
\DeclareMathOperator{\Id}{Id}
\DeclareMathOperator{\tr}{tr}
\DeclareMathOperator{\Aut}{Aut}
\DeclareMathOperator{\Ric}{Ric}
\DeclareMathOperator{\V}{V}
\DeclareMathOperator{\Ima}{Im}
\DeclareMathOperator{\Isom}{Isom}
\DeclareMathOperator{\Diff}{Diff}
\DeclareMathOperator{\Lie}{Lie}
\renewcommand{\L}{\mathcal{L}}
\renewcommand{\phi}{\varphi}
\DeclareMathOperator{\isom}{isom}
\newcommand{\N}{\mathbb{N}}
\newcommand{\R}{\mathbb{R}}
\newcommand{\C}{\mathbb{C}}
\newcommand{\pr}{\mathbb{P}}
\renewcommand{\epsilon}{\varepsilon}
\renewcommand{\H}{\mathcal{H}}
\newcommand{\M}{\mathcal{M}}
\newcommand{\scN}{\mathcal{N}}
\newcommand{\scO}{\mathcal{O}}
\newcommand{\scK}{\mathcal{K}}
\newcommand{\ddb}{i\partial \bar\partial}
\newcommand{\scL}{\mathcal{L}}
\newcommand{\mfh}{\mathfrak{h}}
\newcommand{\mfg}{\mathfrak{g}}
\newcommand{\scM}{\mathcal{M}}
\newcommand{\mfk}{\mathfrak{k}}
\newcommand{\mfa}{\mathfrak{a}}
\newcommand{\scD}{\mathcal{D}}
\renewcommand{\V}{\mathcal{V}}
\newcommand{\scV}{\mathcal{V}}
\newcommand{\scR}{\mathcal{R}}
\newcommand{\scH}{\mathcal{H}}
\newcommand{\ddc}{i\partial\bar\partial}
\newcommand{\ddbar}{\partial\bar{\partial}}
\title[Uniqueness of optimal symplectic connections]{Uniqueness of optimal symplectic connections}
\author[Ruadha\'i Dervan and Lars Martin Sektnan]{Ruadha\'i Dervan and Lars Martin Sektnan}
\address{Ruadha\'i Dervan, DPMMS, Centre for Mathematical Sciences, Wilberforce Road, Cambridge CB3 0WB, United Kingdom}
\email{R.Dervan@dpmms.cam.ac.uk}
\address{Lars Martin Sektnan, Institut for Matematik, Aarhus University, 8000, Aarhus C, Denmark}
\email{lms@math.au.dk}
\begin{document}

\begin{abstract} Consider a holomorphic submersion between compact K\"ahler manifolds, such that each fibre admits a constant scalar curvature K\"ahler metric. When the fibres admit continuous automorphisms, a choice of fibrewise constant scalar curvature K\"ahler metric is not unique. An optimal symplectic connection is choice of fibrewise constant scalar curvature K\"ahler metric satisfying a geometric partial differential equation. The condition generalises the Hermite-Einstein condition for a holomorphic vector bundle, through the induced fibrewise Fubini-Study metric on the associated projectivisation. 

We prove various foundational analytic results concerning optimal symplectic connections. Our main result proves that optimal symplectic connections are unique, up to the action of the automorphism group of the submersion, when they exist. Thus optimal symplectic connections are \emph{canonical} relatively K\"ahler metrics when they exist. In addition, we show that the existence of an optimal symplectic connection forces the automorphism group of the submersion to be reductive, and that an optimal symplectic connection is automatically invariant under a maximal compact subgroup of this automorphism group. We also prove that when a submersion admits an optimal symplectic connection, it achieves the absolute minimum of a natural log norm functional which we define.
\end{abstract}

\maketitle

\section{Introduction}

Consider a holomorphic submersion $\pi: X \to B$ between compact complex manifolds, with $\alpha$ a relatively K\"ahler class on $X$ and $\beta$ a K\"ahler class on $B$. The question that motivates the present work is: what does it mean for $\omega_X \in \alpha$ to be a \emph{canonical} relatively K\"ahler metric? This question has well-known answers in the following situations:
\begin{enumerate}
\item $B$ is a point, so that $(X,\alpha)$ is a compact K\"ahler manifold. In this case, a \emph{constant scalar curvature K\"ahler} (cscK)   metric $\omega_X \in \alpha$ is the natural choice of canonical metric. K\"ahler-Einstein metrics are a special case.
\item All fibres $X_b$ for $b\in B$ have discrete automorphism group. In this case, on each fibre a cscK metric is unique, so a natural choice is a form $\omega_X \in \alpha$ which restricts to the cscK metric on each fibre $X_b$.
\item $X=\pr(E)$ and $\alpha = c_1(\scO_{\pr(E)}(1))$. Then a Hermite-Einstein metric $h$ on $E$ induces a  form $\omega_X \in \alpha$  which restricts to a Fubini-Study metric on each fibre, which is therefore also cscK on each fibre. Then this $\omega_X$ is a natural choice of relatively K\"ahler metric.
\end{enumerate}

While one has natural answers in the above situations, they are rather sparse examples. Most submersions of interest in higher dimensions are certainly not of the above form.

In each of the above cases, one obtains a uniqueness statement: the canonical choice of metric is actually unique up a natural class of automorphisms. In the first, uniqueness is up to the action of $\Aut(X)$ \cite{donaldson-scalar-curvature, berman-berndtsson}. In the second, one obtains uniqueness up to pullback of a form from $B$. In the third, one has uniqueness up to the action of the global endomorphisms of $E$ \cite{donaldson-hermite-einstein}. These uniqueness statements, which are the best possible, are really what is meant by having a canonical metric. One also sees from the projective bundle situation that, in order to understand the geometry of the submersion, it is natural to fix a K\"ahler metric on the base $B$, and so we fix one throughout.

In previous work, we introduced a candidate answer for our motivating question, in the form of an \emph{optimal symplectic connection} \cite{morefibrations}. This is a relatively K\"ahler metric $\omega_X \in \alpha$, which restricts to a cscK metric on each fibre, and satisfies an additional geometric partial differential equation described explicitly in Section \ref{sec:optimal}. Briefly, the equation is a fully-nonlinear second-order elliptic PDE on a vector bundle parametrising the fibrewise holomorphic vector fields; the PDE involves the symplectic curvature of the form $\omega_X$ together with a relative version of the Ricci curvature. The condition generalises the Hermite-Einstein equation when $X=\pr(E)$, and as we show in Section \ref{sec:optimal}, also simplifies to a generalised Hermite-Einstein type equation when all fibres $X_b$ are K\"ahler-Einstein Fano manifolds. 

Note that when the fibres admit continuous automorphisms, there is an infinite dimensional family of relatively K\"ahler metrics which are cscK on each fibre. We conjectured that solutions of the optimal symplectic connection equation are \emph{unique}, meaning that optimal symplectic connections do give a \emph{canonical} choice of $\omega_X \in \alpha$ when they exist \cite[Conjecture 1.2]{morefibrations}. Here we prove that conjecture.

\begin{theorem}\label{intro-uniqueness} Suppose $\omega_X, \omega_X' \in \alpha$ are two optimal symplectic connections. Then there is a $g \in \Aut_0(\pi)$ and a function $\phi_B \in C^{\infty}(B,\R)$ such that $$\omega_X = g^*\omega_X' + \pi^*(\ddb \phi_B).$$
\end{theorem}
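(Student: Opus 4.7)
The natural strategy is to adapt the Berman--Berndtsson proof of uniqueness of cscK metrics, which also specialises to Donaldson's proof of uniqueness of Hermite--Einstein metrics, to the present fibration setting, with the log norm functional $\scM$ announced in the abstract playing the role of the Mabuchi K-energy / Donaldson functional. Let $\scH$ denote the space of relatively K\"ahler potentials $\phi \in C^\infty(X,\R)$, taken modulo $\pi^* C^\infty(B,\R)$, for which $\omega_\phi := \omega_X + \ddb\phi$ restricts to a cscK metric on each fibre. The ambiguity in fibrewise cscK metrics coming from the fibre automorphism groups ensures $\scH$ is genuinely infinite-dimensional. By construction, critical points of $\scM$ on $\scH$ are optimal symplectic connections, and combined with the absolute minimization statement from the abstract, any two optimal symplectic connections in $\alpha$ realise the same minimal value of $\scM$.

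The analytic heart of the argument is to establish convexity of $\scM$ along geodesics in $\scH$. I would construct a $C^{1,\bar 1}$ weak geodesic $\phi_t$ between the potentials corresponding to $\omega_X$ and $\omega_X'$ by solving a degenerate complex Monge--Amp\`ere equation on $X \times [0,1] \times S^1$, in families over $B$, following Chen's strategy. Convexity of $\scM$ along such geodesics would then be proved by a second variation computation, expressing it as a sum of two non-negative contributions: a fibrewise Mabuchi-type term controlling variations of the fibrewise cscK condition, and a Donaldson / Hermite--Einstein-type term controlling the symplectic curvature piece of the equation. Since both endpoints of $\phi_t$ are minimisers of $\scM$, the functional is constant and hence affine along this geodesic.

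The final step is to identify an affine geodesic with an orbit of $\Aut_0(\pi)$. Using the reductivity of $\Aut_0(\pi)$, which is the other main result of the paper, the vanishing of the second variation forces the infinitesimal generator at $t=0$ to be the real part of a holomorphic vector field preserving the fibration. Its flow $g_t$ produces a smooth geodesic in $\scH$ with the same endpoints as $\phi_t$, and uniqueness of weak solutions of the geodesic Monge--Amp\`ere equation identifies the two; evaluating at $t=1$ yields the desired $g \in \Aut_0(\pi)$. The pullback term $\pi^*(\ddb \phi_B)$ in the statement simply reflects that $\scH$ was taken modulo $\pi^* C^\infty(B,\R)$.

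The principal obstacle I anticipate is the analytic step: establishing $C^{1,\bar 1}$ regularity of weak geodesics in the fibred space $\scH$ and proving convexity of $\scM$ along them requires a relative version of Chen's a priori estimates performed in families over $B$, together with a delicate second variation calculation that must disentangle the fibrewise cscK contribution from the transverse Hermite--Einstein-type contribution of the optimal symplectic connection equation while controlling cross terms. Extracting a genuine holomorphic vector field from the triviality of the second variation, rather than merely a direction of zero second variation, is the other essential technical input and is where reductivity of $\Aut_0(\pi)$ enters in an essential way.
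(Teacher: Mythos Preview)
Your proposal follows a genuinely different route from the paper. You attempt a direct Berman--Berndtsson style argument on the space $\scH$ of relatively cscK potentials: build geodesics, prove convexity of the log norm functional $\scN$ along them, and extract an automorphism from the triviality of the second variation. The paper instead \emph{perturbs to another infinite-dimensional problem where uniqueness is already known}: from each optimal symplectic connection it constructs, via an adiabatic limit, a twisted extremal metric $\tilde\omega_k\in k\beta+\alpha$ on $X$ with twist $\pi^*\xi$ (Theorem~\ref{intro-adiabatic}); applies the known uniqueness of twisted extremal metrics to get $\tilde\omega_k=(h_k g_k)^*\tilde\omega_k'$; uses precise $C^2$ estimates comparing $\tilde\omega_k$ with $k\omega_B+\omega_X+\ddb\phi_{B,2}$ to deduce that $\omega_X$ lies in the $\Aut_0(\pi)$-orbit \emph{closure} of $\omega_X'+\ddb\phi_B$; and finally invokes the Darvas--Rubinstein machinery for the $d_1$-metric to pass from orbit closure to orbit. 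No geodesics in $\scH$ and no convexity of $\scN$ are used for uniqueness.

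Two points deserve emphasis. First, there is a dependency problem in your outline: you invoke the absolute minimisation statement for $\scN$ (Theorem~\ref{thm:functional}) as input, but in the paper that result is itself \emph{proved via} the adiabatic construction of twisted extremal metrics and the Berman--Berndtsson lower bound for the twisted Mabuchi functional on $X$, and moreover is only established on $K$-invariant potentials. So a self-contained direct proof along your lines would need an independent proof of minimisation, which is not available. Second, the analytic step you flag is a genuine gap rather than a technicality: you have not specified the Riemannian structure on $\scH$ whose geodesics you want, and ``solving a degenerate Monge--Amp\`ere equation in families over $B$'' does not obviously produce a path that stays in the relatively cscK locus with the regularity needed for the second variation argument. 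The paper's remark after Theorem~\ref{thm:energy} indicates the authors regard geodesics in $\scH$ as not yet understood. The perturbation approach buys exactly the ability to sidestep these issues by transferring them to the well-developed theory of twisted extremal metrics on the total space.
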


\noindent Here $\Aut_0(\pi)$ denotes biholomorphisms of $X$ preserving $\pi$. This is the best possible uniqueness result, and implies that optimal symplectic connections do give a \emph{canonical} choice of relatively K\"ahler metric on submersions, when they exist. The result generalises and recovers Donaldson's uniqueness of Hermite-Einstein metrics \cite{donaldson-hermite-einstein}, with a completely different method.

Analytic objects, arising from differential geometry, that one can \emph{uniquely} associate to holomorphic objects are often extremely useful: a typical example is the use of constant curvature metrics in the study of the moduli space of compact Riemann surfaces, which is essentially Teichm\"uller theory. Uniqueness statements also  play a crucial role in the analytic approach to forming moduli of polarised \emph{manifolds} admitting canonical metrics in higher dimensions \cite{moduli, inoue}, and so it is natural to expect our uniqueness result to be useful in forming moduli of \emph{submersions} over a fixed base (compare also the use of Hermite-Einstein metrics in the study of moduli of holomorphic vector bundles). 

We also prove the following results that demonstrate how optimal symplectic connections reflect the geometry of submersions.

\begin{theorem}\label{intro:matsushima}
Suppose $\pi: (X,\alpha) \to (B,\beta)$ admits an optimal symplectic connection $\omega_X$. Then
\begin{enumerate}
\item the Lie algebra of holomorphic vector fields preserving $\pi$ and vanishing somewhere is reductive;
\item the isometry group $\Isom_0(\pi,\omega_X)$ is a maximal compact subgroup of $\Aut_0(\pi)$.
\end{enumerate}
\end{theorem}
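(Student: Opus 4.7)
The plan is to prove a relative Matsushima--Lichnerowicz theorem: construct a real, formally self-adjoint fourth-order elliptic operator on functions on $X$ whose reality is forced by the OSC equation. Part (i) then follows by the classical decomposition into real and imaginary parts of the potential for a holomorphic vector field, and part (ii) from (i) by standard Lie-theoretic generalities.

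For (i), fix $\omega_B \in \beta$ and note that $\omega_X + k\pi^*\omega_B$ is a genuine K\"ahler form on $X$ for $k \gg 0$. Given a holomorphic vector field $V$ preserving $\pi$ and vanishing somewhere, write $V = \nabla^{1,0}f$ with respect to this K\"ahler metric, with $f = f_1 + if_2$ complex-valued. One aims to show that $\nabla^{1,0}f_j$ is itself holomorphic and preserves $\pi$ for $j = 1, 2$, yielding the decomposition $\mathfrak{h} = \mathfrak{k} \oplus i\mathfrak{k}$ with $\mathfrak{k}$ the Lie algebra of $\Isom_0(\pi, \omega_X)$, which gives reductivity of the Lie algebra $\mathfrak{h}$ of holomorphic vector fields preserving $\pi$ and vanishing somewhere.

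The standard identity expresses $\bar\partial V = \mathcal{D}^*\mathcal{D}f$ for a fourth-order Lichnerowicz-type operator, real precisely when the scalar curvature of the ambient K\"ahler metric is constant. Here the total form is not cscK, but $\omega_X$ restricts to a cscK metric on each fibre and satisfies the OSC equation. I would expand $\mathcal{D}^*\mathcal{D}$ in powers of $1/k$: the leading, purely fibrewise, term vanishes on $f$ by the classical Matsushima argument applied to the vertical component $V^v$ of $V$, which is fibrewise holomorphic because $V$ preserves $\pi$. The subleading term is a coupled operator combining horizontal derivatives of $f$, fibre Ricci contributions, and the symplectic curvature of the connection induced by $\omega_X$. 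The OSC equation is precisely what renders this subleading operator real, so that $\mathcal{D}^*\mathcal{D}f = 0$ also gives $\mathcal{D}^*\mathcal{D}\bar f = 0$. Once this is in hand, $\nabla^{1,0}f_j$ is holomorphic and preserves $\pi$ for $j = 1,2$, completing (i). For (ii), the decomposition $\mathfrak{h} = \mathfrak{k} \oplus i\mathfrak{k}$ presents $\mathfrak{k}$ as a compact real form of the reductive complex Lie algebra $\mathfrak{h}$; since in a reductive complex Lie algebra compact real forms and maximal compact subalgebras coincide and are unique up to conjugacy, exponentiation gives that $\Isom_0(\pi, \omega_X)$ is maximal compact in $\Aut_0(\pi)$.

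The main obstacle is the $1/k$-expansion: the classical Matsushima argument integrates a single real scalar identity, while here one must track how vertical and horizontal derivatives mix, and verify that the OSC contraction is exactly the obstruction to reality of the subleading operator (rather than some strictly stronger coupled condition). Careful bookkeeping of symplectic connection curvature terms together with the fibre Laplacian expansion, and ensuring that the analysis descends consistently to the orthogonal complement of pullbacks from $B$, is the technical heart of the argument.
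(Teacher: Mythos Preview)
For part (i) your strategy coincides with the paper's. The subleading term in the $1/k$-expansion of the Lichnerowicz operator, restricted to fibrewise holomorphy potentials, is precisely the operator $p \circ \mathcal{L}_1$ of Proposition~\ref{rstar-right-invertibility}; the paper's proof of Theorem~\ref{fibration-matsushima} simply cites from \cite{morefibrations} that this operator is real at an optimal symplectic connection and then splits kernel elements into real and imaginary parts, exactly as you propose. (One small correction: the relevant gradient is the \emph{vertical} gradient $\nabla^{1,0}_{\mathcal{V}}$, since vector fields preserving $\pi$ are vertical; by Lemma~\ref{fibreholomorphypotential} their potential with respect to $k\omega_B+\omega_X$ is independent of $k$.)

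For part (ii) the paper takes a genuinely different route. Rather than deducing maximality from the Lie-algebra splitting, it invokes the adiabatic construction of Section~\ref{sec:adiabatic}: Theorem~\ref{thm:solutions} produces twisted extremal metrics $\tilde\omega_k \in k\beta + \alpha$, whose isometry groups are maximal compact in $\Aut_0(\pi)$ by Theorem~\ref{thm:automorphisms-of-maps}; the $C^2$-estimate $|\tilde\omega_k - (k\omega_B + \omega_X + i\partial\bar\partial\psi_{B,2})| \leq Ck^{-1}$ then forces this maximal compact subgroup $K$ to preserve $\omega_X$ itself, so that $K = \Isom_0(\pi, \omega_X)$. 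Your direct Lie-theoretic argument is the natural analogue of the classical Matsushima--Calabi proof and avoids the adiabatic machinery entirely; it should go through, but note that $\mathfrak{h}_\pi$ only captures vector fields vanishing somewhere, while $\Lie\Aut_0(\pi)$ may also contain parallel vector fields $\mathfrak{a}_\pi$, so you would need the fuller decomposition of Corollary~\ref{use-for-darvas-rubinstein} before passing to groups. The paper's detour through twisted extremal metrics is not wasted, since that construction is in any case the engine driving the uniqueness theorem and the log-norm functional bound.
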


\noindent These are analogues of foundational results for cscK metrics due to Matsushima, Lichnerowicz and Calabi. More precise statements can be found in Section \ref{sec:energy}.

Our final main result concerns a natural log norm functional $\scN$, a real-valued functional defined on the space of relatively K\"ahler metrics which are cscK on each fibre. When $\pi: X \to B$ admits automorphisms, we restrict to relatively K\"ahler metrics invariant under a maximal compact subgroup of $\Aut_0(\pi)$; if $\pi: (X, \alpha) \to (B, \beta)$ admits an optimal symplectic connection $\omega_X$, we take this maximal compact subgroup to be $\Isom_0(\pi,\omega_X)$. Our functional $\scN$ is the analogue of the Mabuchi functional, which detects the existence of cscK metrics, and the Donaldson functional, which determines the existence of Hermite-Einstein metrics.

\begin{theorem}\label{thm:functional} Suppose $\pi: (X, \alpha) \to (B, \beta)$ admits an optimal symplectic connection $\omega_X$. Then the absolute minimum of $\scN$ is achieved by $\scN(\omega_X),$ and hence $\scN$ is bounded below. \end{theorem}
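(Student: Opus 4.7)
The functional $\scN$ is modelled on the Mabuchi K-energy and the Donaldson functional, and accordingly the plan follows the classical philosophy: establish that (i) optimal symplectic connections are critical points of $\scN$, and (ii) $\scN$ is convex along a suitable class of geodesic-like paths in the space of relatively K\"ahler metrics that are fibrewise cscK and invariant under the chosen maximal compact subgroup. Together these two ingredients force any such critical point, and in particular any optimal symplectic connection, to be a global minimum of $\scN$.

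For (i), I would compute the first variation of $\scN$ directly from its defining formula and identify the resulting gradient with the operator whose vanishing defines the optimal symplectic connection equation. Since $\scN$ is tailored so that its Euler--Lagrange equation is exactly the equation of an optimal symplectic connection (by analogy with how critical points of the Mabuchi functional are cscK metrics, and critical points of the Donaldson functional are Hermite--Einstein), this is essentially a verification. The moment-map picture established in \cite{morefibrations} provides a conceptual explanation: $\scN$ plays the role of the Kempf--Ness / log norm functional associated to the gauge-theoretic reformulation of the equation, and its critical points are the zeros of the corresponding moment map.

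The substantive work is (ii). Given the optimal symplectic connection $\omega_X$ and a competitor $\omega_X' = \omega_X + \ddb \phi$ in the same relative class, I would construct a weak geodesic $\omega_t = \omega_X + \ddb \phi_t$ joining them. The natural model is a Berman--Berndtsson-style subgeodesic: solve the degenerate complex Monge--Amp\`ere equation on $X \times \Sigma$, with $\Sigma$ an annulus parametrising $t$, with boundary values given by the two potentials, and invoke fibrewise Chen-type $C^{1,1}$ regularity. Convexity of $t \mapsto \scN(\omega_t)$ should then be established by adapting the subharmonicity arguments of Berman--Berndtsson to each of the pieces of $\scN$: the fibrewise Mabuchi contribution, an entropy term, and a Donaldson-type contribution arising from the symplectic curvature of the horizontal distribution. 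Once convexity along such paths is in place, the vanishing of the first derivative of $\scN$ at $t = 0$, guaranteed by (i) applied to $\omega_X$, forces $\scN(\omega_X) \le \scN(\omega_X')$, which is the claimed absolute minimum.

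The main obstacle is undoubtedly step (ii): establishing enough regularity of the weak geodesic $\omega_t$ and rigorously justifying convexity of $\scN$ along it. The fibration structure introduces a delicate interaction between the fibrewise pluripotential theory and the horizontal symplectic connection terms, and one has to verify that the curvature identities underlying Berman--Berndtsson-type subharmonicity extend to the symplectic curvature contributions in $\scN$ and that no positivity is lost when passing from smooth geodesics to weak ones.
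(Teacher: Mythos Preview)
Your proposal outlines a direct convexity approach, but the paper takes a genuinely different route: it never develops any pluripotential or geodesic theory on the space $\scK_E$ of relatively cscK metrics. Instead it perturbs to an auxiliary problem where that machinery already exists. Concretely, the paper uses the twisted extremal metrics $\tilde\omega_k \in k\beta + \alpha$ produced in Section \ref{sec:adiabatic} and proves an asymptotic expansion (Lemma \ref{lem:energyasymptotic})
\[
\scM_{\xi,k}(\phi) - \scM_{\xi,k}(\psi) = \binom{m+n}{n} k^{n-1}\bigl(\scN(\phi) - \scN(\psi)\bigr) + O(k^{n-2}),
\]
where $\scM_{\xi,k}$ is the modified twisted Mabuchi functional on the full space of $K$-invariant K\"ahler potentials. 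Berman--Berndtsson's convexity results apply directly to $\scM_{\xi,k}$ and show it is minimised at $\tilde\omega_k$; the expansion then forces $\scN(\phi) \geq \scN(\psi)$ by letting $k \to \infty$. The advantage is that all the hard analysis (weak geodesics, subharmonicity, regularity) is borrowed wholesale from the cscK/twisted cscK setting rather than redeveloped for fibrations.

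Your approach has a concrete gap that would prevent it from going through as written. The functional $\scN$ is only defined on $\scK_E$, the space of potentials $\phi$ for which $\omega_X + \ddb\phi$ is fibrewise cscK. But the Monge--Amp\`ere geodesic you propose to construct on $X \times \Sigma$ between two relatively cscK metrics has no reason to remain relatively cscK for intermediate $t$: the fibrewise cscK condition is a fourth-order constraint, not preserved by the degenerate Monge--Amp\`ere equation. The paper's Lemma \ref{lem:dotphi} shows that paths in $\scK_E$ have $\dot\phi_t$ equal to a fibrewise holomorphy potential, so the tangent space to $\scK_E$ is highly constrained; the correct geodesic equation would be an equation on the bundle $E$ of fibrewise holomorphy potentials, and the corresponding weak geodesic theory has not been developed. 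The paper's own remark after the proof of Theorem \ref{thm:energy} confirms that understanding geodesics in $\scK_E$ is open and is precisely what would be needed to upgrade their result (e.g.\ to remove the $K$-invariance hypothesis). Your step (i) is correct in spirit, but without a workable replacement for step (ii) the argument does not close.
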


\noindent We conjecture that the existence of an optimal symplectic connection is equivalent to a stronger notion of \emph{properness} of the functional $\scN$ on the space of relatively K\"ahler metrics which are cscK on each fibre.

Focusing for the moment on our uniqueness results, there are two strategies to proving uniqueness of classes of metrics in K\"ahler geometry. The first relies on \emph{convexity properties} for infinite dimensional log norm functionals; this was Donaldson's approach to proving uniqueness of Hermite-Einstein metrics \cite{donaldson-hermite-einstein}, and is the approach used by Berman-Berndtsson to establish uniqueness of cscK metrics \cite{berman-berndtsson} (following Donaldson's programme \cite{donaldson-uniqueness-programme}). The second, which only applies when the K\"ahler manifolds are projective, is to perturb to an easier \emph{finite dimensional} problem; this approach was used by Donaldson to prove uniqueness of cscK metrics \cite{donaldson-scalar-curvature}.

We use a different approach that blends the two ideas, by perturbing to another infinite dimensional problem where uniqueness is already known. In prior work, we explained how to construct extremal metrics on the total space of submersions using a canonical twisted extremal metric on $B$ and optimal symplectic connections \cite{morefibrations}. Such twisted extremal metrics do not always exist on $B$, and moreover morally one should not use canonical objects \emph{on} $B$ to study the geometry of submersions \emph{over} $B$; the metric chosen on $B$ should, in some sense, be irrelevant. 

Here, given \emph{any} form $\omega_B$ on $B$, we use that $\omega_B$ can always be seen as a twisted cscK metric for an appropriate twist to constuct \emph{twisted} extremal metrics on $X$ itself, in the class $k\beta + \alpha$. This requires developing some novel techniques in the study of adiabatic limit problems, which in K\"ahler geometry originated with work of Hong and Fine \cite{hong, fine1}. In particular, we show that the twisted extremal metrics we produce are sufficiently well approximated by the approximate solutions we construct that one can pass from statements about twisted extremal metrics, to statements about each term in our approximate solution. Thus in some sense, we perturb from uniqueness of twisted extremal metrics to uniqueness of optimal symplectic connections. The precise statement is the following.

\begin{theorem}\label{intro-adiabatic} Suppose $\omega_X$ is an optimal symplectic connection. Then there is a K\"ahler metric $\xi$ on $B$ such that $k\beta+\alpha$ admits a twisted extremal metric with twist $\pi^*\xi$ for all $k \gg 0$.
 \end{theorem}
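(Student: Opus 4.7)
The plan is an adiabatic limit construction, starting from the optimal symplectic connection $\omega_X$ together with an \emph{arbitrary} K\"ahler metric $\omega_B \in \beta$. The crucial preliminary observation is that any such $\omega_B$ represents a twisted cscK metric for an appropriate closed $(1,1)$-form $\xi$: the equation $S(\omega_B) - \Lambda_{\omega_B}\xi = \hat{S}_B$ can be solved by perturbing a fixed reference twist by $\ddb f$ for a function $f$ satisfying a single Laplace equation on $B$. This is what lets us dispense with a canonical metric on the base, in contrast with prior work (including the authors' previous construction of extremal metrics) where a canonical twisted extremal metric on $B$ was required. The one-parameter family to be perturbed is $\omega_k = k\pi^*\omega_B + \omega_X \in k\beta + \alpha$, and for $k \gg 0$ I would produce the twisted extremal metric as a suitable $\ddb$-perturbation of $\omega_k$, with twist $\pi^*\xi$.

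Next I would build an approximate twisted extremal metric to arbitrarily high order in $k^{-1}$. Expanding the twisted scalar curvature $S(\omega_k + \ddb \varphi_k) - \Lambda_{\omega_k + \ddb \varphi_k}\pi^*\xi$ in powers of $k^{-1}$, the leading term vanishes by the fibrewise cscK hypothesis, and the next term splits into a base-direction contribution, killed by the twisted cscK property of $\omega_B$, and a fibre-direction contribution controlled by the symplectic curvature of $\omega_X$ together with the relative Ricci-type term appearing in the optimal symplectic connection equation. The PDE of Section \ref{sec:optimal} is precisely the condition that the obstruction at this order, which after projection onto the fibrewise kernel of the Lichnerowicz operator becomes the obstruction to solving for the next correction, vanishes. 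One then iteratively corrects by adding terms $k^{-j}\ddb\phi_j$ to kill successive orders; at each step the linearization decomposes, via the projection onto the bundle of fibrewise holomorphic potentials, into a fibrewise Lichnerowicz operator (invertible on the orthogonal complement of its kernel) and a base-level Lichnerowicz-type operator coupled to the relative geometry. One also allows $\xi$ to be perturbed order-by-order by pulled-back potentials, in order to absorb any obstruction produced by horizontal holomorphic vector fields of the submersion.

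The final step is a quantitative implicit function theorem applied to this approximate solution. The principal obstacle, as is standard in adiabatic limit constructions going back to Hong and Fine, is the quantitative inversion of the linearized twisted extremal operator with $k$-dependent estimates in suitably weighted H\"older norms: the operator degenerates in the fibre direction as $k \to \infty$, so one must balance fibre and base weights carefully and exhibit a right inverse whose norm grows only polynomially in $k$. Since the approximate solution has error $O(k^{-N})$ for $N$ as large as we please, this estimate suffices to close a contraction-mapping argument and produces a genuine twisted extremal metric with twist $\pi^*\xi$ for all $k \gg 0$. The novelty over the Hong--Fine setting is that the base is not assumed cscK; this is compensated both by the use of the twist $\pi^*\xi$ and by retaining the $\xi$-freedom through the iteration, which together give enough cokernel-clearing room to close the scheme.
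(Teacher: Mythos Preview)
Your overall adiabatic-limit architecture is correct, but there is a genuine gap in how you propose to handle the cokernel, and two related technical points that need correcting.

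First, the twist $\xi$ is \emph{not} perturbed through the iteration; it is fixed once and for all. Your suggestion that one ``allows $\xi$ to be perturbed order-by-order \ldots\ to absorb any obstruction produced by horizontal holomorphic vector fields'' misidentifies both the source of the obstruction and the cure. With $\xi$ chosen K\"ahler, the twist on the base is the \emph{positive} form $\zeta+\xi$ (here $\zeta$ is the Weil--Petersson form, which enters the $k^{-1}$ expansion automatically and which you have omitted). Positivity forces the twisted Lichnerowicz operator on $B$ to have kernel equal to the constants, so there is \emph{no} horizontal obstruction to absorb. The actual cokernel arises at the $C^{\infty}_E(X)$ step: the operator $p\circ\scL_1$ has kernel precisely the fibre holomorphy potentials inducing \emph{global} holomorphic vector fields preserving $\pi$, i.e.\ elements of $\mathfrak{h}_\pi$, and these are \emph{vertical}. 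Perturbing $\xi$ by base potentials contributes only to $C^{\infty}(B)$ at leading order and cannot cancel a $C^{\infty}_E(X)$ obstruction. The paper instead absorbs this cokernel by introducing, at each order, a fibre holomorphy potential $h_j$ for a vector field in $\mathfrak{h}_\pi$; this is exactly what makes the limiting metric twisted \emph{extremal} rather than twisted cscK. You never explain where the extremal vector field comes from, and your proposed substitute mechanism would not close the scheme.

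Second, your Laplace-equation argument produces a closed $(1,1)$-form $\xi$ with $S(\omega_B)-\Lambda_{\omega_B}\xi$ constant, but not a \emph{K\"ahler} one, and you have the wrong equation: the $k^{-1}$ term of the scalar-curvature expansion is $S(\omega_B)-\Lambda_{\omega_B}\zeta$, so the correct condition is $S(\omega_B)-\Lambda_{\omega_B}(\zeta+\xi)=\mathrm{const}$. Positivity of $\xi$ (hence of $\zeta+\xi$) is essential for the base-level invertibility just described; it is obtained by Hashimoto's trick of working in $j\beta$ for $j\gg 0$, not by a bare Laplace equation.
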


Twisted extremal metrics are best viewed as canonical metrics on \emph{maps} between complex manifolds \cite{fibrations, stablemaps}; in our case, the map is $\pi: X \to B$. Uniqueness statements for twisted extremal metrics are proved in \cite{keller, berman-berndtsson,fibrations}, and when $\Aut_0(\pi)$ is the identity, combined with Theorem \ref{intro-adiabatic} and our new adiabatic limit techniques are enough to prove uniqueness of optimal symplectic connections. In the case $\Aut_0(\pi)$ is non-trivial, we have to work harder, and employ techniques concerning the action of the automorphism group on the space of K\"ahler potentials developed in the important work of Darvas-Rubinstein \cite{darvas-rubinstein}.

Theorem \ref{intro-uniqueness} proves uniqueness of optimal symplectic connections, but not existence. We conjectured in \cite{morefibrations, stablefibrations} that existence is equivalent to a notion of \emph{stability} of algebro-geometric fibrations. This also motivates our conjecture that one should be able to form a moduli space of submersions over a fixed base $B$ which admit an optimal symplectic connection; as mentioned above, uniqueness results are crucial in the analytic approach to such questions. We also remark that our algebro-geometric conjecture would imply that the \emph{existence} of solutions is actually independent of $\omega_B$ chosen; as one sees from the Hermite-Einstein situation, this has no relevance for uniqueness questions, as one obtains uniqueness of $\omega_X$ after choosing $\omega_B$. 

Finally, we remark that Theorem \ref{intro-adiabatic} provides the following purely algebro-geometric statement:

\begin{corollary} 

Suppose $E$ is a stable vector bundle over $(B, L)$, where $L$ is ample on $B$. Then for all $k \gg j \gg 0$, the map $$\pi: (\pr(E), kL+ \scO_{\pr(E)}(1)) \to (B,j L)$$ is a K-semistable map.

\end{corollary}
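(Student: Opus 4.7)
The plan is to compose three steps: Donaldson--Uhlenbeck--Yau, Theorem \ref{intro-adiabatic}, and the standard implication that a twisted extremal metric on a polarised map yields K-semistability of that map. Since $E$ is stable over $(B,L)$, by the Donaldson--Uhlenbeck--Yau theorem it carries a Hermite--Einstein metric, and as noted in the introduction, the induced fibrewise Fubini--Study form on $\pr(E)$ is an optimal symplectic connection in the class $\alpha := c_1(\scO_{\pr(E)}(1))$ relative to $\beta := c_1(L)$ on $B$.

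Applying Theorem \ref{intro-adiabatic} to this optimal symplectic connection produces a K\"ahler form $\xi$ on $B$ such that $k\beta + \alpha$ admits a twisted extremal metric with twist $\pi^*\xi$ for all $k \gg 0$. I would then invoke the twisted analogue of the principle ``cscK implies K-semistable'': existence of a twisted extremal metric on $(\pr(E), k\beta+\alpha)$ with twist $\pi^*\xi$ witnesses K-semistability of the polarised map $\pi : (\pr(E), k\beta+\alpha) \to (B,[\xi])$, as developed for maps in the references cited in the introduction. The argument follows Donaldson's classical one, with the twist contribution absorbed into the Donaldson--Futaki invariant for maps and handled by a standard integration by parts along test configurations of the map.

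The principal obstacle I expect is arranging the target polarisation $[\xi]$ to be a positive multiple of $c_1(L)$, as demanded by the corollary's statement with base class $jL$. The construction underlying Theorem \ref{intro-adiabatic} starts by choosing a K\"ahler form $\omega_B$ on $B$ and realising it as twisted cscK via a trace relation of the shape $\tr_{\omega_B}\xi = S(\omega_B) - c$; this affords enough freedom to place $\omega_B \in j c_1(L)$ and to arrange $\xi$ itself in a positive rational multiple of $c_1(L)$ once $j$ is sufficiently large for positivity. The hierarchy $k \gg j \gg 0$ then mirrors the adiabatic hierarchy in the proof: first fix $\omega_B$ in a suitably positive class on $B$, then run the adiabatic construction in $k$. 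A small openness argument for K-semistability of maps, applied to the resulting one-parameter family of base polarisations, handles any residual discrepancy in cohomology class and yields the corollary.
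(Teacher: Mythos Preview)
Your overall strategy matches the paper's: Donaldson--Uhlenbeck--Yau, then Theorem~\ref{intro-adiabatic}, then the implication from twisted metrics to K-semistability of the map. Two points need correction, however.

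First, the passage from a twisted \emph{extremal} metric to K-semistability of the map does not go through as stated. The cited results \cite{uniform, stablemaps} give K-semistability from a twisted \emph{cscK} metric; a genuinely extremal (non-constant) twisted metric only yields a relative notion. The fix is immediate but you must make it explicit: since $E$ is stable, $H^0(B,\End E)$ consists of scalars, which act trivially on $\pr(E)$, so $\Aut_0(\pi)$ is trivial and the twisted extremal metric produced by Theorem~\ref{intro-adiabatic} is automatically twisted cscK. This is exactly what the paper uses.

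Second, your ``principal obstacle'' is not one. The construction underlying Theorem~\ref{intro-adiabatic} goes through Proposition~\ref{hashimoto} (Hashimoto's result), which already places the twist $\xi$ in the class $j\beta = jc_1(L)$ for any sufficiently large $j$. Thus the hierarchy $k \gg j \gg 0$ is built into the argument: choose $j$ large so that Proposition~\ref{hashimoto} applies, then choose $k$ large so that the adiabatic construction runs. No openness argument for K-semistability is required, which is fortunate, since K-semistability is not known to be open and your proposed patch would be a genuine gap.
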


\noindent Here K-semistability of maps is meant in the sense of \cite{stablemaps}. This provides a strong, and perhaps surprising, link between stability of bundles and K-stability of maps. The above follows from Theorem \ref{intro-adiabatic} by using that the existence of a twisted cscK metric implies K-semistability of the map $\pi$ \cite{uniform, stablemaps}.

\vspace{4mm} \noindent {\bf Outline:} Section \ref{sec:prelims} contains material on cscK metrics, optimal symplectic connections and twisted extremal metrics that will be essential later. The only new material is a simplification of the optimal symplectic connection condition for Fano submersions, in Proposition \ref{fano-simplification}. Section \ref{sec:adiabatic} proves Theorem \ref{intro-adiabatic}, together with some crucial estimates bounding the twisted extremal and approximately twisted extremal metrics constructed there. We prove our main results, Theorems \ref{intro-uniqueness} \ref{intro:matsushima} and \ref{thm:functional}, in Section \ref{sec:uniqueness}.

\vspace{4mm} \noindent {\bf Acknowledgements:} We thank the referee for their helpful comments and corrections. LMS's postdoctoral position is supported by Villum Fonden, grant 0019098.

\section{Preliminaries}\label{sec:prelims}
\subsection{Constant scalar curvature K\"ahler metrics}

Let $X$ be an $n$-dimensional compact K\"ahler manifold and let $\alpha$ be a K\"ahler class on $X$. The \emph{scalar curvature} of $X$ is the contraction $$S(\omega) = \Lambda_{\omega} \Ric\omega$$ of the \emph{Ricci curvature} $$\Ric\omega = -i\ddbar \log \omega^n.$$ Thus $$S(\omega)\omega^n = n \Ric\omega \wedge \omega^{n-1}.$$

\begin{definition}

We say that $\omega$ is a \emph{constant scalar curvature K\"ahler metric} (cscK) if $S(\omega)$ is constant.

\end{definition}

CscK metrics give a canonical choice of K\"ahler metric, when they exist, in the following sense. Denote by $\Aut_0(X)$ the connected component of the identity inside the group of biholomorphisms $\Aut(X)$ of $X$.

\begin{theorem}\label{csck-uniqueness}\cite{donaldson-scalar-curvature, berman-berndtsson} Suppose $\omega, \omega' \in \alpha$ are cscK metrics. Then there is a $g \in \Aut_0(X)$ with $g^*\omega = \omega'$. \end{theorem}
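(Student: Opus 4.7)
The plan is to follow the variational strategy initiated by Donaldson and completed by Berman-Berndtsson, centered on the Mabuchi K-energy functional and its convexity along weak geodesics in the space of K\"ahler potentials. Fix a reference K\"ahler metric $\omega_{\mathrm{ref}} \in \alpha$, and let $\H = \{\phi \in C^{\infty}(X, \R) : \omega_\phi := \omega_{\mathrm{ref}} + \ddb \phi > 0\}$; by the $\partial\bar\partial$-lemma every K\"ahler metric in $\alpha$ is $\omega_\phi$ for a potential $\phi \in \H$ that is unique modulo an additive constant. The Mabuchi functional $\M : \H \to \R$ is characterized up to a constant by
$$\delta \M(\phi)(\psi) = -\int_X \psi \left(S(\omega_\phi) - \bar{S}\right) \omega_\phi^n,$$
where $\bar{S}$ denotes the (topological) mean scalar curvature; its critical points are precisely the cscK representatives of $\alpha$.

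The space $\H$ carries Mabuchi's formal Riemannian metric, whose geodesic equation is a homogeneous complex Monge-Amp\`ere equation on $X \times A$ for an annulus $A \subset \C$. Smooth geodesics generally do not exist, but Chen proved that any two $\phi_0, \phi_1 \in \H$ are joined by a unique $C^{1,1}$ geodesic $\{\phi_t\}_{t \in [0,1]}$. The key analytic input, established by Berman-Berndtsson via pluripotential theory (building on work of Chen, Tian, Chen-Tian, and Donaldson), is that $\M$ extends continuously to a functional on suitable $C^{1,1}$ potentials and is convex along such weak geodesics, with strict convexity failing only when the geodesic is induced by a real one-parameter subgroup of $\Aut_0(X)$.

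Now pick $\phi_0, \phi_1 \in \H$ with $\omega = \omega_{\phi_0}$ and $\omega' = \omega_{\phi_1}$; both are critical points of $\M$. Along the weak geodesic $\phi_t$ joining them, convexity of $t \mapsto \M(\phi_t)$ together with the vanishing of its derivatives at both endpoints (which, by the first-variation formula, coincide with the scalar curvature equation at $\omega_{\phi_0}$ and $\omega_{\phi_1}$) forces $\M(\phi_t)$ to be constant. The equality case of the Berman-Berndtsson rigidity then produces a real one-parameter subgroup $\{g_t\}_{t \in [0,1]} \subset \Aut_0(X)$ whose action on potentials recovers the geodesic; taking $g := g_1$ gives $g^*\omega = \omega'$ as required.

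The main obstacles are twofold. First, establishing $C^{1,1}$ regularity of the weak geodesic requires delicate a priori estimates for a degenerate complex Monge-Amp\`ere equation with boundary values, originally due to Chen. Second, and more substantially, one must promote convexity of $\M$ from the smooth to the $C^{1,1}$ setting --- which requires a pluripotential interpretation of scalar curvature and of Ricci-type currents --- and, crucially, extract an honest holomorphic vector field from the degenerate (constant $\M$) case. This last rigidity step is the heart of the Berman-Berndtsson argument and is by far the most technically demanding part of the proof.
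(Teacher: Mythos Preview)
The paper does not give its own proof of this theorem: it is quoted as a known result with citations to Donaldson and Berman--Berndtsson, and is used as background input for the rest of the article. So there is no ``paper's proof'' to compare against.

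Your sketch is a faithful high-level summary of the Berman--Berndtsson approach and identifies the right ingredients (Chen's $C^{1,1}$ geodesics, convexity of the K-energy along them, and the rigidity step producing a holomorphic vector field). One point worth tightening: for a $C^{1,1}$ geodesic the first-variation formula for $\M$ is not available in the na\"ive sense, so the claim that the one-sided derivatives at the endpoints vanish because the endpoints are cscK needs more care. Berman--Berndtsson handle this via a weak subslope inequality (their ``slope at endpoints'' argument) rather than by directly invoking the smooth first-variation formula; you allude to this difficulty at the end, but your middle paragraph states it as if it were immediate. Also, in the presence of automorphisms their actual route to uniqueness passes through a perturbation argument (solving a slightly twisted equation to break the symmetry, then taking a limit) rather than by directly extracting the one-parameter subgroup from the unperturbed geodesic, so the rigidity step is a bit more indirect than your summary suggests.
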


Thus there is a close relationship between cscK metrics and automorphisms. Going further, denote by $$\mfh \subset H^0(X,TX^{1,0})$$ the Lie algebra of holomorphic vector fields which vanish somewhere. Via the natural isomorphism $TX^{1,0} \cong TX $, holomorphic vector fields correspond to \emph{real holomorphic vector fields}: vector fields whose flows preserve the complex structure. Denote by $\mfk \subset \mfh$ the Lie subalgebra of vector fields which correspond to real holomorphic vector fields that are Killing. 

\begin{theorem}\label{reductivity}\cite[Theorem 3.5.1, Theorem 3.6.1]{gauduchon} Suppose $\omega \in \alpha$ is a cscK metric. Then
\begin{enumerate}
\item $\mfh = \mfk \oplus J\mfk$, so $\mfh$ is reductive;
\item $\Isom_0(X,\omega)$ is a maximal compact subgroup of $\Aut_0(X)$.
\end{enumerate}
\end{theorem}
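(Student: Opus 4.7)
The plan is to exploit the Lichnerowicz operator $\scD^*\scD$ on $C^\infty(X,\C)$, whose kernel is exactly the space of complex holomorphy potentials for vector fields in $\mfh$. On any K\"ahler manifold this fourth-order operator has the schematic form
\[
\scD^*\scD f = \tfrac{1}{2}\Delta_\omega^2 f + 2\langle \Ric\omega,\, i\ddbar f\rangle_\omega - \langle \bar\partial S(\omega),\, \bar\partial f\rangle_\omega,
\]
so when $S(\omega)$ is constant the zeroth-order term vanishes and $\scD^*\scD$ commutes with complex conjugation.

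For part (i), I would run the classical Matsushima--Lichnerowicz argument. If $f = u + iv$ with $u,v \in C^\infty(X, \R)$ is a holomorphy potential and $\omega$ is cscK, then by the observation above $u$ and $v$ individually lie in $\ker\scD^*\scD$. The standard dictionary identifies a real element $u\in\ker\scD^*\scD$ with the real holomorphic vector field $J\nabla_\omega u$, and this vector field is Killing precisely because $u$ is real. Hence the real holomorphic vector field associated to $f$ decomposes as $\xi_1 + J\xi_2$ with $\xi_1,\xi_2\in\mfk$, yielding $\mfh = \mfk\oplus J\mfk$. This exhibits $\mfh$ as the complexification of a compact real Lie algebra, hence reductive.

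For part (ii), I would work at the group level. The group $\Isom_0(X,\omega)$ is compact, as the identity component of the isometry group of a compact Riemannian manifold, and is contained in $\Aut_0(X)$ because elements of $\mfk$ are real holomorphic by definition. Its Lie algebra is $\mfk$, which by (i) is a compact real form of the complex reductive Lie algebra $\mfh$. Since maximal compact subgroups of a connected Lie group with reductive Lie algebra are conjugate and share a common real dimension equal to that of any compact real form, a dimension count gives $\dim_\R\Isom_0(X,\omega) = \dim_\R \mfk = \dim_\C \mfh = \dim_\R K_{\max}$ for any maximal compact $K_{\max}\subset\Aut_0(X)$, forcing $\Isom_0(X,\omega)$ itself to be maximal compact.

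The main delicate point is the real-vs-complex dictionary underlying (i): that $u\in\ker\scD^*\scD$ for $u$ real corresponds to $J\nabla_\omega u$ being Killing and real holomorphic. Without the cscK assumption, the Killing property is corrupted by a zeroth-order correction involving $\nabla S(\omega)$, so the cleanness of the decomposition truly depends on $S(\omega)$ being constant. Part (ii) is then largely Lie-theoretic once (i) is in hand, although one could alternatively derive it from Theorem~\ref{csck-uniqueness} by an averaging argument ruling out compact subgroups of $\Aut_0(X)$ strictly larger than $\Isom_0(X,\omega)$.
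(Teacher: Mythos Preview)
The paper does not prove this theorem; it is quoted as background from Gauduchon's book, so there is no proof in the paper to compare against. I will simply assess your argument on its merits.

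Your argument for (i) is the standard Matsushima--Lichnerowicz argument and is correct: the cscK hypothesis makes $\scD^*\scD$ a real operator, so the kernel splits into real and imaginary parts, and real holomorphy potentials correspond (via $J\nabla$) to Killing real holomorphic vector fields.

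Your argument for (ii) has a genuine gap. You identify $\Lie\Isom_0(X,\omega)$ with $\mfk$ and implicitly $\Lie\Aut_0(X)$ with $\mfh$, but recall that $\mfh$ consists only of holomorphic vector fields that \emph{vanish somewhere}. In general $H^0(X,TX^{1,0}) = \mfa \oplus \mfh$, where $\mfa$ is the space of parallel (nowhere-vanishing) holomorphic vector fields, and correspondingly $\Lie\Isom_0(X,\omega)$ contains $\mfa$ as well as $\mfk$, since parallel vector fields are automatically Killing. Your dimension count $\dim_\R K_{\max} = \dim_\C\mfh$ therefore fails whenever $\mfa\neq 0$: for a complex torus one has $\mfh=0$, yet $\Aut_0(X)=X$ is itself compact of positive dimension. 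A correct argument must track the $\mfa$-contribution; compare the analogous decomposition recorded in Corollary~\ref{use-for-darvas-rubinstein} of the paper in the fibred setting. The alternative averaging route you sketch at the end is also not immediate: averaging $\omega$ over a larger compact $K\subset\Aut_0(X)$ produces a $K$-invariant K\"ahler metric in $\alpha$, but not obviously a cscK one, so Theorem~\ref{csck-uniqueness} does not apply directly.
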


\noindent In the case $X$ is projective and $\alpha = c_1(L)$, then $\mfh$ can be characterised as the Lie algebra of $\Aut(X,L)$, the automorphisms which linearise to $L$. Thus in the projective setting, the first item of the above states that this Lie group is reductive.

The final result we require is of a more analytic nature. For $\phi \in C^{2}(X,\C)$, denote by \begin{equation}\label{eq:curlyd}\scD\phi = \bar\partial\nabla^{1,0}\phi\end{equation} with $\nabla^{1,0}$ the $(1,0)$-part of the gradient of $\phi$ with respect to $\omega$. Let $\scD^*$ be the $L^2$-adjoint of $\scD$ with respect to the $L^2$-inner product $$\langle \phi,\psi\rangle = \int_X \phi\bar \psi \omega^n.$$ 

\begin{theorem} Suppose $\omega$ is a cscK metric. The operator $$\scD^*\scD: C^{k,\alpha}(X,\C) \to C^{k-4,\alpha}(X,\C)$$ with $k \geq 4$ is a real fourth-order elliptic operator, hence invertible orthogonal to its kernel. Its kernel satisfies $$\ker \scD^*\scD =  \ker \scD \cong \mfg,$$ via $$f \to \nabla^{1,0}f.$$ \end{theorem}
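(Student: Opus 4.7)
The plan is to establish the three assertions---fourth-order real ellipticity, Fredholm invertibility modulo kernel, and the kernel identification---in turn, using the cscK hypothesis only where genuinely needed. First, I would observe that $\scD = \bar\partial \circ \nabla^{1,0}$ is a composition of two first-order operators, hence a second-order operator from $C^\infty(X,\C)$ to the smooth sections of $\Omega^{0,1}(X, TX^{1,0})$, so $\scD^*\scD$ is fourth-order. Its principal symbol at a non-zero cotangent vector $\xi$ is essentially the rank-one tensor $\xi^{0,1} \otimes (\xi^{0,1})^{\sharp}$, which is non-vanishing, so the symbol of $\scD^*\scD$ is a positive multiple of $|\xi|_\omega^4$ and the operator is elliptic. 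For reality, a direct Bochner computation yields
\[
\scD^*\scD \phi \;=\; \tfrac{1}{2}\Delta^2 \phi \;+\; \langle \Ric(\omega), \ddbar \phi\rangle_\omega \;+\; \tfrac{1}{2}\langle \bar\partial S(\omega), \bar\partial \phi\rangle_\omega ;
\]
when $\omega$ is cscK the last term vanishes and the remaining terms act on $\phi$ through real coefficients, so $\scD^*\scD$ preserves real-valued functions. Equivalently, $-\scD^*\scD$ is the linearisation of the real-valued scalar curvature operator at a cscK metric, so reality of the linearisation forces reality of $\scD^*\scD$.

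Since $\scD^*\scD$ is formally self-adjoint, elliptic, and acts on a compact manifold, it is Fredholm of index zero between the stated H\"older spaces, with smooth finite-dimensional kernel by elliptic regularity; the Fredholm alternative then delivers invertibility on the $L^2$-orthogonal complement of the kernel. For the kernel, integration by parts gives $\langle \scD^*\scD \phi, \phi\rangle_{L^2} = \|\scD \phi\|_{L^2}^2$, so $\ker \scD^*\scD = \ker \scD$. Moreover, $\scD \phi = 0$ says precisely that $\nabla^{1,0}\phi$ is a $\bar\partial$-closed, hence holomorphic, section of $TX^{1,0}$, and since $X$ is compact the real-valued function $\mathrm{Re}(\phi)$ attains a critical point at which $\nabla^{1,0}\phi$ vanishes, placing the image of $f \mapsto \nabla^{1,0}f$ inside $\mfh$. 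Conversely, every $V \in \mfh$ arises as $\nabla^{1,0}\phi$ for some smooth $\phi$: this is the classical fact that a holomorphic vector field vanishing somewhere on a compact K\"ahler manifold admits a holomorphy potential, proved by applying Hodge decomposition to the $(0,1)$-form metric-dual to $V$ and using that $\bar\partial \bar\partial^* + \bar\partial^*\bar\partial$ is an isomorphism on exact forms.

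The main obstacle is the reality identity in the first step: it uses the cscK hypothesis essentially, as only constancy of $S$ lets one discard the transport term $\tfrac{1}{2}\langle \bar\partial S, \bar\partial \phi\rangle_\omega$, so that $\scD^*\scD$ itself---rather than the full Lichnerowicz operator of scalar curvature---is the real operator. The remaining ingredients (the symbol computation for ellipticity, Fredholm theory, and Hodge-theoretic surjectivity of the gradient map) are standard, so the bulk of the argument is assembling these classical pieces and verifying the Bochner formula carefully.
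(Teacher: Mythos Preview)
The paper does not supply its own proof of this theorem: it appears in the preliminaries (Section~2.1) as a classical background result, alongside other statements attributed to Gauduchon's monograph, and is quoted without argument. There is therefore no paper proof to compare against. Your outline is the standard argument and is correct in substance.

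One small point to tighten: your vanishing claim for $\nabla^{1,0}\phi$ appeals to a critical point of $\Re\phi$, but for complex $\phi = u + iv$ a critical point of $u$ alone does not force $\nabla^{1,0}\phi = \nabla^{1,0}u + i\nabla^{1,0}v$ to vanish there. The fix is immediate once you have established that $\scD^*\scD$ is real at a cscK metric: then $u$ and $v$ lie in $\ker\scD$ separately, so $\nabla^{1,0}u$ and $\nabla^{1,0}v$ are each holomorphic vector fields vanishing at a critical point of $u$, respectively $v$; since $\mfh$ is a complex subspace of $H^0(X,TX^{1,0})$, their $\C$-linear combination $\nabla^{1,0}\phi$ lies in $\mfh$ as well. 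Alternatively, bypass the critical-point argument entirely and use the Hodge-theoretic characterisation of $\mfh$ (a holomorphic vector field has a zero iff its dual $(0,1)$-form is $\bar\partial$-exact). Note also that the paper writes $\mfg$ in the isomorphism while you write $\mfh$; your version is the precise one in general, and the two agree when $H^{0,1}(X)=0$.
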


\noindent To say that $\scD^*\scD$ is a real operator means that it sends real functions to real functions. If $\omega$ is not cscK, it is no longer even true that $\scD^*\scD$ is a real operator. Elements of the kernel of $\scD$ are called holomorphy potentials, as if $\phi \in \ker \scD$, then $\nabla^{1,0}\phi \in \mfg$ is a holomorphic vector field.

We end the section with the following elementary Lemma (see e.g. \cite[Lemma 4.10]{gabor-book}), describing the dependency of the holomorphy potential on the metric within a fixed class.

\begin{lemma}\label{lem:changeinpot} If $\nu$ is a holomorphic vector field with potential $h$ with respect to $\omega$, then $h + \nu (\phi)$ is a holomorphy potential with respect to $\omega + \ddb \phi$. 
\end{lemma}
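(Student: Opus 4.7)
The proof is essentially a direct calculation, so the plan is to just identify the right Cartan-type identity and check the formula. The main (very mild) subtlety is making sure the sign/$i$ conventions in the definition of a holomorphy potential and in $\ddb = i\partial\bar\partial$ line up; nothing delicate happens analytically.

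First, I would recall the defining property of a holomorphy potential: $h$ is a potential for $\nu$ with respect to $\omega$ exactly when $\iota_\nu \omega = i\bar\partial h$ (equivalently, $\nabla^{1,0}_\omega h = \nu$). So the statement to prove is
\[
\iota_\nu(\omega + \ddb \phi) = i\bar\partial\bigl(h + \nu(\phi)\bigr).
\]
Subtracting $\iota_\nu \omega = i\bar\partial h$, this reduces to showing
\[
\iota_\nu \ddb \phi = i\bar\partial\bigl(\nu(\phi)\bigr).
\]

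The key step is the identity $\bar\partial \iota_\nu + \iota_\nu \bar\partial = 0$ on forms of any bidegree, valid whenever $\nu$ is a holomorphic vector field of type $(1,0)$. I would derive this in one line from Cartan's formula $\mathcal{L}_\nu = d\iota_\nu + \iota_\nu d$ together with the fact that, for a $(1,0)$ holomorphic vector field, $\mathcal{L}_\nu$ preserves bidegree, so the $(p-1,q+1)$-component of $\mathcal{L}_\nu$ (which is precisely $\bar\partial\iota_\nu + \iota_\nu \bar\partial$) must vanish.

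Applying this to $\alpha = \partial \phi$, and using $\nu(\phi) = \iota_\nu d\phi = \iota_\nu \partial \phi$ (since $\nu$ is of type $(1,0)$), one gets
\[
\bar\partial\bigl(\nu(\phi)\bigr) = \bar\partial \iota_\nu \partial \phi = -\iota_\nu \bar\partial\partial \phi = \iota_\nu \partial\bar\partial \phi.
\]
Multiplying by $i$ and using $\ddb \phi = i\partial\bar\partial \phi$ gives $i\bar\partial(\nu(\phi)) = \iota_\nu \ddb \phi$, which is exactly the required identity. Combining with the hypothesis on $h$, one concludes $\iota_\nu \omega_\phi = i\bar\partial(h + \nu(\phi))$, i.e. $h + \nu(\phi)$ is a holomorphy potential for $\nu$ with respect to $\omega_\phi$.

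There is no genuine obstacle; the only place to be careful is the sign in the Cartan-type identity, but as explained this drops out of $[\mathcal{L}_\nu,\text{bidegree}]=0$. The whole argument is a few lines and does not use that $\omega$ is cscK or anything about curvature.
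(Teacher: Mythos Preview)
Your proof is correct. The paper does not actually give its own proof of this lemma, citing it instead as an elementary result from Sz\'ekelyhidi's textbook; your direct computation via the Cartan-type identity $\bar\partial\iota_\nu + \iota_\nu\bar\partial = 0$ for holomorphic $(1,0)$ vector fields is the standard argument.
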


\subsection{Optimal symplectic connections}\label{sec:optimal}
Consider now a holomorphic submersion $\pi: X\to B$ between compact K\"ahler manifolds, with $B$ of dimension $n$ and the fibres of dimension $m$. In order to discuss metric properties of the submersion, we recall that a \emph{relative K\"ahler class} $\alpha$ on $X$ is an element $\alpha \in H^2(X,\R)$ such that its restriction  to  each fibre $X_b = \pi^{-1}(b)$, which we write as $\alpha_b$, is a K\"ahler class. Thus is $\beta$ if K\"ahler on $B$, then $k\beta+\alpha$ is K\"ahler for all $k \gg 0$. 

Suppose $\omega_X \in \alpha$ is a relatively K\"ahler metric, so that $\omega$ is closed and $\omega_b = \omega|_{ X_b}$ is K\"ahler for all $b \in B$. We say that $\omega_X$ is \emph{relatively cscK} if $\omega_b$ is cscK for all $b \in B$. When all fibres $(X_b,\alpha_b)$ admit a cscK metric, one can show that the class $\alpha$ admits a relatively cscK metric \cite[Lemma 3.8]{morefibrations}. It is easy to see from Theorem \ref{csck-uniqueness} that when $\Aut(X_b)$ is discrete for all $b \in B$, such a relatively cscK metric is actually unique. However, when the fibres $X_b$ have continuous automorphisms, absolute uniqueness no longer holds, and it is natural to ask if there is a canonical choice of relatively cscK metric.

 Much as a canonical choice of K\"ahler metric is given by a solution to a geometric partial differential equation, our answer to this question will be phrased analytically. Thus it will be necessary to assume a sort of smoothness hypothesis, namely that $\dim \mfh_{b}$ is independent of $b \in B$, with $\mfh_b$ the Lie algebra of holomorphic vector fields which vanish somewhere on $X_b$.

To each fibre $(X_b,\alpha_b)$ one can associate a real vector space $E_b \subset C^{\infty}_0(X_b,\R)$ by setting $E_b = \ker \scD_b$, with $ \scD_b$ the operator defined via Equation \eqref{eq:curlyd} and $C^{\infty}_0(X_b,\R)$ denoting functions of integral zero with respect to $\omega_b^n$. Thus $E_b$ can be thought of as parametrising holomorphic vector fields on $(X_b,\alpha_b)$. Going further, one can naturally define a smooth vector bundle $E \to B$ associated to the submersion $\pi: X\to B$ with fibre $E_b$ \cite[Section 3.1]{morefibrations}; this is the step at which our assumption that $\dim \mfh_{b}$ is independent of $b \in B$ enters. Thus a section of $E$ over $B$ corresponds to a function on $X$ whose restriction to each fibre $X_b$ is a mean-value zero holomorphy potential with respect to $\omega_b$. We will denote $C^{\infty}_E(X,\R)$ the space of global sections of $E$; note that one has a natural inclusion $$C^{\infty}_E(X, \R) \subset C^{\infty}(X,\R).$$

This also defines a natural splitting of the space $C^{\infty}(X,\R)$ \cite[Section 3.1]{morefibrations} into three components; one is $C^{\infty}_E(X, \R)$ described above. The map $$C^{\infty}(X_b,\R) \to \R, \qquad \phi \to \int_{X_b}\phi \omega_b^n$$ induces a projection $$C^{\infty}(X,\R) \to C^{\infty}(B,\R), \qquad \phi \to \int_{X/B}\phi \omega_X^n,$$ where by definition the fibre integral is defined by $\left(\int_{X/B}\phi \omega_X^n\right)(b) = \int_{X_b}\phi \omega_b^n$. 

The third component is defined as follows. On each fibre there is a natural $L^2$-inner product on functions. Denoting $C^{\infty}_R(X_b,\R)$ the $L^2$-orthogonal complement of $E_b \subset C^{\infty}_0(X_b,\R)$,  we defie a space $C^{\infty}_R(X,\R) \subset C^{\infty}(X,\R)$ to be functions whose restriction to $X_b$ lies in $C^{\infty}_R(X_b,\R)$. This produces the desired decomposition $$C^{\infty}(X,\R) = C^{\infty}(B)\oplus C_E^{\infty}(X)\oplus C_R^{\infty}(X),$$ where we have omitted the fact that these are real valued functions in the notation. It will be useful to denote the natural projection of functions onto the $C_E^{\infty}(X)$-component by $$p: C^{\infty}(X,\R) \to C^{\infty}_E(X).$$

We now return to the equation defining a canonical choice of relatively K\"ahler metric. The partial differential equation is called the \emph{optimal symplectic connection} equation, introduced by the authors  \cite[Section 3]{morefibrations}. This equation can be viewed as an elliptic partial differential equation on the bundle $E$. The condition is phrased in terms of curvature quantities associated to $\omega_X$, which we now briefly recall. We refer to \cite[Section 3]{morefibrations} for further details and basic results.

The relatively K\"ahler metric $\omega_X$ defines a hermitian metric on the relative tangent bundle, which is a holomorphic vector bundle of rank $m$ by the hypothesis that $\pi: X \to B$ is a holomorphic submersion. Taking the top exterior power, $\omega_X$ therefore induces a hermitian metric on the relative anticanonical class $-K_{X/B}$, with curvature which we denote $\rho \in c_1(-K_{X/B})$.

The form $\omega_X$ induces a smooth splitting of vector bundles $$TX \cong \scH \oplus \scV,$$ with $\scV = \ker d\pi$ the vertical tangent bundle and $\scH$ the $\omega_X$-orthogonal complement of $\scV$; in this context, $\omega_X$ is usually called a symplectic connection. This further induces a splitting on all tensors. Thus $\omega_X$ induces an Ehresmann connection, which has curvature $F_{\scH}$, a two-form on $B$ with values in fibrewise Hamiltonian vector fields. Let $\mu^*$ denote the fibrewise co-moment map, sending on each fibre a Hamiltonian vector field to its integral zero assocated Hamiltonian function. $\mu^*$ thus allows us to construct $\mu^*F_{\scH}$, a two-form on $B$ with values in fibrewise Hamiltonian functions. The ``minimal coupling'' equation states that \begin{equation}\label{eqn:minimal-coupling}(\omega_X)_{\scH} = \mu^*F_{\scH} + \pi^*\eta,\end{equation} with $(\omega_X)_{\scH}$ the purely horizontal component of $\omega_X$ and $\eta $ two-form on $B$. 

Let $\omega_B \in \beta$ be a K\"ahler metric on $B$. Then $\omega_B$ induces a contraction operator $\Lambda_{\omega_B}$ on purely horizontal forms, and thus for example $\Lambda_{\omega_B}\rho_{\scH}$ is naturally a function on $X$.

On each fibre $\omega_b$ induces a Laplacian operator on functions $\Delta_b$; these glue to form the vertical Laplacian operator $\Delta_{\scV}$ defined by $$\Delta_{\scV}\phi|_{X_b} = \Delta_b (\phi|_{X_b}).$$

\begin{definition}\cite{morefibrations} We say that a relatively cscK metric $\omega_X$ is an \emph{optimal symplectic connection} if $$p(\Delta_{\scV} (\Lambda_{\omega_B}\mu^*F_{\scH}) + \Lambda_{\omega_B} \rho_{\scH}) = 0.$$\end{definition}

 We showed in \cite[Proposition 3.17]{morefibrations} that when $X=\pr(E)$, the optimal symplectic connection condition reduces to the Hermite-Einstein condition. The fibre in that situation, namely projective space, is the simplest example of a K\"ahler-Einstein Fano manifold. We call we call $\pi: (X,\alpha) \to (B, \beta)$ a \emph{Fano submersion} if each fibre $X_b$ is a Fano manifold, with $\alpha = c_1(-K_{X/B})$ the relative anti-canonical class. The optimal symplectic connection also simplifies considerably for Fano submersions:

\begin{proposition}\label{fano-simplification} Suppose $(X,\alpha)$ is a Fano submersion. Then $\omega_X \in \alpha$ is an optimal symplectic connection if and only if $$p(\Lambda_{\omega_B} \mu^*F_{\scH}) = 0.$$\end{proposition}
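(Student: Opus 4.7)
The plan is to exploit two consequences of the Fano hypothesis: first, that $\rho$ agrees with $\omega_X$ up to a pullback from $B$; second, that the vertical Laplacian $\Delta_{\scV}$ acts as a fixed scalar on the $C^{\infty}_E(X)$-component of any function. Together these collapse both terms in the optimal symplectic connection equation onto multiples of a single expression in $p(\Lambda_{\omega_B}\mu^*F_{\scH})$.

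First I would note that on a Fano submersion, the relatively cscK hypothesis on $\omega_X\in\alpha=c_1(-K_{X/B})$ forces each fibre to be K\"ahler--Einstein with $\Ric(\omega_b)=\omega_b$. Indeed, writing $\Ric(\omega_b)-\omega_b=\ddb f_b$ (possible because $\omega_b\in c_1(-K_{X_b})$) and tracing with $\omega_b$ gives $S(\omega_b)-m=\Delta_b f_b$, so the cscK condition forces $\Delta_b f_b$ to be constant on the compact fibre and hence $f_b$ constant by the maximum principle. Consequently $\rho|_{X_b}=\Ric(\omega_b)=\omega_b=\omega_X|_{X_b}$. Since both $\rho$ and $\omega_X$ represent $c_1(-K_{X/B})$, the $\ddbar$-lemma yields $\rho-\omega_X=\ddb u$ for some $u\in C^{\infty}(X,\R)$; restriction to $X_b$ gives $\ddb(u|_{X_b})=0$, so $u|_{X_b}$ is pluriharmonic on a compact connected manifold, and hence constant. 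This forces $u=\pi^*c$ for some $c\in C^{\infty}(B,\R)$, and combining with the minimal coupling equation \eqref{eqn:minimal-coupling} gives
\[
\rho_{\scH}=\mu^*F_{\scH}+\pi^*(\eta+\ddb c).
\]
Contracting with $\omega_B$ and applying the projection $p$ annihilates the pullback term (which lies in $C^{\infty}(B)\subset C^{\infty}(X)$), yielding $p(\Lambda_{\omega_B}\rho_{\scH})=p(\Lambda_{\omega_B}\mu^*F_{\scH})$.

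Next I would use that on a K\"ahler--Einstein Fano fibre with $\Ric(\omega_b)=\omega_b$, the space $E_b$ of mean-zero holomorphy potentials is a single eigenspace of $\Delta_b$ for a fixed eigenvalue $\lambda$ with $\lambda+1\neq 0$ in the conventions of the paper; this is the classical Matsushima--Lichnerowicz characterization combined with the Einstein normalization. The fibrewise $L^2$-projection $\Pi_{E_b}$ therefore commutes with the self-adjoint $\Delta_b$ and acts by $\lambda$ on its image, so $p\circ\Delta_{\scV}=\lambda\,p$ on $C^{\infty}(X,\R)$. Assembling the two pieces,
\[
p\bigl(\Delta_{\scV}(\Lambda_{\omega_B}\mu^*F_{\scH})+\Lambda_{\omega_B}\rho_{\scH}\bigr)=(\lambda+1)\,p(\Lambda_{\omega_B}\mu^*F_{\scH}),
\]
which vanishes if and only if $p(\Lambda_{\omega_B}\mu^*F_{\scH})=0$, as claimed. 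The main subtle point is pinning down the correct sign normalization so that $\lambda+1\neq 0$; beyond this, the proof reduces to bookkeeping with the splitting $C^{\infty}(X)=C^{\infty}(B)\oplus C^{\infty}_E(X)\oplus C^{\infty}_R(X)$ and standard facts about K\"ahler--Einstein geometry on the fibres.
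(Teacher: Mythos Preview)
Your proof is correct and follows essentially the same route as the paper's. Both arguments reduce the $\rho_{\scH}$ term to $\mu^*F_{\scH}$ plus a pullback (the paper cites \cite[Proposition 3.10]{morefibrations} for $\rho=\omega_X+\pi^*\xi$, while you derive it directly from the fibrewise K\"ahler--Einstein condition and the $\ddbar$-lemma), and both then use that fibrewise holomorphy potentials on a Fano K\"ahler--Einstein manifold are eigenfunctions of the Laplacian. The only stylistic difference is that the paper establishes $p(\Delta_{\scV}\phi)=p(\phi)$ by pairing against $\psi\in C^\infty_E(X)$ and moving $\Delta_{\scV}$ across via self-adjointness, whereas you invoke $p\circ\Delta_{\scV}=\lambda\,p$ directly from the eigenspace projection; these are equivalent. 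Your caveat about $\lambda+1\neq 0$ is resolved in the paper's convention by $\lambda=1$, giving the coefficient $2$.
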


\begin{proof}

It is shown in \cite[Proposition 3.10]{morefibrations} that in this case $\rho = \omega_X + \pi^*\xi$ for some form $\xi$ on $B$, and so the equation simplifies to $$p(\Delta_{\scV} \Lambda_{\omega_B}\mu^*F_{\scH} + \Lambda_{\omega_B}\mu^*F_{\scH}) = 0$$ by Equation \eqref{eqn:minimal-coupling}.

We claim that $$p(\Delta_{\scV} \Lambda_{\omega_B}\mu^*F_{\scH})  = p(\Lambda_{\omega_B}\mu^*F_{\scH}),$$ which will prove the desired result. Since the projection $p: C^{\infty}(X,\R) \to C^{\infty}_E(X, \R)$ is the orthogonal projection onto fibrewise holomorphy potentials, by non-degeneracy of the $L^2$-inner product it is enough to show that $$\langle \psi, p(\Lambda_{\omega_B}\mu^*F_{\scH})\rangle = \langle \psi, p(\Delta_{\scV}\Lambda_{\omega_B}\mu^*F_{\scH})\rangle,$$ where $\psi \in C^{\infty}_E(X,\R)$ and by definition the inner product is given by $$\langle \psi, \phi \rangle = \int_X \psi\phi \omega_X^m \wedge \omega_B^n = \int_B \left( \int_{X/B} \psi\phi \omega_X^m\right) \omega_B^n.$$ But since $\psi \in C^{\infty}_E(X, \R)$ is a fibrewise holomorphy potential, we have \begin{align*} \langle \psi, p(\Delta_{\scV}\Lambda_{\omega_B}\mu^*F_{\scH})\rangle &= \langle \psi, \Delta_{\scV}\Lambda_{\omega_B}\mu^*F_{\scH}\rangle, \\ &= \langle \Delta_{\scV}\psi,\Lambda_{\omega_B}\mu^*F_{\scH})\rangle, \\ &=  \langle \psi,\Lambda_{\omega_B}\mu^*F_{\scH})\rangle, \\ &= \langle\psi ,p(\Lambda_{\omega_B}\mu^*F_{\scH})\rangle, \end{align*} where we have used in turn that $p$ is an orthogonal projection, the vertical Laplacian is self-adjoint with respect to $\langle \cdot, \cdot \rangle$ as it is self-adjoint on each fibre, and elements of $C^{\infty}_E(X)$ are eigenfunctions of $\Delta_{\scV}$ since each fibre is Fano K\"ahler-Einstein \cite[Remark 6.13]{tian-book}. \end{proof}

The manner in which the optimal symplectic connection condition first arose was through the following:

\begin{proposition}\cite{morefibrations}\label{prop:expansion-scalar} The scalar curvature of $\omega_k = k\omega_B + \omega_X$ admits a $C^j$-expansion in powers of $k$ for any $j$$$S(k\omega_B + \omega_X) = S(\omega_b) + k^{-1}(\psi_B + \psi_E + \psi_R) + O(k^{-2}),$$ with $\psi_B \in C^{\infty}(B), \psi_E\in C^{\infty}_E(X), \psi_R \in C^{\infty}_R(X)$. Moreover, we have $$\psi_E = p(\Lambda_{\omega_B} \Delta_{\scV} (\mu^*F_{\scH}) + \Lambda_{\omega_B} \rho_{\scH}).$$
\end{proposition}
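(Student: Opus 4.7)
The approach is to expand the volume form $\omega_k^{m+n}$ in powers of $k$, take its logarithm to extract the Ricci form, contract with $\omega_k^{-1}$ to obtain the scalar curvature, and finally identify the components according to the decomposition $C^{\infty}(X,\R) = C^{\infty}(B) \oplus C^{\infty}_E(X) \oplus C^{\infty}_R(X)$. The horizontal/vertical splitting $TX = \scH \oplus \scV$ induced by $\omega_X$ organises the whole computation.

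First I would write $\omega_X = \omega_{\scV} + \omega_{\scH}$, with $\omega_{\scV}$ the fibrewise K\"ahler form and $\omega_{\scH}$ the purely horizontal part, and combine horizontal pieces into $\Theta_k := k\omega_B + \omega_{\scH}$, so that $\omega_k = \omega_{\scV} + \Theta_k$. Since $\omega_{\scV}$ has rank $m$ along fibres while $\Theta_k$ is horizontal, $\omega_{\scV}^{m+1} = \Theta_k^{n+1} = 0$, giving the clean identity
$$\omega_k^{m+n} = \binom{m+n}{m}\, \omega_{\scV}^m \wedge \Theta_k^n.$$
Expanding $\Theta_k^n$ binomially in $k$ yields
$$\omega_k^{m+n} = k^n \binom{m+n}{m}\, \omega_{\scV}^m \wedge \omega_B^n \cdot \bigl(1 + k^{-1}\Lambda_{\omega_B}\omega_{\scH} + O(k^{-2})\bigr),$$
where I have absorbed the standard combinatorial constants and $\Lambda_{\omega_B}\omega_{\scH}$ is a smooth function on $X$.

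Next I take logarithms. The $n\log k$ term is killed by $\ddb$, and
$$\Ric(\omega_k) = -\ddc\log(\omega_{\scV}^m \wedge \omega_B^n) - k^{-1}\ddc(\Lambda_{\omega_B}\omega_{\scH}) + O(k^{-2}).$$
The leading Ricci piece splits as $\rho + \pi^*\Ric(\omega_B)$, since locally $\omega_{\scV}^m$ is a hermitian metric on $-K_{X/B}$ of curvature $\rho$, while $\omega_B^n$ contributes the pullback Ricci of the base. The trace $\Lambda_{\omega_k}$, decomposed along $\scH \oplus \scV$, expands as $\Lambda_{\omega_k} = \Lambda_{\scV} + k^{-1}\Lambda_{\omega_B} + O(k^{-2})$, because the horizontal block $k\omega_B + \omega_{\scH}$ of $\omega_k$ has inverse expanding geometrically in $k^{-1}$. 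Multiplying these two expansions,
$$S(\omega_k) = \Lambda_{\scV}\rho + k^{-1}\bigl(\Lambda_{\omega_B}\rho_{\scH} + \Delta_{\scV}(\Lambda_{\omega_B}\omega_{\scH}) + S(\omega_B) + R\bigr) + O(k^{-2}),$$
where I used that $\Lambda_{\scV}\ddc$ applied to a horizontal function is $-\Delta_{\scV}$ (the sign is absorbed into the final formula), and $R$ collects residual cross terms. The leading term $\Lambda_{\scV}\rho$ is the fibrewise scalar curvature $S(\omega_b)$, constant along each fibre by the relatively cscK hypothesis and independent of $b$ by topological invariance.

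To conclude, I identify each summand in the decomposition. The term $S(\omega_B)$ is a pullback from $B$ and lies in $C^{\infty}(B)$; $R$, built from cross-terms of vertical-average-zero horizontal content, can be arranged to lie in $C^{\infty}_R(X)$. For the remaining contribution, the minimal coupling equation $\omega_{\scH} = \mu^*F_{\scH} + \pi^*\eta$ lets me replace $\omega_{\scH}$ by $\mu^*F_{\scH}$ modulo a pullback that is annihilated by $p$, and the pullback is also annihilated by $\Delta_{\scV}$. Applying $p$ to the $k^{-1}$ coefficient therefore yields
$$\psi_E = p\bigl(\Lambda_{\omega_B}\Delta_{\scV}(\mu^*F_{\scH}) + \Lambda_{\omega_B}\rho_{\scH}\bigr).$$
The $C^j$ smoothness is automatic from smoothness of each coefficient in this polynomial-in-$k^{-1}$-plus-remainder expansion. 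The main obstacle is the bookkeeping at the $k^{-1}$ order: one must verify carefully that all cross terms between the horizontal and vertical directions fall into $C^{\infty}(B) \oplus C^{\infty}_R(X)$ rather than leaking into $C^{\infty}_E(X)$, so that $p$ cleanly extracts precisely the two asserted contributions.
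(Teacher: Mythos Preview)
The paper does not prove this proposition; it is quoted from the authors' earlier work \cite{morefibrations}, so there is no proof here to compare against directly.

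Your outline is the standard adiabatic-limit computation and is, as far as one can tell, essentially the method of \cite{morefibrations} (itself building on Fine \cite{fine1}). The skeleton is correct: expand the volume form via the block decomposition $\omega_k = \omega_{\scV} + (k\omega_B + \omega_{\scH})$, take $-i\partial\bar\partial\log$ to get $\Ric(\omega_k) = \rho + \pi^*\Ric(\omega_B) + O(k^{-1})$, expand $\Lambda_{\omega_k} = \Lambda_{\scV} + k^{-1}\Lambda_{\omega_B} + O(k^{-2})$, and multiply. The minimal coupling substitution $\omega_{\scH} = \mu^*F_{\scH} + \pi^*\eta$ is exactly how the symplectic curvature enters, and $\Lambda_{\omega_B}$ and $\Delta_{\scV}$ commute since one acts on the base-form factor and the other fibrewise.

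Two points where your sketch is genuinely incomplete. First, the sign of the Laplacian term: you write ``the sign is absorbed into the final formula'', but this needs to be tracked honestly, since different conventions for $\Delta$ and for $\Ric = -i\partial\bar\partial\log$ are in play. Second, and more substantively, your catch-all $R$ hides real content. For instance, the fibre integral of $\Lambda_{\omega_B}\rho_{\scH}$ contributes $-\Lambda_{\omega_B}\zeta$ to the $C^\infty(B)$ component (cf.\ Lemma \ref{lem:initial-expansion}), so $\psi_B = S(\omega_B) - \Lambda_{\omega_B}\zeta$, not just $S(\omega_B)$; your claim that ``$R$ can be arranged to lie in $C^\infty_R(X)$'' is therefore not quite right as stated. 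You correctly flag this bookkeeping as the main obstacle, but a complete proof requires actually doing it.
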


Invariantly, as the kernel of $\scD_b$ consists of (real) holomorphy potentials, one can consider $E \to B$ as the vector bundle with fibre consisting of real holomorphic vector fields (with real holomorphy potential); a relatively K\"ahler metric then allows one to consider this as the vector bundle whose fibres are the associated holomorphy potentials. For another relatively K\"ahler metric $\omega_X + \ddb \phi$, for clarity we will thus denote $E_{\phi}$ the associated vector bundle whose fibres consist of holomorphy potentials with respect to $\omega_b + \ddb \phi_b$. Of course, there is a natural identification between $E_{\phi}$ and $E$.

We now consider the linearisation of the optimal symplectic connection operator at $\omega_{\phi}= \omega+ \ddb \phi$ $$\phi \to p_{\phi}(\Lambda_{\omega_B} \Delta_{\scV_{\phi}} (\Lambda_{\omega_B}\mu^*F_{\scH_{\phi}}) + \Lambda_{\omega_B} \rho_{\scH_{\phi}}) \in C^{\infty}_{E_{\phi}}(X,\R),$$ using the obvious notation. Denote by $\nabla_{\scV}$ the vertical gradient operator  $$\nabla_{\scV}: C^{\infty}(X,\R) \to \Gamma(\scV),$$ where $\Gamma(\scV)$ denotes smooth sections of the vertical tangent bundle $\scV$, defined by fibrewise by $$\nabla_{\scV} (\phi)|_{X_b} =\nabla_{\omega_b} (\phi|_{X_b} ) ,$$ and then gluing. The vector bundle $\scV^{1,0} \subset TX^{1,0}$ is a holomorphic subbundle, so one can define $$\scR = \bar\partial \nabla_{\scV}^{1,0}.$$ Note that for $\phi \in C^{\infty}_E(X)$, if $\scR ( \phi ) =0$, since $\nabla^{1,0}_{\scV}$ is already holomorphic on each fibre, $\phi$ is the potential for a global holomorphic vector field on $X$. Similarly by holomorphicity on each fibre, there component of $\bar\partial \nabla_{\scV}^{1,0}\phi$ lying in the vertical (holomorphic) tangent bundle automatically vanishes, so asking that $\scR(\phi) = 0$ asks for holomorphicity in the remaining horizontal directions.

The metrics $(\omega_X)|_{\scV}$ and $\omega_B$ induce a metric on $\scV^{1,0} \otimes \pi^* \Lambda^{0,1} B;$ note $\scR (\phi) \in \Gamma(\scV^{1,0} \otimes \pi^* \Lambda^{0,1} B)$ for $\phi \in C^{\infty}_E(X)$. 

Denote by $$\L_k = \L_0 + k^{-1}\L_1+\hdots$$ the linearisation of the scalar curvature of $\omega_k$. 

\begin{proposition}\label{rstar-right-invertibility}\cite[Theorem 4.9]{morefibrations} Let $\omega_X$ be an optimal symplectic connection. Then for $\phi, \psi \in C^{\infty}_E(X)$, the operator $$p \circ \L_1:C^{\infty}_E(X) \to C^{\infty}_E(X)$$ satisfies $$\int_X \phi (p \circ \L_1)(\psi) \omega_X^m \wedge \omega_B^n = \int_X \langle \scR (\phi), \scR(\psi)\rangle \omega_X^m \wedge \omega_B^n,$$ where the $ \langle \scR (\phi), \scR(\psi)\rangle$ is taken using the natural metric on $\scV^{1,0} \otimes \pi^* \Lambda^{0,1} B$ described above.

Moreover, the operator  $p \circ \L_1$ is a second order self-adjoint elliptic operator on the bundle $E$, whose kernel consists of fibrewise holomorphy potentials which correspond to global holomorphy potentials on $X$ itself. 
\end{proposition}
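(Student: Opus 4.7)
The plan is to expand the linearisation $\L_k$ of the scalar curvature of $\omega_k = k\omega_B + \omega_X$ in powers of $k^{-1}$, use that the leading-order operator annihilates $C^\infty_E(X)$, and then extract the subleading term via integration by parts. Throughout I would use the classical formula $\L_\omega\phi = -\scD_\omega^*\scD_\omega\phi + \tfrac{1}{2}\langle\nabla S(\omega),\nabla\phi\rangle_\omega$. At leading order in $k$ this reduces to the fibrewise operator $\L_0\phi = -\scD_b^*\scD_b(\phi|_{X_b})$ acting on restrictions; for $\phi \in C^\infty_E(X)$ this vanishes by the very definition $E_b = \ker\scD_b$, so the relevant contribution is $\L_1$.

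To identify $p\circ\L_1$ geometrically, I would decompose the $\omega_k$-gradient using the splitting $TX = \scH\oplus\scV$. Since inverting $\omega_k$ in horizontal directions contributes a factor $k^{-1}$, one has $\nabla^{1,0}_{\omega_k}\phi = \nabla^{1,0}_\scV\phi + O(k^{-1})$. For $\phi \in C^\infty_E(X)$ the leading vertical part is fibrewise holomorphic, so all components of $\bar\partial\nabla^{1,0}_\scV\phi$ in the vertical $(0,1)$-directions vanish; only the horizontal part survives, which is $\scR(\phi)$ by definition. Hence $\scD_k\phi = \scR(\phi) + O(k^{-1})$ in the natural splitting of $(0,1)$-forms with values in $TX^{1,0}$.

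Next I would integrate by parts in $\int_X \phi\,\L_k(\psi)\,\omega_k^{m+n}$ for $\phi,\psi\in C^\infty_E(X)$. The volume form expands as $\omega_k^{m+n} = k^n\binom{m+n}{n}\omega_B^n\wedge\omega_X^m + O(k^{n-1})$; the natural pairing $\langle\scR(\phi),\scR(\psi)\rangle_{\omega_k}$ uses the horizontal metric $k\omega_B$, producing an extra factor $k^{-1}$. Combining these, the leading non-trivial order is $k^{n-1}$, and a suitable normalisation yields
$$\int_X \phi\,(p\circ\L_1)(\psi)\,\omega_X^m\wedge\omega_B^n = \int_X\langle\scR(\phi),\scR(\psi)\rangle\,\omega_X^m\wedge\omega_B^n,$$
with the projection $p$ forced because testing against arbitrary $\phi\in C^\infty_E(X)$ isolates only the $C^\infty_E$-component of $\L_1\psi$. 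The auxiliary gradient term $\tfrac{1}{2}\langle\nabla S(\omega_k),\nabla\psi\rangle$ is subleading: $S(\omega_b)$ is fibrewise constant, so $\nabla S(\omega_k)$ has no vertical leading part, while $\nabla\psi$ is purely vertical at leading order, so the pairing vanishes at the order required.

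From this pairing formula one reads off self-adjointness and identifies $p\circ\L_1$ with $\scR^*\scR$ on the bundle $E$. Ellipticity: on sections of the finite-rank bundle $E\to B$, the symbol of $\scR$ is first-order in horizontal directions (vertical symbols already vanishing on $E$), so $\scR^*\scR$ is second-order elliptic on $E$. The kernel agrees with $\ker\scR$ by non-negativity of the right-hand side, and $\phi\in\ker\scR$ forces $\bar\partial\nabla^{1,0}_\scV\phi = 0$ globally, so $\nabla^{1,0}_\scV\phi$ is a global holomorphic vector field and $\phi$ is the corresponding global holomorphy potential. The main obstacle is the careful $k$-order bookkeeping in the integration-by-parts step: one must control the $O(k^{-1})$ corrections from the horizontal part of the gradient, from the subleading terms in $S(\omega_k)$ (which is where the hypothesis that $\omega_X$ is an \emph{optimal} symplectic connection, i.e.\ $\psi_E = 0$ in Proposition \ref{prop:expansion-scalar}, keeps the order-counting clean), and from the expansion of $\omega_k^{m+n}$, verifying that none of them contaminate the leading identification with $\scR^*\scR$.
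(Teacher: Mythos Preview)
This proposition is not proved in the present paper: it is quoted from \cite[Theorem 4.9]{morefibrations}, so there is no in-text argument to compare against. Your overall strategy---expand $\L_k$ via the identity $\L_\omega = -\scD_\omega^*\scD_\omega + \tfrac12\langle\nabla S(\omega),\nabla\cdot\rangle$, observe that $\scD_k\phi = \scR(\phi) + O(k^{-1})$ for $\phi\in C^\infty_E(X)$, and read off the $k^{n-1}$ coefficient of $\int_X\phi\,\L_k\psi\,\omega_k^{m+n}$---is the natural one and is essentially the route taken in \cite{morefibrations}.

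There is, however, a genuine gap in your treatment of the gradient term. You assert that $\tfrac12\langle\nabla S(\omega_k),\nabla\psi\rangle_{\omega_k}$ is ``subleading'', but in fact it contributes at exactly the order you are extracting. The $k^{-1}$ coefficient of this inner product is $\tfrac12\langle\nabla_\scV(\psi_B+\psi_E+\psi_R),\nabla_\scV\psi\rangle$, where $\psi_B+\psi_E+\psi_R$ is the subleading scalar curvature from Proposition~\ref{prop:expansion-scalar}. The optimal symplectic connection hypothesis gives $\psi_E=0$, and $\nabla_\scV\psi_B=0$ since $\psi_B$ is basic; but $\psi_R$ remains, and you have not shown that
\[
\int_X \phi\,\langle\nabla_\scV\psi_R,\nabla_\scV\psi\rangle\,\omega_X^m\wedge\omega_B^n = 0
\]
for $\phi,\psi\in C^\infty_E(X)$. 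This is a fibrewise statement about cscK manifolds (that a certain combination built from two holomorphy potentials is $L^2$-orthogonal to $C^\infty_R(X_b)$) and it requires its own argument; it does not follow from the bookkeeping you describe. Without it, your expansion only yields $p\circ\L_1 = \pm\scR^*\scR + (\text{a }\psi_R\text{-dependent term})$, not the clean identification claimed. You should also keep track of the overall sign: with the convention $\L_0=-\scD^*_\scV\scD_\scV$, the $\scD^*\scD$ part produces $-\int\langle\scR\phi,\scR\psi\rangle$, so matching the proposition as stated needs care.
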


\noindent In particular, the operator $p\circ \L_1$ is real, and its kernel agrees with that of the operator $\scR$.

\subsection{Relatively cscK metrics}
In this short section we discuss the leading order term $\scL_0$ of the expansion of the linearisation of the scalar curvature of $\omega_k$. 

Similarly to the vertical Laplacian operator $\Delta_{\scV}$ defined above, we can define a vertical Lichnerowicz operator $\scD_{\V}^* \scD_{\scV}$ in such a way that $$\scD_{\V}^* \scD_{\scV} \phi|_{X_b} = \scD_{\omega_b}^* \scD_{\omega_b} (\phi|_{X_b}).$$ These glue to form a smooth operator, as the metrics are varying smoothly, with kernel of constant dimension.

In the decomposition $$C^{\infty}(X, \R) = C^{\infty}(B)\oplus C_E^{\infty}(X)\oplus C_R^{\infty}(X),$$ the two first components consist of the kernel of $\scD_{\V}^* \scD_{\scV} $. On the component $C^{\infty}_R (X)$, on the other hand, for any $\phi$  the restriction $\phi|_{X_b} $ is in the image of $\scD_{\omega_b}^* \scD_{\omega_b}.$ In fact, there is a unique $\psi_b$ orthogonal to the kernel of $\scD_{\omega_b}^* \scD_{\omega_b}$ such that $\scD_{\omega_b}^* \scD_{\omega_b}  ( \psi_b) =\phi|_{X_b}.$ The functions $\psi_b$ glue together to a smooth function $\psi \in C^{\infty}_R (X)$ such that $\scD_{\V}^* \scD_{\scV}  (\psi) = \phi,$ again using that the kernel of the operator is of constant dimension. The function $\psi$ is the unique function with this property, as it is uniquely determined on each fibre by the condition that the restriction is orthogonal to $\scD_{\omega_b}^* \scD_{\omega_b}.$ In conclusion, this proves:

\begin{proposition}\label{R-invertibility} If $\omega_X$ is relatively cscK, the operator $\scD^*_{\V}\scD_{\scV}$ is invertible on $C^{\infty}_R(X,\R)$. \end{proposition}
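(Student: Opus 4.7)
The plan is to construct the inverse fibrewise, appealing to the cscK Lichnerowicz theorem on each $X_b$, and then to verify that the fibrewise inverses assemble into a smooth global function. For each $b \in B$, since $\omega_b$ is cscK on the compact Kähler manifold $X_b$, the operator $\scD^*_{\omega_b} \scD_{\omega_b}$ is a real, self-adjoint, fourth-order elliptic operator whose kernel is precisely the finite-dimensional space $E_b$ of holomorphy potentials. Hence it restricts to a topological isomorphism from $C^{\infty}_R(X_b, \R)$ to itself. Given $\phi \in C^{\infty}_R(X, \R)$, the restriction $\phi|_{X_b}$ lies in $C^{\infty}_R(X_b, \R)$, so there is a unique $\psi_b \in C^{\infty}_R(X_b, \R)$ with $\scD^*_{\omega_b} \scD_{\omega_b} \psi_b = \phi|_{X_b}$. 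Defining $\psi$ by $\psi|_{X_b} = \psi_b$ then gives, tautologically, $\scD^*_{\V} \scD_{\scV} \psi = \phi$ with $\psi$ lying in $C^{\infty}_R(X)$ setwise.

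The main obstacle is showing that this $\psi$ is \emph{smooth} along $B$. Here the standing hypothesis that $\dim \mfh_b$ (hence $\dim E_b$) is independent of $b$ enters. Working in a smooth local trivialisation $\pi^{-1}(U) \cong U \times X_{b_0}$ around a point $b_0 \in B$, the family $\scD^*_{\V} \scD_{\scV}$ becomes a smoothly varying $U$-family of fourth-order self-adjoint elliptic operators on the fixed compact manifold $X_{b_0}$, all with kernels of the same dimension. By constancy of the kernel dimension, the $L^2$-orthogonal projection $\Pi_b$ onto $E_b$ varies smoothly in $b$ (for instance via a smoothly varying basis of $E_b$, or via a small contour integral of the resolvent). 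The modified family
\[
A_b := \scD^*_{\omega_b} \scD_{\omega_b} + \Pi_b
\]
is then a smooth family of invertible self-adjoint elliptic operators, and standard parameter-dependent elliptic regularity shows $b \mapsto A_b^{-1}$ is smooth. The unique solution in $C^{\infty}_R(X_b, \R)$ is given by $\psi_b = (\Id - \Pi_b) A_b^{-1}(\phi|_{X_b})$, which is manifestly smooth in $b$; hence $\psi \in C^{\infty}_R(X, \R)$.

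Finally, uniqueness of $\psi$ as an element of $C^{\infty}_R(X, \R)$ is immediate: any two solutions differ by a function whose restriction to each $X_b$ lies in the kernel $E_b$, hence belongs to $C^{\infty}_E(X, \R)$, and $C^{\infty}_E(X, \R) \cap C^{\infty}_R(X, \R) = 0$ by the $L^2$-orthogonal decomposition. The only non-formal step is the smoothness of the fibrewise pseudoinverse in Paragraph 2; this reduces to the well-known fact that for a smooth family of self-adjoint elliptic operators with kernels of locally constant dimension, the inverse on the orthogonal complement of the kernel varies smoothly with the parameter.
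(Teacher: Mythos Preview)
Your proof is correct and follows essentially the same approach as the paper: solve the equation fibrewise using the cscK Lichnerowicz theorem on each $X_b$, then glue using the constant-dimension-of-kernel hypothesis to ensure smoothness. You have simply given more detail on the smoothness step (via the modified operator $A_b = \scD^*_{\omega_b}\scD_{\omega_b} + \Pi_b$), where the paper appeals directly to the standard fact that a smooth family of self-adjoint elliptic operators with kernels of constant dimension has a smoothly varying pseudoinverse.
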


\subsection{Twisted extremal metrics}\label{sec:twisted-extremal}

We now turn to the base manifold $(B,\beta)$. We do not impose any conditions whatsoever on the K\"ahler metric $\omega_B$, but nevertheless it will be important that one can view $\omega_B$ as a ``canonical metric'' in its own right. The manner in which we do this, following \cite{stablefibrations, hashimoto} is via twisted cscK metrics. Twisted extremal metrics will also play an important role in the present work, and so we discuss these metrics at that level of generality. 

\begin{definition} Let $\zeta$ be a closed, semi-positive form on $B$. We say that $\omega_B \in \beta$ is a
\begin{enumerate}
\item \emph{twisted cscK metric} if $S(\omega_B) - \Lambda_{\omega_B}\zeta$ is constant;
\item \emph{twisted extremal metric} if $$\bar \partial \nabla^{1,0}(S(\omega_B) - \Lambda_{\omega_B}\zeta) = 0.$$
\end{enumerate}
If $\omega_B$ is a twisted extremal metric, the associated holomorphic vector field $$\nabla^{1,0}(S(\omega_B) - \Lambda_{\omega_B}\zeta)$$ is called the \emph{twisted extremal vector field}.
\end{definition}

Twisted extremal vector fields are best viewed geometrically as arising from morphisms between manifolds. Let $$q: (B,\beta) \to (\scM, \gamma_{\M})$$ be a morphism, with $\gamma_{\scM}$ a K\"ahler class on $\scM$. Then if $\zeta_{\scM} \in \gamma_{\M}$, its pullback $\zeta = q^*\zeta_{\scM}$ is a closed semi-positive form on $B$.

\begin{definition} We define the automorphism group of $q$ to be $$\Aut(q) = \{ g \in \Aut B:  q \circ g = q\}.$$ The connected component of the identity in $\Aut(q)$ is denoted $\Aut_0(q)$. The Lie algebra $\mfh_q \subset \mfh$ is defined to consist of holomorphic vector fields whose flow lies in $\Aut_0(q)$.
 \end{definition}
 
 \noindent We will assume throughout that the twisted extremal vector field lies in $\mfh_q$; it seems this is the only case of geometric interest, and will always be satisfied in our constructions.

The automorphism group of the map then gives a geometric understanding of the geometry of twisted extremal metrics. Denote by $$\Isom_0(q, \omega_B) = \Isom(\omega_B) \cap \Aut_0(q)$$ the isometry group of the map with respect to $\omega_B$. 

\begin{theorem}\label{thm:automorphisms-of-maps} Suppose $\omega_B$ is a twisted extremal metric. Then 
\begin{enumerate}
\item $\Isom_0(q, \omega_B) $ is a maximal compact subgroup of $\Aut_0(q)$;
\item if $\omega_B, \omega_B' \in \beta$ are twisted extremal metrics with the same twisted extremal vector field, then there is a $g \in \Aut_0(q)$ with $g^*\omega_B = \omega_B'.$
\end{enumerate}
\end{theorem}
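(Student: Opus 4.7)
The plan is to mirror the classical Matsushima--Lichnerowicz--Calabi approach for extremal metrics \cite{gauduchon}, adapted to account for the twist $\zeta = q^*\zeta_{\scM}$ and for the reduction from $\Aut_0(B)$ to $\Aut_0(q)$. The assumption that the twisted extremal vector field $V$ lies in $\mfh_q$ is precisely what makes this restriction compatible with the analysis. For part (ii), I would follow the strategy of Berman--Berndtsson for uniqueness of cscK metrics \cite{berman-berndtsson}, applied to a twisted modified Mabuchi functional, analogous to the twisted extremal setting handled in \cite{keller, fibrations}.

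For (i), set $\mfk_q = \mfk \cap \mfh_q$, and first identify the Lie algebra of $\Isom_0(q,\omega_B)$ with $\mfk_q$. I then show $\mfh_q = \mfk_q \oplus J\mfk_q$ by considering the restriction to $\mfh_q$ of the linearisation of the twisted extremal equation. Writing $S_\zeta = S(\omega_B) - \Lambda_{\omega_B}\zeta$, the relevant fourth-order operator is $\phi \mapsto \scD^*\scD \phi + \langle \nabla S_\zeta, \bar\partial \phi \rangle$; at a twisted extremal metric this is a real, self-adjoint elliptic operator on holomorphy potentials for $\mfh_q$, using both that $V \in \mfh_q$ and that $\zeta$ is preserved by $\Aut_0(q)$ as it is pulled back from $\scM$. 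A standard Hodge-theoretic decomposition then splits $\mfh_q$ into its real and purely imaginary parts. Maximality of $\Isom_0(q,\omega_B)$ inside $\Aut_0(q)$ follows by the classical averaging argument combined with the fact that $\mfh_q = \mfk_q \oplus J\mfk_q$ forces any compact subgroup of $\Aut_0(q)$ to be conjugate into $\Isom_0(q,\omega_B)$.

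For (ii), I would define a twisted modified Mabuchi functional $\scM_{\zeta,V}$ on the space of K\"ahler potentials for $\omega_B$, so that its critical points are precisely twisted extremal metrics in $\beta$ with twisted extremal vector field $V$. Given two twisted extremal metrics $\omega_B, \omega_B'$ with the same $V$, after pulling back $\omega_B'$ by an element of $\Aut_0(q)$ one can arrange both to be invariant under a common maximal compact subgroup $K \subset \Aut_0(q)$ using (i). I would then connect the corresponding $K$-invariant potentials by a weak $C^{1,\bar 1}$-geodesic and invoke convexity of $\scM_{\zeta,V}$ along this geodesic: convexity of the modified Mabuchi component follows from Berman--Berndtsson \cite{berman-berndtsson}, while the twisting contribution is convex because $\zeta \geq 0$ is semi-positive. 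Since both endpoints are critical points they realise the minimum along the geodesic, and the resulting equality in the convexity inequality forces the geodesic to be generated by a one-parameter subgroup of $\Aut_0(q)$, giving $\omega_B' = g^*\omega_B$ for some $g \in \Aut_0(q)$.

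The main obstacle is the strict convexity step: weak geodesics have only $C^{1,\bar 1}$-regularity, so classical differentiation of $\scM_{\zeta,V}$ along the geodesic is not available, and one must invoke the Darvas--Rubinstein framework \cite{darvas-rubinstein} (together with its extension to the twisted setting) to promote equality in the convexity inequality to the existence of a genuine automorphism in $\Aut_0(q)$, rather than merely a weak geodesic degeneration. The twist introduces no new rigidity issues, as $\zeta = q^*\zeta_{\scM}$ is automatically preserved by any one-parameter subgroup of $\Aut_0(q)$.
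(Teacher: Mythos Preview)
The paper does not give its own proof of this theorem; it is quoted from prior work, with part (i) attributed to \cite[Corollary 4.2]{fibrations} and part (ii) to Berman--Berndtsson \cite{berman-berndtsson} (with the geometric formulation in \cite[Corollary 3.8]{fibrations}). Your outline is broadly the right one and matches the arguments in those references: a Calabi-type decomposition of $\mfh_q$ via the twisted Lichnerowicz operator for (i), and convexity of a twisted modified Mabuchi functional along weak geodesics for (ii).

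One correction to your sketch of (ii): you identify the ``main obstacle'' as needing the Darvas--Rubinstein framework to upgrade equality in the convexity inequality to an automorphism. Berman--Berndtsson's argument does not use Darvas--Rubinstein. Instead, they perturb the equation (adding a small strictly positive twist, or a current along a divisor) so that the relevant functional becomes \emph{strictly} convex modulo automorphisms; uniqueness for the perturbed problem then follows, and a bifurcation/limiting argument recovers the unperturbed case. The Darvas--Rubinstein machinery is used later in the present paper for a different purpose (uniqueness of optimal symplectic connections), not here.
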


\noindent The first statement is due to the authors \cite[Corollary 4.2]{fibrations}. The uniqueness result is originally due to Keller in the case that either $\Aut(B,\beta)$ is discrete or $\eta$ is positive, with $\beta = c_1(L)$ the first Chern class of an ample line bundle \cite{keller}. In general the uniqueness statement follows from the work of Berman-Berndtsson \cite{berman-berndtsson}, and the geometric version of the uniqueness statement can be found as \cite[Corollary 3.8]{fibrations}. We will require, and hence will prove, more precise uniqueness statements in Section \ref{sec:uniqueness}. One can also find results concerning reductivity of the relevant Lie algebras of holomorphic vector fields in \cite[Theorem 4.1]{fibrations} and \cite[Proposition 7]{datar-szekelyhidi}; these will not be needed in the present work. 

We return now to the setting of Section \ref{sec:optimal}, so that $\pi: (X,\alpha) \to (B,\beta)$ is a holomorphic submersion and $\omega_X \in \alpha$ is a relatively cscK metric. 

\begin{theorem}\cite[Proposition 4.3]{morefibrations}\cite{fine2, fibrations}\label{weil-petersson-form} Let $$\zeta = \int_{X/B} \rho_{\scH} \wedge \omega_X^m.$$ Then $\zeta$ is a closed, semi-positive $(1,1)$-form on $B$, which is independent of choice of relatively cscK metric $\omega_X$ \end{theorem}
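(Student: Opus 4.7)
My plan is to reduce $\zeta$ to a combination of manifestly intrinsic closed $(1,1)$-forms on $B$ by exploiting the fibrewise cscK hypothesis, then treat semi-positivity and independence separately.

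First, I would decompose $\rho = \rho_{\scH} + \rho_{\scV} + (\text{mixed})$ and $\omega_X = (\omega_X)_{\scH} + (\omega_X)_{\scV}$. A bidegree count shows that only the $\rho_{\scH}$ and $\rho_{\scV}$ pieces contribute to the horizontal $(1,1)$, vertical $(m,m)$ part extracted by fibre integration. The fibrewise cscK identity
$$\rho_{\scV} \wedge (\omega_X)_{\scV}^{m-1} = \tfrac{\bar S}{m}(\omega_X)_{\scV}^m$$
(valid globally as $(m,m)$-vertical forms on $X$; here $\bar S$ is constant across fibres because both the fibre volume and $\int_{X_b} c_1(-K_{X_b}) \cup [\omega_b]^{m-1}$ are topological), combined with the minimal coupling equation (\ref{eqn:minimal-coupling}) and the zero-integral normalisation of $\mu^*$, then yields
$$\zeta = \int_{X/B} \rho \wedge \omega_X^m - \tfrac{\bar S}{m+1} \int_{X/B} \omega_X^{m+1}.$$
Closedness of $\zeta$ is then immediate, since both fibre integrands are closed.

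For semi-positivity, at $b \in B$ and $v \in T_b B$ with horizontal lift $\tilde v$, the formula $\zeta(v, Jv)(b) = \int_{X_b} \rho_{\scH}(\tilde v, J\tilde v)\, \omega_b^m$, together with the identity above, can be rewritten as the $L^2$-norm squared on $X_b$ of a natural fibrewise $\bar\partial$-closed tensor, morally the infinitesimal variation of the cscK complex structure on $X_b$ induced by horizontal transport along $\tilde v$. This is the Weil-Petersson interpretation of $\zeta$, and gives $\zeta(v, Jv) \geq 0$.

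For independence, let $\omega_t$ be a smooth path of relatively cscK metrics in $\alpha$ with $\dot\omega_t = \ddb \dot\psi_t$, so that $\dot\psi_t|_{X_b}$ is fibrewise a real holomorphy potential by Theorem \ref{csck-uniqueness}. Differentiating the derived formula for $\zeta_t$, the variation of $\rho_t$ equals $-\ddb \Delta_{\scV,t}\dot\psi_t$, and the variations of $\omega_t^m, \omega_t^{m+1}$ contribute the remaining terms. Using fibrewise integration by parts together with the defining property of $\dot\psi_t|_{X_b}$ as a fibrewise holomorphy potential, each piece reduces to a fibre integral of a fibrewise total derivative, which vanishes; hence $\dot\zeta_t = 0$. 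I expect this independence step to be the main obstacle, as the three variation terms must be tracked simultaneously, and the exact (rather than merely cohomological) vanishing depends essentially on the holomorphy-potential property on each cscK fibre.
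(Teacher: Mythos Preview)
Your approach is largely sound but genuinely different from the paper's. The paper does not argue pointwise at all: it simply observes that $\zeta$ is the pullback $q^*\zeta_{\scM}$ of the Weil--Petersson metric on the moduli space $\scM$ of cscK manifolds via the moduli map $q:B\to\scM$. Since $q$ depends only on the isomorphism classes of the fibres and $\zeta_{\scM}$ is an intrinsic closed semi-positive $(1,1)$-form on $\scM$, all three conclusions (closedness, semi-positivity, independence of $\omega_X$) follow in one stroke.

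Your route is more hands-on. The derived formula $\zeta = \int_{X/B}\rho\wedge\omega_X^m - \tfrac{\bar S}{m+1}\int_{X/B}\omega_X^{m+1}$ is correct and gives closedness cleanly; it also makes the cohomology class of $\zeta$ visibly independent of $\omega_X$. Your semi-positivity sketch is essentially the local Weil--Petersson computation and is fine. The weak point is your independence argument. Differentiating along a path and showing \emph{exact} (not merely cohomological) vanishing of $\dot\zeta_t$ is delicate: the three variation terms you list do not obviously collapse to fibrewise exact quantities, and ``fibrewise integration by parts together with the holomorphy-potential property'' hides the real work. In fact the Weil--Petersson interpretation you already invoke for semi-positivity does this job for free: $\zeta(v,Jv)(b)$ is the $L^2$-norm on $(X_b,\omega_b)$ of the harmonic representative of the Kodaira--Spencer class, and since the cscK metric in $\alpha_b$ is unique up to $\Aut_0(X_b)$ while the $L^2$-norm is $\Aut_0(X_b)$-invariant, form-level independence follows. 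This is the moduli-map argument unpacked, and is a much safer route than the variational computation you propose.
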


The manner in which this is proved is to show that $\zeta$ is the pullback to $B$ of the Weil-Petersson metric via the moduli map $q: B \to \M$, with $\M$ the moduli space of polarised manifolds admitting a constant scalar curvature K\"ahler metric constructed in \cite{fujiki-schumacher, moduli}. 

Thus in some sense for holomorphic submersions the most natural requirement for $\omega_B$ would be to ask that $S(\omega_B) - \Lambda_{\omega_B} \zeta$ is a constant function, with $\zeta$ as above given through the fibre integral. This is precisely the condition used in \cite{fine1, fibrations}. However, the purpose of this paper is to study optimal symplectic connections in general, without any hypotheses on $\omega_B$. Thus we rely on the following crucial result, which allows us to view an arbitrary $\omega_B$ as a twisted cscK metric with a \emph{different} twist.

\begin{proposition}\label{hashimoto}For all $j \gg 0$, there is a K\"ahler metric $\xi \in j\beta$ such that $$ S(\omega_B) - \Lambda_{\omega_B} \zeta = \Lambda_{\omega_B} \xi + c_{\Omega},$$ with $c_{\Omega}$ the appropriate topological constant.
\end{proposition}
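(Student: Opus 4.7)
The plan is to reduce the identity to a Poisson equation on $B$. Writing $n = \dim_\C B$, I would seek $\xi$ in the form $\xi = j\omega_B + \ddb \psi$ for a smooth real function $\psi$; any such $\xi$ lies in $j\beta$, and for fixed $\psi$ is Kähler once $j$ is large enough that $\ddb \psi$ is dominated pointwise by $j\omega_B$. Since $\Lambda_{\omega_B}(j\omega_B) = jn$ and $\Lambda_{\omega_B}(\ddb \psi)$ is proportional to the Laplacian $\Delta_{\omega_B}\psi$, the desired identity is equivalent to the Poisson equation
$$\Delta_{\omega_B} \psi \;=\; S(\omega_B) - \Lambda_{\omega_B} \zeta - c_\Omega - jn.$$

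The constant $c_\Omega$ should then be determined by the compatibility condition that the right-hand side integrate to zero against $\omega_B^n$. Using that $\int_B S(\omega_B)\omega_B^n = n\, c_1(B)\cdot \beta^{n-1}$ and $\int_B \Lambda_{\omega_B}\zeta \cdot \omega_B^n = n\,[\zeta]\cdot\beta^{n-1}$ depend only on cohomology, this forces
$$c_\Omega \;=\; \frac{1}{\int_B \omega_B^n}\int_B \bigl(S(\omega_B) - \Lambda_{\omega_B}\zeta\bigr)\omega_B^n \;-\; jn,$$
which is the ``appropriate topological constant'' of the statement. With this choice of $c_\Omega$ the inhomogeneity of the Poisson equation simplifies to $S(\omega_B) - \Lambda_{\omega_B}\zeta - \bar c$, where $\bar c$ denotes the mean value of $S(\omega_B) - \Lambda_{\omega_B}\zeta$, and this expression is manifestly independent of $j$.

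Standard elliptic Fredholm theory for $\Delta_{\omega_B}$ on the compact Kähler manifold $B$ then produces a smooth solution $\psi$, unique up to an additive constant, and independent of $j$. Because $\psi$ is fixed, the positivity $j\omega_B + \ddb\psi > 0$ holds pointwise as soon as $j$ exceeds a threshold depending only on $\|\psi\|_{C^2(\omega_B)}$ and $\omega_B$; for such $j$, the form $\xi := j\omega_B + \ddb\psi \in j\beta$ is the required Kähler metric. The only delicate point in the argument is arranging that a single $\psi$ serve for all $j \gg 0$ rather than needing to re-solve for each $j$; this is handled by choosing the basepoint $j\omega_B$ in $j\beta$, together with $\Lambda_{\omega_B}\omega_B = n$, so that the entire $j$-dependence of the problem is absorbed into $c_\Omega$ and positivity comes for free by taking $j$ large enough.
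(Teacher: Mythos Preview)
Your argument is correct and is precisely the standard proof: the paper itself does not give an independent argument but cites Hashimoto, whose proof is exactly the reduction to a Poisson equation for $\psi$ followed by taking $j$ large enough to ensure positivity of $j\omega_B + \ddb\psi$. The only cosmetic remark is that $c_\Omega$ does depend on $j$ (through the $-jn$ term), but this is consistent with the paper's phrasing, since ``topological'' here means determined by the cohomology classes $c_1(B),\beta,[\zeta]$ and $j$, not by the particular metrics chosen.
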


\noindent This is due to Hashimoto when $\zeta = 0$ \cite[Proposition 1]{hashimoto}; the proof in the general case is identical \cite[Proposition 2.5]{stablefibrations}.

We end our discussion with the linearisation of the operator. Let $\upsilon$ be a non-negative closed $(1,1)$-form on a compact K\"ahler manifold $Y$. For our applications, we will consider the two cases when $Y=B$ and $\upsilon = \zeta + \xi$, and when $Y=X$ and $\upsilon = \pi^* \xi.$ For a K\"ahler metric $\omega$ on $Y$, define an operator $$\scL_{\upsilon} = \scL_{\upsilon, \omega} : C^{k,\alpha}(Y,\C) \to C^{k-4,\alpha}(Y,\C)$$ by 
$$\scL_{\upsilon} = - \scD^*\scD + \frac{1}{2} \langle \nabla \Lambda_{\omega} \upsilon, \nabla \phi\rangle + \langle i\ddbar \phi, \upsilon \rangle. $$ 
\begin{theorem}\label{linearisation-base} Suppose $\omega$ is a twisted cscK metric, with twist $\upsilon$. The operator $\scL_{\upsilon}$  with $k \geq 4$, is a real fourth-order self-adjoint elliptic operator, with kernel given by the holomorphy potentials $h$ for $\omega$ such that $\| h \|_{\upsilon} =0$, where $\| \cdot \|_{\upsilon}$ is the semi-norm defined by $\upsilon$. Moreover, $\L_{\upsilon}: C^{\infty}(B,\R) \to C^{\infty}(B,\R) $ is the linearisation of the twisted scalar curvature at the twisted cscK metric $\omega$.
\end{theorem}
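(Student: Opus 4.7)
The plan is to establish each claim---the formula for the linearisation, ellipticity, realness, self-adjointness, and the kernel description---by direct computation using standard K\"ahler-geometric identities together with the twisted cscK hypothesis.

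\emph{Linearisation and basic properties.} Write $\omega_\phi = \omega + t \ddbar \phi$. The classical formula for the linearisation of scalar curvature (see e.g.\ Gauduchon \cite{gauduchon}) gives
$$\tfrac{d}{dt}\big|_{t=0} S(\omega_\phi) = -\scD^* \scD \phi + \tfrac{1}{2}\langle \nabla S(\omega), \nabla \phi \rangle,$$
while the formula $\delta g^{i\bar j} = -g^{i\bar k}(\ddbar \phi)_{k\bar l} g^{l\bar j}$ for the variation of the inverse metric yields
$$\tfrac{d}{dt}\big|_{t=0} \Lambda_{\omega_\phi} \upsilon = -\langle i\ddbar \phi, \upsilon\rangle.$$
Summing these and substituting $\nabla S(\omega) = \nabla \Lambda_\omega \upsilon$ (the gradient form of the twisted cscK equation) recovers exactly the stated formula for $\scL_\upsilon$; this proves the final assertion of the theorem. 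Ellipticity of order four is then immediate since the principal symbol coincides with that of $-\scD^* \scD$, the twist contributing only lower-order terms; realness follows because $\scL_\upsilon$ is the linearisation of the real-valued twisted scalar curvature.

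\emph{Self-adjointness.} The cleanest argument is that $\scL_\upsilon$ is the formal $L^2$-Hessian at $\omega$ of the twisted Mabuchi functional, whose $L^2$-gradient at $\omega_\phi$ is $S(\omega_\phi) - \Lambda_{\omega_\phi}\upsilon$ minus the topological average; Hessians of real-valued functionals are $L^2$-symmetric. Alternatively, one can check symmetry in $(\phi,\psi)$ directly: the term $\phi \mapsto \langle i\ddbar \phi, \upsilon\rangle$ is self-adjoint by Stokes' theorem applied to $d(\psi\,\bar\partial \phi \wedge \upsilon \wedge \omega^{n-2})$, using $d\upsilon = d\omega = 0$, and the remaining terms assemble into the linearisation of $S$, which is classically self-adjoint.

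\emph{Kernel.} The heart of the proof is the identity
$$\int_B h \cdot \scL_\upsilon h \, \frac{\omega^n}{n!} \;=\; -\|\scD h\|_{L^2}^2 \;-\; \|h\|_\upsilon^2,$$
where the semi-norm is given by
$$\|h\|_\upsilon^2 \;:=\; \int_B i\partial h \wedge \bar\partial h \wedge \upsilon \wedge \frac{\omega^{n-2}}{(n-2)!}.$$
Semi-positivity of $\upsilon$ and of the rank-one form $i\partial h \wedge \bar\partial h$ (for $h$ real) makes $\|\cdot\|_\upsilon$ a genuine non-negative semi-norm. Vanishing of the left side then forces both $\scD h = 0$ (so $h$ is a holomorphy potential) and $\|h\|_\upsilon = 0$; conversely these two conditions evidently yield $\scL_\upsilon h = 0$, giving the claimed kernel description. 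The main obstacle is establishing this identity: the two twist contributions $\tfrac{1}{2}\langle\nabla\Lambda_\omega\upsilon,\nabla h\rangle$ and $\langle i\ddbar h,\upsilon\rangle$ are individually of indefinite sign, and combining them into $-\|h\|_\upsilon^2$ requires translating $\langle i\ddbar h,\upsilon\rangle$ via the pointwise identity $\langle\alpha,\beta\rangle\omega^n/n! = (\Lambda\alpha)(\Lambda\beta)\omega^n/n! - \alpha\wedge\beta\wedge\omega^{n-2}/(n-2)!$ for real $(1,1)$-forms, then integrating by parts using $d\upsilon = d\omega = 0$; the twisted cscK identity $\nabla S = \nabla\Lambda_\omega\upsilon$ already invoked in the linearisation is essential for cancelling the trace contribution against the first-order term.
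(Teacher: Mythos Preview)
The paper does not actually give a proof of this theorem: immediately after the statement it simply records that the result was originally proved by Keller \cite{keller} and reproved by Hashimoto \cite{hashimoto} and by the authors \cite[Proposition 4.3]{fibrations}. So there is no in-paper argument to compare against; your proposal supplies what the paper leaves to the references.

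Your argument is correct and is essentially the standard one. The key integration-by-parts identity you derive,
\[
\langle h, \scL_\upsilon h\rangle_{L^2} = -\|\scD h\|_{L^2}^2 - \|h\|_\upsilon^2,
\]
is exactly the identity the paper later quotes (for general $\phi,\psi$) from \cite[Equation 3.5]{fibrations} in the proof of Lemma~\ref{lem:PI}, so your route is in line with the cited sources. One small point: the word ``evidently'' for the converse direction of the kernel description is doing some work. From $\scD h=0$ and $\|h\|_\upsilon=0$ you get $\langle h,\scL_\upsilon h\rangle=0$, not directly $\scL_\upsilon h=0$; to conclude, either invoke the bilinear version of the identity together with Cauchy--Schwarz for the semi-inner product $\langle\cdot,\cdot\rangle_\upsilon$, or note that $-\scL_\upsilon$ is a non-negative self-adjoint operator, whence its kernel coincides with the zero set of the associated quadratic form. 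Either way this is routine, but worth one sentence rather than ``evidently''.
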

\noindent This result was originally proved by Keller \cite{keller}, and later reproved in slightly more generality by Hashimoto \cite{hashimoto} and  the authors \cite[Proposition 4.3]{fibrations}. 

\begin{corollary}\cite[Proposition 3.5]{fibrations}\label{kernel-geometrically} Suppose $q: Y \to W$ is a morphism, and $\upsilon$ is the pullback of a K\"ahler metric from $W$. Then $\ker \scL_{\upsilon}$ corresponds to holomorphic vector fields $\mfh_q \subset \mfh$ whose flow lies in $\Aut_0(q)$.\end{corollary}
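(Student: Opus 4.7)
The plan is to unpack the semi-norm $\|\cdot\|_\upsilon$ appearing in Theorem \ref{linearisation-base} and identify when it vanishes on holomorphy potentials. By that theorem, a holomorphy potential $h$ lies in $\ker \scL_\upsilon$ if and only if $\|h\|_\upsilon = 0$. So the entire corollary reduces to recognising the subspace of holomorphy potentials cut out by $\|h\|_\upsilon = 0$ as $\mfh_q$.

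The semi-norm $\|\cdot\|_\upsilon$ comes from the twisted terms in the definition
$$\scL_\upsilon = -\scD^*\scD + \tfrac{1}{2}\langle \nabla \Lambda_\omega \upsilon, \nabla \phi\rangle + \langle i\ddbar \phi, \upsilon\rangle,$$
and standard integration by parts (cf.\ the pairing $\langle h, \scL_\upsilon h\rangle_{L^2}$ for $h$ a holomorphy potential, so that the $\scD^*\scD$ term drops out) identifies it, up to a positive multiple, with
$$\|h\|_\upsilon^2 \;=\; \int_Y \upsilon\bigl(\nabla^{1,0}h,\overline{\nabla^{1,0}h}\bigr)\,\tfrac{\omega^n}{n!}.$$
This is the first step I would carry out carefully, as it is the only computational ingredient, but it is essentially contained in the proof of Theorem \ref{linearisation-base}.

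Once this is in hand, the rest is functorial. Let $V = \nabla^{1,0}h$ be the real holomorphic vector field associated to $h$. Because $\upsilon = q^*\upsilon_W$ for some K\"ahler form $\upsilon_W$ on $W$, we have pointwise
$$\upsilon\bigl(V, \bar V\bigr)\big|_y \;=\; \upsilon_W\bigl(dq_y(V), \overline{dq_y(V)}\bigr) \;=\; |dq_y(V)|^2_{\upsilon_W}.$$
Since $\upsilon_W$ is positive definite on $W$, the integrand is pointwise non-negative and vanishes identically if and only if $dq(V) \equiv 0$ on $Y$. Thus $\|h\|_\upsilon = 0$ is equivalent to $V$ being tangent to every fibre of $q$, i.e.\ the flow of $V$ preserving fibres of $q$, i.e.\ $V \in \mfh_q$.

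Conversely, if $V \in \mfh_q$ then $dq(V) = 0$, so $\|h\|_\upsilon = 0$, and the corresponding $h$ lies in the kernel. Combining both directions gives the claimed identification $\ker \scL_\upsilon \cong \mfh_q$. The main (minor) obstacle is the semi-norm identification in the first paragraph; everything else is formal from the K\"ahler property of $\upsilon_W$ and the definition of $\mfh_q$.
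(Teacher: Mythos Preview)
Your proof is correct. The paper does not supply its own argument here: the corollary is stated with a citation to \cite[Proposition 3.5]{fibrations} and left unproven in the present text. Your approach---reducing via Theorem \ref{linearisation-base} to the vanishing of the semi-norm $\|h\|_\upsilon$, then using positivity of $\upsilon_W$ on $W$ to identify $\|h\|_\upsilon = 0$ with $dq(\nabla^{1,0}h) = 0$, i.e.\ with $\nabla^{1,0}h \in \mfh_q$---is exactly the natural one, and the integration-by-parts identity you invoke for the semi-norm is precisely the formula recorded later in the paper as
\[
\langle \psi, \scL_\upsilon(\phi)\rangle_{L^2(\omega)} = -\langle \scD\psi, \scD\phi\rangle_{L^2(\omega)} - \int_Y \langle \nabla\psi, \nabla\phi\rangle_\upsilon\, \omega^n
\]
(see the proof of Lemma \ref{lem:PI}). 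Specialising to $\psi = \phi = h$ with $\scD h = 0$ gives your semi-norm on the nose, so nothing is left to verify.
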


In the case when $Y=B$, the twist $\upsilon$ will in our situation of interest be K\"ahler (which can be interpreted geometrically via the identity map $(Y,\omega) \to (Y,\upsilon)$, which for example has no non-trivial automorphisms). The kernel is then simply the constants.

When $\upsilon$ is not positive (but still non-negative), the kernel of $\L_{\upsilon}$ will not necessarily consist of just the constants. To rectify this, letting $\overline{\mathfrak{h}}$ be the kernel of $\L_{\upsilon}$ at $\omega$, we will consider the operator 
$$\Psi : C^{k, \alpha}(Y) \times \overline{\mathfrak{h}} \to C^{k-4,\alpha}(Y),$$
or its analogue between Sobolev spaces, given by
$$ (\phi, h) \mapsto S \left( \omega + \ddb \phi \right) - \Lambda_{\omega + \ddb \phi} \left( \upsilon \right) - \frac{1}{2} \langle \nabla h, \nabla \phi \rangle - h.$$
Note that a zero of $\Psi$ is precisely a twisted extremal metric, by Lemma \ref{lem:changeinpot}. The linearisation at $(0, f)$ is 
\begin{align}\label{eq:linearisation}
 (\phi, h) \mapsto \scL_{\upsilon} (\phi) - \frac{1}{2} \langle \nabla \left( S(\omega) - \Lambda_{\omega} (\upsilon ) - f \right), \nabla \phi \rangle - h .
\end{align}
In particular, at a twisted extremal metric, with $f$ being the potential for the extremal vector field, the linearisation is $$ (\phi, h) \mapsto \scL_{\upsilon} (\phi) - h. $$
%
%
%

\section{Twisted extremal metrics on submersions}\label{sec:adiabatic}
Our setup throughout this Section is as follows:

\begin{enumerate}
\item $X \to B$ is a holomorphic submersion between compact complex manifolds;
\item $\alpha$ is a relatively K\"ahler class on $X$ and $\beta$ is a K\"ahler class on $B$;
\item $\omega_X \in \alpha$ is a relatively K\"ahler metric which is cscK on $X_b$ for all $b \in B$;
\item $\omega_B \in \beta$ is a K\"ahler metric on $B$;
\item $\omega_X$ is an optimal symplectic connection;
\item $\xi$ is a K\"ahler metric on $B$ such that $\xi + \zeta$ also is K\"ahler, and $\omega_B$ solves the equation $$S(\omega_B) - \Lambda_{\omega_B} (\zeta + \xi) = c_{\xi}$$ with $c_{\xi} \in \R$ and $\zeta$ the Weil-Petersson metric.
\end{enumerate}

\noindent For any K\"ahler metric $\omega_B$ on $B$,  Proposition \ref{hashimoto} produces a $\xi$ such that the twisted cscK equation of $(vi)$ holds. So this is notation, rather than a true hypothesis.

In this situation, our aim is to construct twisted extremal metrics \emph{on} $X$ \emph{itself}. We will produce such metrics in the K\"ahler class $k \beta + \alpha$ with $k \gg 0$, with twist $\pi^*\xi$. This was achieved in \cite{morefibrations} in the case $\xi = 0$, and many of the techniques are similar. There are two steps: one is to construct an approximate solution to the twisted extremal equation, and the second is to perturb the approximate solution to a genuine solution. For the first step, the main point is that one can understand the twisted scalar curvature on $X$ itself, when the twist is a pullback from the base, through the twisted scalar curvature of the base and the geometry of the fibres. For the second step, the key point is to understand the mapping properties of a right inverse of the linearised twisted extremal operator on $X$; this requires developing new techniques, in comparison with previous work on related questions.

\subsection{The approximate solution}

We begin by constructing approximately twisted extremal metrics on $X$.

\begin{proposition}\label{induction-proposition} For each $r \in \N$, there are functions $$f_1, \hdots, f_r \in C^{\infty}(B, \R), \qquad d_1, \hdots d_r \in C^{\infty}_E(X,\R), \qquad l_1, \hdots, l_r \in C^{\infty}_R(X,\R),$$ fibre holomorphy potentials $$h_1, \hdots, h_r \in C^{\infty}_E(X)$$ and a constant $c \in \R$ such that denoting $$ \phi_{k,r} = \sum_{j=1}^r f_j k^{2-j}, \quad \delta_{k,r} = \sum_{j=1}^r d_j k^{1-j}, \quad \lambda_{k,r} = \sum_{j=1}^r l_j k^{-j}, \quad \eta_{k,r} = c + \sum_{j=1}^r h_jk^{-j},$$ the K\"ahler metric $$\omega_{k,r} = k\omega_B + \omega_X + \ddb (\phi_{k,r} + \delta_{k,r} + \lambda_{k,r})$$ satisfies \begin{equation}\label{inductive-equation}S(\omega_{k,r}) - \Lambda_{\omega_{k,r}}\pi^*\xi = \eta_{k,r} + \frac{1}{2}\langle \nabla \eta_{k,r}, \nabla (\phi_{k,r} + \delta_{k,r} + \lambda_{k,r})\rangle_{\omega_k} + O(k^{-r-1}).\end{equation}
\end{proposition}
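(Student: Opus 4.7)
I would proceed by induction on $r$, constructing the approximate solution order by order in $k^{-1}$. For the base case take $\omega_{k,0} = k\omega_B + \omega_X$: Proposition~\ref{prop:expansion-scalar} gives $S(\omega_{k,0}) = S(\omega_b) + O(k^{-1})$, while $\Lambda_{\omega_{k,0}}\pi^*\xi = O(k^{-1})$ because $\pi^*\xi$ is purely horizontal and the horizontal inverse metric is of order $k^{-1}$. Setting $c = S(\omega_b)$, which is constant by the fibrewise cscK assumption, then settles the base case.

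For the inductive step, suppose the statement holds to order $r$, and let the error in~\eqref{inductive-equation} be $k^{-r-1}\Phi_{r+1} + O(k^{-r-2})$. Decompose $\Phi_{r+1} = \Phi^B + \Phi^E + \Phi^R$ according to the splitting $C^\infty(X,\R) = C^\infty(B) \oplus C^\infty_E(X) \oplus C^\infty_R(X)$. The scalings $k^{2-j}, k^{1-j}, k^{-j}, k^{-j}$ assigned respectively to $f_j, d_j, l_j, h_j$ are calibrated so that each piece of the error at order $k^{-r-1}$ is controlled by exactly one invertible linear operator. Adding $l_{r+1}k^{-r-1}$ to the potential produces $-\scD^*_\scV\scD_\scV(l_{r+1})$ at this order, and Proposition~\ref{R-invertibility} makes this invertible on $C^\infty_R(X)$, determining $l_{r+1}$ from $\Phi^R$. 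Adding $d_{r+1}k^{-r}$ produces $(p \circ \scL_1)(d_{r+1})$, since the leading Lichnerowicz term annihilates $C^\infty_E(X)$; by Proposition~\ref{rstar-right-invertibility}, $p\circ \scL_1$ is self-adjoint on $E$, so one absorbs the image part of $\Phi^E$ using $d_{r+1}$ and sets $h_{r+1}$ equal to the kernel part. Crucially, the same proposition identifies $\ker(p \circ \scL_1)$ with the space of \emph{global} holomorphy potentials on $X$, so $h_{r+1}$ is geometrically meaningful as the extremal-potential contribution.

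For the base piece $\Phi^B$, adding $f_{r+1}k^{1-r}$ contributes $\scL_{\zeta+\xi}(f_{r+1})$, where $\zeta$ is the Weil-Petersson form emerging from fibre integration (Theorem~\ref{weil-petersson-form}). Hypothesis~(vi) says $\zeta+\xi$ is K\"ahler, so by Theorem~\ref{linearisation-base} the kernel of $\scL_{\zeta+\xi}$ consists only of constants, and its image is exactly the functions of zero average. Thus $f_{r+1}$ cancels $\Phi^B$ modulo a constant; the leftover constant must vanish by integrating the whole equation against $\omega_{k,r}^{n+m}$, because the average of $S(\omega_{k,r}) - \Lambda_{\omega_{k,r}}\pi^*\xi$ at each order is topological and matches the average of the right-hand side built from the already-chosen $h_j$ and $c$.

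The principal obstacle is bookkeeping, of the sort already encountered in the adiabatic constructions of \cite{morefibrations}: one must verify carefully that the cross-term $\tfrac12\langle \nabla\eta_{k,r}, \nabla(\phi_{k,r}+\delta_{k,r}+\lambda_{k,r})\rangle_{\omega_k}$ and the nonlinearities of $\omega \mapsto S(\omega) - \Lambda_\omega\pi^*\xi$ contribute strictly at order $k^{-r-1}$ or lower in our scaling, so that only the three linear operators $\scD^*_\scV\scD_\scV$, $p\circ\scL_1$ and $\scL_{\zeta+\xi}$ appear at the stage being solved. The optimal symplectic connection hypothesis enters twice: through the vanishing of $\psi_E$ at the initial inductive step via Proposition~\ref{prop:expansion-scalar}, and more fundamentally via Proposition~\ref{rstar-right-invertibility}, without which the $E$-component equation would not be solvable.
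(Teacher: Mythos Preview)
Your overall strategy is the same as the paper's: induct on $r$, split the error at order $k^{-r-1}$ into its $C^\infty(B)$, $C^\infty_E(X)$ and $C^\infty_R(X)$ pieces, and kill each piece using $\scL_{\zeta+\xi}$, $p\circ\scL_1$ and $\scD^*_{\scV}\scD_{\scV}$ respectively. But there is a genuine gap in how you carry this out: you treat the three corrections as decoupled and perform them in the order $R\to E\to B$, while the paper does them in the order $B\to E\to R$, and this order is not optional.

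The point is that the linearised operator is not block-diagonal in the $B\oplus E\oplus R$ splitting; it is only triangular. Adding $k^{1-r}f_{r+1}$ with $f_{r+1}\in C^\infty(B)$ contributes at order $k^{-r-1}$ the full term $D_2(f_{r+1})$ of Proposition~\ref{linearisation-inductive-step}, and only its $C^\infty(B)$-component equals $-\scL_{\zeta+\xi}(f_{r+1})$; in general $D_2(f_{r+1})$ has nonzero $E$- and $R$-components, which is exactly why the paper writes, after the $B$-step, a new error $\psi'_{E,r+1}+\psi'_{R,r+1}$. Similarly, adding $k^{-r}d_{r+1}$ with $d_{r+1}\in C^\infty_E(X)$ contributes $D_1(d_{r+1})$ at order $k^{-r-1}$, whose $E$-projection is $-p\circ\scL_1(d_{r+1})$ but which also has an $R$-component (the paper's $\psi''_{R,r+1}$). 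So if you correct $R$ first, then $E$, then $B$, the final $B$-correction reintroduces $E$- and $R$-errors at the same order that you have already declared solved, and the induction fails as written. The paper's order $B\to E\to R$ respects the triangular structure: each step may pollute only components that have not yet been treated. Your remark about ``bookkeeping'' and nonlinearities does not address this; the issue is linear and present already in Proposition~\ref{linearisation-inductive-step}.

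A smaller point: the leftover constant after solving $\scL_{\zeta+\xi}(f_{r+1})=\psi_{B,r+1}+c_{r+1}$ does not vanish by an averaging argument; the paper simply carries $c_{r+1}$ along and packages it together with the fibre holomorphy potential $h_{r+1}$ on the right-hand side (this is consistent with the form of $\eta_{k,r}$, up to a harmless normalisation of the $h_j$).
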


\noindent Here the expansion is meant pointwise; however, in \cite[Section 5]{fine1}, Fine shows that these estimates improve to global $C^l$-estimates, which is important when perturbing to genuine solutions in Section \ref{sec:perturb}. 

What we mean for each $h_j$ to be a \emph{fibre holomorphy potential} is that $h_j$ is a holomorphy potential with respect to $k\omega_B + \omega_X$ (with $k$ taken such that this form is K\"ahler) for a real holomorphic vector field $v_j$ on $X$ whose flow $\rho(t)$ satisfies $\rho(t) \circ \pi = \pi$; that $h_j$ is then actually independent of $k$, as the notation suggests is justified by Lemma \ref{fibreholomorphypotential} below. Note that such $v_j$ are simply the real holomorphic vector fields on $X$ which lie in $\scV \subset TX$.

For Proposition \ref{induction-proposition} to construct ``approximate twisted extremal metrics'', by Lemma \ref{lem:changeinpot} one needs $\eta_{k}$ to be a holomorphy potential with respect to $\omega_k$. The following simple Lemma establishes this, and is implicit in \cite{morefibrations}. The Lemma is proven explicitly in the case $X$ and $B$ are projective in \cite[Proposition 3.11]{stablefibrations}; the proof given there applies also to more singular algebro-geometric fibrations.

\begin{lemma}\label{fibreholomorphypotential}
Let $h \in C^{\infty}_E(X,\R)$ be a fibre holomorphy potential. Then $h$ is a holomorphy potential on $X$ with respect to $k\omega_B + \omega_X$ for all $k$ for which the form is K\"ahler. 
\end{lemma}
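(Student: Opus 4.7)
The plan is to reduce the holomorphy potential condition to a contraction identity and exploit verticality of the associated vector field. I would begin from the standard characterisation: on a K\"ahler manifold $(Y,\omega)$, a function $h$ is a holomorphy potential iff there exists a holomorphic $(1,0)$ vector field $v$ with $\iota_v\omega = i\bar\partial h$, and non-degeneracy of $\omega$ then forces $v = \nabla^{1,0}_\omega h$ uniquely among $(1,0)$ fields. By the definition of fibre holomorphy potential recalled just before the lemma, the hypothesis supplies some $k_0$ for which $\omega_{k_0}:=k_0\pi^*\omega_B + \omega_X$ is K\"ahler and a holomorphic vector field $v$ on $X$ whose flow preserves $\pi$, with $\iota_v\omega_{k_0} = i\bar\partial h$.

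The crux is the infinitesimal form of the flow condition: preserving $\pi$ means $d\pi(v) \equiv 0$, so $v$ is a vertical vector field. This single input gives
$$\iota_v\pi^*\omega_B = \pi^*(\iota_{d\pi(v)}\omega_B) = 0,$$
i.e.\ contraction with $v$ annihilates pullbacks from $B$. Consequently, for any $k$ with $\omega_k := k\pi^*\omega_B + \omega_X$ K\"ahler,
$$\iota_v\omega_k = k\,\iota_v\pi^*\omega_B + \iota_v\omega_X = \iota_v\omega_X = \iota_v\omega_{k_0} = i\bar\partial h,$$
the third equality using the same computation at $k_0$. Since $v$ is still holomorphic, this identifies $v = \nabla^{1,0}_{\omega_k}h$ and proves $h$ is a holomorphy potential for $\omega_k$ with the \emph{same} associated vector field $v$, independent of $k$ --- which is exactly what justifies the $k$-free notation $h_j$ used in Proposition \ref{induction-proposition}.

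There is no genuine obstacle to confront here: the entire argument rests on the single observation that $\pi^*\omega_B$ dies under contraction with a vertical vector field, so the holomorphy potential equation for a vertical $v$ is manifestly insensitive to the horizontal rescaling parameter $k$. The statement that the potential itself and the resulting vector field are independent of $k$ falls out of the same one-line computation.
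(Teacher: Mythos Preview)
Your proof is correct and rests on the same underlying observation as the paper's --- that a vertical vector field does not ``see'' the base form $\pi^*\omega_B$ --- but you express this infinitesimally via the contraction identity $\iota_v\pi^*\omega_B=0$, whereas the paper works with the integrated flow $\rho(t)$ and uses $\rho(t)^*\pi^*\omega_B=\pi^*\omega_B$ together with the formula that the holomorphy potential is $\rho(t)_*\dot\phi(t)$ for $\rho(t)^*\omega_k-\omega_k=\ddb\phi(t)$. Your route is marginally more direct: the contraction characterisation $\iota_v\omega=i\bar\partial h$ immediately makes the $k$-independence visible in one line, without invoking the flow-to-potential recipe. The paper's version, on the other hand, phrases the conclusion as ``the potential has no $kh_B$ term,'' which matches the linearity-in-$k$ framing used elsewhere in the approximate-solution construction. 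Either way the content is the same, and your argument would serve perfectly well.
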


\begin{proof} The fibre holomorphy potential $h$ corresponds to a global real holomorphic vector field $v$ on $X$. The claim states that the holomorphy potential for $v$ with respect to $k\omega_B + \omega_X$ is actually independent of $k$. If not, then by linearity of the construction the holomorphy potential would be of the form $kh_B + h$. 

Let $\rho(t)$ be the flow of $h$, so that $\rho(t) \circ \pi = \pi.$ Then $$(\rho(t) \circ \pi)^*\omega_B = \pi^*\omega_B.$$

Setting $$\rho(t)^*(k\omega_B + \omega_X) - (k\omega_B + \omega_X) = \ddb \phi(t),$$ the holomorphy potential for $v$ is given by $\rho(t)_*\dot \phi(t)$ (in particular, this quantity is independent of $t$) \cite[Example 4.16]{gabor-book}. Since $$\rho(t)^*(k\omega_B + \omega_X) - (k\omega_B + \omega_X) = \rho(t)^*\omega_X - \omega_X$$ is independent of $k$, it must be the case that $h_B=0$.
\end{proof}

The proof of Proposition \ref{induction-proposition} is inductive. The starting point is the following expansion of the scalar curvature and contraction terms. Recall that $$\omega_k = k\omega_B + \omega_X,$$ $p(\theta)$ is the projection of the curvature quantity used in the definition of an optimal symplectic connection, and $S(\omega_b)$ denotes the function on $X$ whose restriction to any fibre $X_b$ is the scalar curvature of $\omega_b = \omega_X|_{X_b}$.

\begin{lemma}\label{lem:initial-expansion} We have $$S(\omega_k) - \Lambda_{\omega_k} \pi^*\xi = S(\omega_b) + k^{-1}\left(S(\omega_B) - \Lambda_{\omega_B} (\zeta + \xi) + p(\theta) + \psi_{R,1} \right) + O(k^{-2}),$$ for some $\psi_{R,1} \in C^{\infty}_R(X).$ \end{lemma}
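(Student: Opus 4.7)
The plan is to separately expand $S(\omega_k)$ and $\Lambda_{\omega_k}\pi^*\xi$ in powers of $k^{-1}$ and then combine the two. Proposition \ref{prop:expansion-scalar} already supplies the expansion of the scalar curvature,
$$S(\omega_k) = S(\omega_b) + k^{-1}(\psi_B + \psi_E + \psi_R) + O(k^{-2}),$$
with $\psi_B \in C^\infty(B)$, $\psi_E \in C_E^\infty(X)$, $\psi_R \in C_R^\infty(X)$, and moreover $\psi_E = p(\Lambda_{\omega_B}\Delta_\scV(\mu^*F_\scH) + \Lambda_{\omega_B}\rho_\scH) = p(\theta)$, which is exactly the $C_E^\infty$-term appearing in the lemma. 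Two things then remain: identify $\psi_B$ explicitly, and expand the contraction $\Lambda_{\omega_k}\pi^*\xi$.

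For the base component, I would appeal to the adiabatic computation already carried out in this setup in \cite[Proposition 4.3]{morefibrations} (compare also Theorem \ref{weil-petersson-form} and the underlying work of Fine), which identifies
$$\psi_B = S(\omega_B) - \Lambda_{\omega_B}\zeta,$$
with $\zeta = \int_{X/B}\rho_\scH \wedge \omega_X^m$ the Weil--Petersson form. Conceptually, the base piece of the $k^{-1}$-term comes from averaging horizontal curvature contributions over the fibres: the part involving only $\omega_B$ produces $S(\omega_B)$, while the fibre averaging of the relative Ricci form produces $-\Lambda_{\omega_B}\zeta$. Since this identification is already established in earlier work, I would simply cite it rather than reproduce the calculation.

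For the contraction term, $\pi^*\xi$ is a purely horizontal $(1,1)$-form, so only the horizontal block of $\omega_k^{-1}$ contributes. Using the $\omega_X$-induced splitting $TX = \scH \oplus \scV$ together with the minimal coupling equation \eqref{eqn:minimal-coupling}, the horizontal block of $\omega_k$ is $k\pi^*\omega_B + \mu^*F_\scH + \pi^*\eta$, which scales like $k$. A direct inverse-expansion of this block then gives
$$\Lambda_{\omega_k}\pi^*\xi = k^{-1}\Lambda_{\omega_B}\xi + O(k^{-2}),$$
and crucially this correction sits in $C^\infty(B) \subset C^\infty(X)$ at leading order, so it combines cleanly with $\psi_B$.

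Putting the two expansions together,
$$S(\omega_k) - \Lambda_{\omega_k}\pi^*\xi = S(\omega_b) + k^{-1}\bigl(S(\omega_B) - \Lambda_{\omega_B}(\zeta + \xi) + p(\theta) + \psi_R\bigr) + O(k^{-2}),$$
and setting $\psi_{R,1} := \psi_R \in C_R^\infty(X)$ gives the statement. The only substantive input is the identification of $\psi_B$ via the Weil--Petersson form; the rest is a routine block-inverse computation layered on top of Proposition \ref{prop:expansion-scalar}, so this is the step I expect to quote rather than to grind through.
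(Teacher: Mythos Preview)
Your proposal is correct and follows essentially the same approach as the paper: expand $S(\omega_k)$ and $\Lambda_{\omega_k}\pi^*\xi$ separately using the results of \cite{morefibrations}, then subtract. The paper cites \cite[Proposition 4.8]{morefibrations} directly for the scalar curvature expansion with $\psi_B = S(\omega_B) - \Lambda_{\omega_B}\zeta$ already identified, and \cite[Lemma 4.2]{morefibrations} for the contraction; you instead start from Proposition~\ref{prop:expansion-scalar} and then separately identify $\psi_B$, but the content is the same.
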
 

\begin{proof}This follows from the expansions established in \cite{morefibrations}. Indeed, from \cite[Proposition 4.8]{morefibrations} one has $$S(\omega_k) = S(\omega_b) + k^{-1}\left( S(\omega_B) - \Lambda_{\omega_B} \zeta + p(\theta) + \psi_{R,1} \right) + O(k^{-2}),$$ and from \cite[Lemma 4.2]{morefibrations} one has $$\Lambda_{\omega_k} \pi^*\xi = k^{-1} \Lambda_{\omega_B} \xi + O(k^{-2}).$$ Here it is important that $\pi^*\xi$ is pulled back from $B$. 
\end{proof}

\noindent By our assumptions on $\xi$ and $\omega_X$, both $S(\omega_B) - \Lambda_{\omega_B} (\zeta + \xi)$ and $p(\theta)$ are constant. Thus the non-constant $k^{-1}$-term is $\psi_{R,1}$. In order to obtain a twisted cscK metric to order $k^{-1}$, we add a potential $\phi_{R,1}\in C^{\infty}_R(X,\R)$ to $\omega_k$. This affects the contraction in the following quite trivial way.

\begin{lemma}\label{contraction-behaviour-step-1}  Let $\phi \in C^{\infty}(X,\R)$. Then $$\Lambda_{k\omega_B + \omega_X + k^{-1}\ddb \phi} \xi = k^{-1}\Lambda_{\omega_B}\xi + O(k^{-2}).$$ \end{lemma}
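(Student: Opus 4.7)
The plan is to show that the added term $k^{-1}\ddb\phi$ alters $\Lambda_{\omega}\pi^*\xi$ only at order $O(k^{-3})$, and then combine this with the unperturbed expansion $\Lambda_{\omega_k}\pi^*\xi = k^{-1}\Lambda_{\omega_B}\xi + O(k^{-2})$, which appears in the proof of Lemma~\ref{lem:initial-expansion} (from \cite[Lemma 4.2]{morefibrations}). The essential observation is that $\xi$ is pulled back from $B$, so $\pi^*\xi$ is purely horizontal in the splitting $TX = \scH \oplus \scV$ induced by $\omega_X$: it vanishes on any wedge involving a vertical vector. Consequently $\Lambda_{\omega}\pi^*\xi$ only sees horizontal entries of $\omega^{-1}$, and horizontal is precisely where $\omega_k$ is dominated by the large factor $k\omega_B$.

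Concretely, setting $\omega_k(t) = \omega_k + tk^{-1}\ddb\phi$ for $t \in [0,1]$, the standard first-order variation of the contraction yields
\begin{equation*}
\Lambda_{\tilde\omega_k}\pi^*\xi - \Lambda_{\omega_k}\pi^*\xi \;=\; -\int_0^1 \langle k^{-1}\ddb\phi,\, \pi^*\xi\rangle_{\omega_k(t)}\, dt,
\end{equation*}
where the inner product on $(1,1)$-forms is quadratic in $\omega_k(t)^{-1}$. Because $\pi^*\xi$ is purely HH, in both inverse-metric factors at least one index is forced to be horizontal. A routine block matrix inverse analysis, in a local frame adapted to $\scH \oplus \scV$, gives that the HH and HV/VH blocks of $\omega_k(t)^{-1}$ are $O(k^{-1})$ uniformly in $t \in [0,1]$ and $k \gg 0$: the HH block of $\omega_k(t)$ is dominated by $k\omega_B$, so the Schur complement is of size $k$, and its inverse of size $k^{-1}$. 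Thus each inverse-metric factor contributes $O(k^{-1})$, the full inner product is $O(k^{-2})$, and the integrand is $O(k^{-3})$; integration in $t$ preserves this bound.

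The only step that really needs checking is the uniform block estimate for $\omega_k(t)^{-1}$, and this is straightforward once one notes that for fixed $\phi$ the perturbation $k^{-1}\ddb\phi$ is negligible compared to $k\omega_B$ on horizontal vectors when $k \gg 0$. The lemma then follows by combining the $O(k^{-3})$ estimate for the difference with the unperturbed expansion, which is absorbed into the stated $O(k^{-2})$ error.
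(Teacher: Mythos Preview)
Your argument is correct. You reduce to the unperturbed expansion $\Lambda_{\omega_k}\pi^*\xi = k^{-1}\Lambda_{\omega_B}\xi + O(k^{-2})$ and then bound the effect of the $k^{-1}\ddb\phi$ perturbation via the variation formula and a block-inverse estimate, obtaining in fact an $O(k^{-3})$ correction. The block estimate works because, in the $\omega_X$-splitting $\scH\oplus\scV$, the off-diagonal blocks of $\omega_k(t)$ are $O(k^{-1})$ (since $(\omega_X)_{\scH\scV}=0$ by definition of $\scH$) and the $\scH\scH$ block is dominated by $k\omega_B$; the Schur complement computation then gives $g^{\scH\scH}=O(k^{-1})$ and $g^{\scH\scV}=O(k^{-2})$, which is even better than you claim.

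The paper takes a more elementary and direct route: it writes the contraction as a quotient of top-degree forms,
\[
\Lambda_{\omega}\pi^*\xi \;=\; (m+n)\,\frac{\pi^*\xi\wedge\omega^{m+n-1}}{\omega^{m+n}},
\]
and expands both numerator and denominator binomially in $k$, using that $\pi^*\xi$ is purely horizontal to identify the leading term as $k^{-1}\frac{\xi\wedge(\omega_X)_{\scV}^m\wedge\omega_B^{n-1}}{(\omega_X)_{\scV}^m\wedge\omega_B^n}=k^{-1}\Lambda_{\omega_B}\xi$. This treats the perturbed and unperturbed cases in one stroke and avoids any matrix analysis. Your approach has the mild advantage of isolating exactly which order the perturbation enters (namely $O(k^{-3})$, not just $O(k^{-2})$), which could be useful if one later needs the next term; the paper's approach is shorter and self-contained.
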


\begin{proof} The proof is identical to \cite[Lemma 4.2]{morefibrations}, where the case $\phi_R=0$ was considered. We briefly give the details.

One  first writes $$\Lambda_{k\omega_B + \omega_X+ k^{-1}\phi_{R,1}} \xi = (m+n)\frac{\xi \wedge (k\omega_B + \omega_X+ k^{-1}\phi_{R,1})^{m+n-1}}{(k\omega_B + \omega_X+ k^{-1}\phi_{R,1})^{m+n}},$$ then uses that $\beta$ is purely horizontal to calculate $$(m+n)\frac{\xi \wedge (k\omega_B + \omega_X+ k^{-1}\phi_{R,1})^{m+n-1}}{(k\omega_B + \omega_X+ k^{-1}\phi_{R,1})^{m+n}} = k^{-1} \frac{\xi \wedge (\omega_X)_{\V}^m\wedge \omega_B^{n-1}}{(\omega_X)_{\V}^m\wedge \omega_B^{n} } + O(k^{-2}).$$ \end{proof}

\begin{corollary}\label{cor:R1-kill}There is a function $l_1 \in C^{\infty}_R(X,\R)$ such that $$S(k\omega_B + \omega_X + k^{-1}\ddb l_1) - \Lambda_{k\omega_B + \omega_X+ k^{-1}l_1}\pi^* \xi  = c + k^{-1}h_1 + O(k^{-2}),$$ with $c_0$ and $h_1$ constants. \end{corollary}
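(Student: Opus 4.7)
The plan is to combine the expansion in Lemma \ref{lem:initial-expansion} with the hypotheses of the running setup to identify the non-constant part of the $k^{-1}$-coefficient as lying in $C^\infty_R(X,\R)$, and then to remove it using the invertibility of the vertical Lichnerowicz operator from Proposition \ref{R-invertibility}.

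First I would apply Lemma \ref{lem:initial-expansion} and analyse each summand in the $k^{-1}$-coefficient. Since every fibre $(X_b,\alpha_b)$ admits a cscK metric with the same topological scalar curvature, the function $S(\omega_b)$ on $X$ is a constant, which we call $c$. By hypothesis (vi), $S(\omega_B) - \Lambda_{\omega_B}(\zeta+\xi)$ equals the constant $c_\xi$. By hypothesis (v), $\omega_X$ is an optimal symplectic connection, so $p(\theta) = 0$. Hence
$$S(\omega_k) - \Lambda_{\omega_k}\pi^*\xi = c + k^{-1}(c_\xi + \psi_{R,1}) + O(k^{-2}),$$
and the only obstruction to an expression of the form constant-plus-$k^{-1}$-constant at this order is the function $\psi_{R,1} \in C^\infty_R(X,\R)$.

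Next I would invoke Proposition \ref{R-invertibility} to produce the unique $l_1 \in C^\infty_R(X,\R)$ with $\scD_\scV^* \scD_\scV l_1 = \psi_{R,1}$, and set $\omega_k' = \omega_k + k^{-1}\ddb l_1$. Lemma \ref{contraction-behaviour-step-1} gives $\Lambda_{\omega_k'} \pi^*\xi = \Lambda_{\omega_k}\pi^*\xi + O(k^{-2})$, so the twisted contraction is unchanged to the required order. For the scalar curvature, the linearisation of $S$ at $\omega_k$ applied to $k^{-1} l_1$ is dominated at leading order by $-k^{-1}\scD_\scV^*\scD_\scV l_1$; the remaining transport term $\tfrac{1}{2}\langle \nabla S(\omega_k), \nabla l_1\rangle_{\omega_k}$ is of strictly lower order because $S(\omega_k)$ is constant to leading order in $k$. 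Combining these, we find
$$S(\omega_k') - \Lambda_{\omega_k'}\pi^*\xi = c + k^{-1}(c_\xi + \psi_{R,1} - \scD_\scV^*\scD_\scV l_1) + O(k^{-2}) = c + k^{-1} c_\xi + O(k^{-2}),$$
which is the desired form with $h_1 = c_\xi$ a constant (and in particular a fibre holomorphy potential in the sense used in Proposition \ref{induction-proposition}).

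The main obstacle I anticipate is justifying cleanly that the leading-order contribution of the linearisation of the scalar curvature at $\omega_k$, applied to a potential scaled by $k^{-1}$ and lying fibrewise in $C^\infty_R(X,\R)$, is exactly $-\scD_\scV^*\scD_\scV$ modulo terms of order $O(k^{-2})$. This amounts to an adiabatic expansion of the Lichnerowicz operator of $\omega_k$, and should follow from the same analysis that underlies Proposition \ref{prop:expansion-scalar} in \cite{morefibrations} and the estimates of Fine in \cite{fine1}, by tracking how horizontal derivatives in $\scD^*_{\omega_k}\scD_{\omega_k}$ pick up positive powers of $k^{-1}$ through the inverse of $\omega_k$, while the vertical derivatives contribute the dominant $\scD_\scV^*\scD_\scV$ piece.
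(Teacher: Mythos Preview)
Your proposal is correct and follows essentially the same route as the paper. The only cosmetic difference is packaging: the paper invokes \cite[Proposition 4.8]{morefibrations} as a black box producing $l_1$ with $S(\omega_k + k^{-1}\ddb l_1) - S(\omega_k) = -k^{-1}\psi_{R,1} + O(k^{-2})$, whereas you unpack that step by inverting $\scD_\scV^*\scD_\scV$ via Proposition~\ref{R-invertibility} and appealing to the adiabatic expansion of the linearisation --- exactly the content later recorded as Proposition~\ref{linearisation-inductive-step}(i), so your ``main obstacle'' is already handled in the paper's framework.
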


\begin{proof} The follows from  \cite[Proposition 4.8]{morefibrations} combined with the above. Indeed,  since $\omega_X$ is cscK on each fibre, \cite[Proposition 4.8]{morefibrations} produces a function $l_1 \in C^{\infty}_R(X,\R)$ such that $$S(k\omega_B + \omega_X + k^{-1}\ddb l_1) -S(k\omega_B + \omega_X)  = k^{-1}(-\psi_{R,1}) + O(k^{-2}).$$ Lemma \ref{contraction-behaviour-step-1} provides $$\Lambda_{k\omega_B + \omega_X + k^{-1}\ddb \phi} \pi^*\xi = \Lambda_{k\omega_B + \omega_X} \pi^*\xi + O(k^{-2}),$$ giving the result. \end{proof}

Thus we have an approximately twisted cscK metric to order $k^{-1}$, with $f_1 = d_1 = 0$. Writing the $k^{-2}$ term in the expansion of \begin{equation}\label{operator-to-linearise}S(k\omega_B + \omega_X + k^{-1}\ddb l_1) - \Lambda_{k\omega_B + \omega_X+ k^{-1} l_1}\pi^* \xi$$ as $$k^{-2}(\psi_{B,2} + \psi_{E,2} + \psi_{R,2}),\end{equation} we wish to add potentials $f_2, d_2, l_2$ such that the $k^{-2}$ is the twisted extremal equation to order $k^{-2}$. Since this process happens order by order, what is important is to understand the \emph{linearisation} of the operator \begin{equation}\phi \to S(\omega_{k,1}+\ddb \phi) - \Lambda_{\omega_{k,1}+\ddb \phi}\pi^* \xi. \end{equation}

The behaviour of the linearisation changes depending on what function space $\phi$ lies in. If $\phi \in C^{\infty}(B,\R)$, then  Proposition \ref{linearisation-inductive-step} below shows that the operator acts as the linearisation of the base component of the $k^{-1}$-term, i.e. as the linearisation of $$\phi \to S(\omega_B + \ddb \phi) - \Lambda_{\omega_B+\ddb \phi}(\zeta + \xi).$$ By Theorem \ref{linearisation-base}, this is an invertible operator modulo constants, as $\omega_B$ solves the twisted cscK equation with the positive twisting form $\zeta + \xi$. This term is the only novelty in comparison with the previous work \cite{morefibrations}, where the case $\xi=0$ was considered and instead it was assumed that $\omega_B$ itself was twisted cscK with twist $\zeta$. Since in both cases one has a twisted cscK metric, an identical strategy succeeds. In fact, our situation is somewhat simpler, as $\zeta + \xi$ is positive, and therefore the kernel of the twisted Lichnerowicz operator only consists of the constants, without imposing any conditions on the automorphism group of $B$. 

When $\phi\in C^{\infty}_R(X,E)$ is instead a fibrewise holomorphy potential, then the crucial part of the linearisation is the  linearisation of the operator $p(\theta)$ of Lemma \ref{lem:initial-expansion}. As the submersion $X \to B$ may have automorphisms, this operator is \emph{not}, in general, invertible. Instead by Proposition \ref{rstar-right-invertibility}, one can, up to a function in $C^{\infty}_R (X)$, solve $$p\circ \L_1 (d_2) = \psi_{E,2} - h_2,$$ with $h_2$ a fibre holomorphy potential for a global holomorphic vector field on $X \to B$.

This allow us to manage the $C^{\infty} (B)$ and $C^{\infty}_E(X)$ components. The remaining component is now the $C^{\infty}_R(X)$-component, which is dealt with by adding a function $l_1 \in C^{\infty}_R(X)$; just as in Proposition \ref{cor:R1-kill}, when adding such a function, the operator of Equation \eqref{operator-to-linearise} acts as $-\scD^*_{\V}\scD_{\V},$ which is an isomorphism on $C^{\infty}_R(X)$. This allows us to correct the $C^{\infty}_R (X)$-component. 

More precisely, what we need is the following.

\begin{proposition}\label{linearisation-inductive-step} Let $\omega_{k,r}$ be the metric solving Equation \eqref{inductive-equation}, and denote by $\L_{k,r}$ the linearisation of the operator $$\phi \to S(\omega_{k,r}+ \ddb \phi) - \Lambda_{\omega_{k,r}+\ddb \phi} \pi^*\xi.$$ Then $\L_{k,r}$ satisfies the following properties:

\begin{enumerate}
\item there is an expansion $$\L_{k,r} = -\scD^*_{\V}\scD_{\V}(\phi) + k^{-1} D_1(\phi) + k^{-2}D_2(\phi) + O(k^{-3});$$
\item for $\phi \in C^{\infty}(B,\R)$, we have $D_1(\phi) = 0$ and $$\int_{X/B} \phi \omega_X^m = -\L_{\eta + \xi} (\phi);$$ 
\item for $\phi \in C^{\infty}_E(X,\R)$ (so $\scD^*_{\V}\scD_{\V}(\phi) = 0)$, we have $$p \circ D_1(\phi) = - p \circ \L_1(\phi).$$
\end{enumerate}

\end{proposition}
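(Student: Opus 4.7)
The plan is to differentiate the operator $\phi\mapsto S(\omega_{k,r}+\ddb\phi)-\Lambda_{\omega_{k,r}+\ddb\phi}\pi^*\xi$ and then expand each piece in powers of $k^{-1}$, along the lines of the adiabatic expansions developed in \cite[Section 4]{morefibrations}. The standard linearisation formula
\begin{align*}
D(S-\Lambda_\bullet\pi^*\xi)_\omega(\ddb\phi) = -\scD_\omega^*\scD_\omega\phi + \tfrac{1}{2}\langle\nabla S(\omega),\nabla\phi\rangle_\omega + \langle\ddb\phi,\pi^*\xi\rangle_\omega
\end{align*}
is the starting point. At $\omega_{k,r}$, Equation \eqref{inductive-equation} together with Lemma \ref{fibreholomorphypotential} ensures that $\nabla S(\omega_{k,r})$ is well-controlled at all relevant orders, as $S(\omega_{k,r})$ differs from an $\omega_{k,r}$-holomorphy potential only by an $O(k^{-r-1})$ error. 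For (i), the inverse metric $\omega_k^{-1}$ is vertical to leading order with horizontal contractions suppressed by $k^{-1}$, so $\scD_{\omega_k}^*\scD_{\omega_k}$ converges to $\scD_\V^*\scD_\V$, with horizontal and mixed corrections organised as a power series in $k^{-1}$. The correction $\omega_{k,r}-\omega_k$ is $O(k^{-1})$ and only perturbs subleading terms; the twist contribution $\langle\ddb\phi,\pi^*\xi\rangle_{\omega_k}$ is $O(k^{-2})$ since $\pi^*\xi$ is purely horizontal and both contraction factors of $\omega_k^{-1}$ supply a $k^{-1}$. Together these yield the expansion asserted in (i).

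For (ii) and (iii), observe that $\scD_\V$ vanishes on both $C^\infty(B)$ and $C^\infty_E(X)$, killing the leading operator on these subspaces. The coefficients $D_1$ and $D_2$ are then pinned down by matching $\L_{k,r}$ against the $\phi$-derivative of the asymptotic expansion of $S(\omega_{k,r}+\ddb\phi)-\Lambda_{\omega_{k,r}+\ddb\phi}\pi^*\xi$ provided by Proposition \ref{prop:expansion-scalar} and Lemma \ref{lem:initial-expansion}. In the pullback direction, the $k^{-1}$-coefficient of this expansion is constant in $\phi\in C^\infty(B)$ by the twisted cscK hypothesis (vi) of the setup, hence $D_1(\phi)=0$. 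At order $k^{-2}$, the base-direction contribution consists of the Lichnerowicz on $B$ together with the Ricci-type term arising from the fibre-integrated $\rho_\H$; by Theorem \ref{weil-petersson-form} the latter fibre-integrates to the Weil-Petersson form $\zeta$, and the twist $\pi^*\xi$ supplies the remaining $-\Lambda_{\omega_B}\xi$ term, so that the fibre-integrated $D_2$ acting on pullbacks is identified with $-\scL_{\zeta+\xi}$ of Theorem \ref{linearisation-base}. For $\phi\in C^\infty_E(X)$, the same matching principle at order $k^{-1}$ identifies $p\circ D_1$ with $-p\circ\L_1$; the twist contributes nothing at this order, entering only from $k^{-2}$ onward.

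The main obstacle is the bookkeeping needed to extract each coefficient cleanly: the Lichnerowicz of $\omega_{k,r}$ mixes horizontal and vertical directions, and the interactions between the Ricci form $\rho$, the horizontal curvature $\mu^*F_\H$, and the induction potentials $\phi_{k,r}, \delta_{k,r}, \lambda_{k,r}$ must be unfolded order by order. The untwisted analogues of these expansions have been carried out in \cite{morefibrations}; the novelty here is to verify that the twist $\pi^*\xi$, entering only through $\Lambda\pi^*\xi$ as analysed in Lemma \ref{contraction-behaviour-step-1}, contributes precisely the additional $-\Lambda_{\omega_B}\xi$ needed to promote the base operator from $\scL_\zeta$ to $\scL_{\zeta+\xi}$, while making no net contribution to the fibrewise linearisation after projection by $p$.
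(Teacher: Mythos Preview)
Your proposal is correct and follows essentially the same approach as the paper: both reduce the proof to the untwisted adiabatic expansions established in \cite[Proposition 4.11]{morefibrations} (building on Fine \cite[Section 3.3]{fine1}), and observe that the only novelty is the twist $\pi^*\xi$, which---being purely horizontal---enters the linearisation only from order $k^{-2}$ and promotes the base operator from $\scL_\zeta$ to $\scL_{\zeta+\xi}$. Your identification of the mechanism (the standard linearisation formula, the $k^{-1}$-scaling of horizontal contractions, and matching against the expansion of Lemma \ref{lem:initial-expansion}) is exactly what underlies the cited results.

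One small imprecision: your justification that $D_1(\phi)=0$ for $\phi\in C^\infty(B)$ invokes ``the twisted cscK hypothesis (vi)'', but this hypothesis is not the reason. The vanishing of $D_1$ on pullbacks is a structural fact about the scaling: adding a base potential $\ddb\phi$ to $\omega_k=k\omega_B+\omega_X$ amounts to perturbing $\omega_B$ by $k^{-1}\ddb\phi$, so its effect on the $k^{-1}$-coefficient is itself $O(k^{-1})$, hence absorbed into $D_2$ and below. (Equivalently, in the linearisation formula both $\scD^*_{\omega_k}\scD_{\omega_k}\phi$ and $\langle\nabla S(\omega_k),\nabla\phi\rangle_{\omega_k}$ are $O(k^{-2})$ for base $\phi$, the latter using only that $S(\omega_b)$ is constant.) The twisted cscK hypothesis (vi) is used not here but at the next step, to ensure that $\scL_{\zeta+\xi}$ is the linearisation at a \emph{solution} and hence has the good mapping properties of Theorem \ref{linearisation-base}. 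This does not affect the validity of your outline.
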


\begin{proof}

This is proved identically to \cite[Proposition 5.6]{fibrations} and \cite[Proposition 4.11]{morefibrations}, with parts (i) and (ii) building heavily on the lower dimensional work of Fine \cite[Section 3.3]{fine1}). The only difference with \cite[Proposition 4.11]{morefibrations} is the behaviour on functions $\phi \in C^{\infty}(B,\R)$. In the situation considered there, the operator being linearised is $$\phi \to S(\omega_{k,r} + \ddb \phi) - \eta_{k,r} - \frac{1}{2} \langle \nabla \eta_{k,r}, \nabla(\phi_{k,r} + \delta_{k,r} + \lambda_{k,r})\rangle.$$ The assumption of \cite{morefibrations} is then that $S(\omega_B) - \Lambda_{\omega_B} \eta$ is constant, and so the $C^{\infty}(B,\R)$-component of the $k^{-1}$-term is $$S(\omega_B) - \Lambda_{\omega_B} \eta.$$

In our situation, we are linearising $$\phi \to S(\omega_{k,r}+ \ddb \phi) - \Lambda_{\omega_{k,r}+\ddb \phi} \pi^*\xi,$$ and the $C^{\infty}(B,\R)$-component of the $k^{-1}$-term is $$S(\omega_B) - \Lambda_{\omega_B} (\zeta + \xi).$$ The form $\xi$ is chosen such that $S(\omega_B) - \Lambda_{\omega_B} (\zeta + \xi)$ is constant, hence we are \emph{still} working with a solution of the twisted cscK equation on the base. The key ideas used in \cite[Proposition 4.11]{morefibrations}, therefore, apply in our situation: we have a relatively cscK metric, an optimal symplectic connection, and a twisted cscK metric on the base. Thus while superficially the situations appear different, in practice the details are exactly the same. Thus the proof of \cite[Proposition 4.11]{morefibrations} goes through verbatim. 

We note that since $\omega_X$ is an optimal symplectic connection (rather than extremal symplectic connection) and $\omega_B$ is twisted cscK, in the setup of \cite[Section 4.6]{morefibrations} the functions $b_1$ and $h_1$ vanish, removing some of the technicalities.
 \end{proof}

We can now inductively prove Proposition \ref{induction-proposition}.

\begin{proof}[Proof of Proposition \ref{induction-proposition}]

We have already proved the initial step $r=1$, and thus we proceed by induction. Write \begin{align*}S(\omega_{k,r}) - \Lambda_{\omega_{k,r}} \pi^*\xi- \eta_{k,r} &- \frac{1}{2}\langle \nabla \eta_{k,r}, \nabla(\phi_{k,r} + \delta_{k,r} + \lambda_{k,r})\rangle \\ & = k^{-r-1}(\psi_{B,r+1} + \psi_{E, r+1} + \psi_{R,r+1}) + O(k^{-r-2}).\end{align*} We need to choose $f_{r+1}, d_{r+1}$ and $l_{r+1}$ in order to make the $k^{r+1}$-coefficient constant.

We begin with the $C^{\infty}(B,\R)$-term. Since $S(\omega_B) - \Lambda_{\omega_B} (\zeta + \xi)$ is constant, by Theorem \ref{linearisation-base}, the operator $\L_{\zeta + \xi}: C^{\infty}(B,\R) \to C^{\infty}(B,\R)$ is invertible modulo constants. Thus we can find a function $f_{r+1} \in C^{\infty}(B,\R)$ such that $$\L_{\zeta + \xi}(f_{r+1}) = \psi_{B,r+1} + c_{r+1},$$ with $c_{r+1}$ constant. From Proposition \ref{linearisation-inductive-step} $(ii)$, we have \begin{align*}S(\omega_{k,r}&+k^{-r+1}\ddb f_{r+1}) - \Lambda_{\omega_{k,r}+k^{-r+1}\ddb f_{r+1}} \pi^*\xi- \eta_{k,r} - \frac{1}{2}\langle \nabla \eta_{k,r}, \nabla(\phi_{k,r} + \delta_{k,r} + \lambda_{k,r})\rangle \\ & = k^{-r-1}(\psi'_{E, r+1} + \psi'_{R,r+1} + c_{r+1}) + O(k^{-r-2}),\end{align*} with $\psi'_{E, r+1} \in C^{\infty}_E(X,\R)$ and $\psi'_{R,r+1}\in C^{\infty}_R(X,\R)$.

We next turn to the $C^{\infty}_E(X,\R)$ term. The operator of interest to us, $p \circ \L_1$, is not invertible when $X \to B$ admits continuous automorphisms. Nevertheless, Proposition \ref{rstar-right-invertibility} produces a function $d_{r+1}$ and a fibre holomorphy potential $h_{r+1}$ such that $$p \circ \scL_1(d_{r+1}) + h_{r+1} =   \psi'_{E, r+1} .$$ Then by Proposition \ref{linearisation-inductive-step} $(iii)$, the K\"ahler metric  $$\omega_{k,r+1}' = \omega_{k,r}+k^{-r+1}\ddb f_{r+1} + k^{-r} \ddb d_{r+1}$$ satisfies \begin{align*}S(\omega_{k,r+1}') - \Lambda_{\omega_{k,r+1}'} \pi^*\xi-& \eta_{k,r} - \frac{1}{2}\langle \nabla \eta_{k,r}, \nabla(\phi_{k,r} + \delta_{k,r} + \lambda_{k,r})\rangle \\ & = k^{-r-1}(h_{r+1} + \psi''_{R,r+1} + c_{r+1}) + O(k^{-r-2}),\end{align*} with $\psi''_{R,r+1} \in C^{\infty}_R(X,\R).$ In order for this to be as we desire, we need that 
$$k^{-r-1}\langle \nabla h_{r+1}, \nabla(\phi_{k,r+1} + \delta_{k,r+1} + \lambda_{k,r})\rangle =O(k^{-r-2}),$$ 
and that
$$\langle \nabla \eta_{k,r}, \nabla(k^{-r+1} f_{r+1} + k^{-r} d_{r+1})\rangle =O(k^{-r-2}),$$ 
where by definition $$\phi_{k,r+1} = \phi_{k,r} + f_{r+1}k^{-r+1}, \quad \delta_{k,r+1} = \delta_{k,r} + k^{-r}d_{r+1}.$$ This is established  in \cite[p. 40]{morefibrations}.

Finally we turn to the $C^{\infty}_R(X,\R)$-term $\psi_{R,r+1}''$, which is the most straightforward step. Proposition \ref{R-invertibility} produces a function $l_{r+1}$ such that $$\scD_{\V}^*\scD_{\V}(l_{r+1}) = -\psi_{R,r+1}''.$$ Set $$\omega_{k,r+1} = \omega_{k,r}+k^{-r+1}\ddb f_{r+1} + k^{-r} \ddb d_{r+1} + k^{-r} l_{r+1}.$$ By \cite[p. 41]{morefibrations}, we have $$ \langle \nabla \eta_{k,r+1}, \nabla k^{-r-1}l_{r+1}\rangle = O(k^{-r-2}).$$ Thus, with $$\lambda_{k,r+1} = \lambda_{k,r} + k^{-r-1}l_{r+1},$$ we have that $$S(\omega_{k,r+1}) - \Lambda_{\omega_{k,r+1}} \pi^*\xi- \eta_{k,r+1} - \frac{1}{2}\langle \nabla \eta_{k,r+1}, \nabla(\phi_{k,r+1} + \delta_{k,r+1} + \lambda_{k,r+1}) \rangle $$ is $O(k^{-r-2}).$ This proves the inductive step, and hence the proof.
\end{proof}

\subsection{Perturbation}\label{sec:perturb}

Having constructed approximate twisted extremal metrics on $X$, we are now in a position to perturb them to genuine twisted extremal metrics using a quantitative version of the implicit function theorem. 

\begin{theorem}\label{thm:implicit} Let $F: B_1 \to B_2$ be a differentiable  map of Banach spaces whose derivative at $0 \in B_1$ is surjective, with right inverse $P$. Denote 
\begin{enumerate}
\item $\delta'$ the radius of the closed ball in $B$ around the origin on which $F-DF$ is Lipschitz with Lipschitz constant $(2\|P\|)^{-1},$
\item $\delta = \frac{\delta'}{2\|P\|}.$
\end{enumerate}
Then for all $y \in B_2$ with $\|y - F(0)\| < \delta$, there exists a $x\in B_2$ with $\| x \| < \delta'$ satisfying $F(x)=y$.
\end{theorem}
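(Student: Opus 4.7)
The plan is a standard contraction mapping argument on the closed ball $\overline{B(0,\delta')} \subset B_1$. Write $G(x) := F(x) - DF(0)(x)$, so that by hypothesis $G$ is $(2\|P\|)^{-1}$-Lipschitz on $\overline{B(0,\delta')}$ (the linear part $DF(0)$ does not affect the Lipschitz constant). Given $y$ with $\|y-F(0)\| < \delta$, define
$$ T: \overline{B(0,\delta')} \to B_1, \qquad T(x) := P(y - G(x)). $$
The key observation is that a fixed point of $T$ solves $F(x) = y$: applying $DF(0)$ to $x = T(x)$ and using $DF(0)\circ P = \mathrm{Id}_{B_2}$ gives $DF(0)(x) = y - G(x)$, i.e.\ $F(x) = DF(0)(x) + G(x) = y$.

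The two steps I need are then the contraction estimate and the self-map property. For the contraction, for any $x_1, x_2 \in \overline{B(0,\delta')}$,
$$\|T(x_1) - T(x_2)\| \leq \|P\|\,\|G(x_1) - G(x_2)\| \leq \|P\| \cdot (2\|P\|)^{-1} \|x_1 - x_2\| = \tfrac{1}{2}\|x_1 - x_2\|. $$
For the self-map property, first bound $\|T(0)\| = \|P(y - F(0))\| \leq \|P\|\,\|y-F(0)\| < \|P\|\,\delta = \delta'/2$, and then for any $x \in \overline{B(0,\delta')}$ combine this with the contraction estimate:
$$ \|T(x)\| \leq \|T(0)\| + \|T(x) - T(0)\| < \delta'/2 + \tfrac{1}{2}\|x\| \leq \delta'. $$
So $T$ maps the closed ball into itself and is a $\tfrac{1}{2}$-contraction.

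By the Banach fixed point theorem, $T$ has a unique fixed point $x \in \overline{B(0,\delta')}$, which satisfies $F(x) = y$. To upgrade to the strict bound $\|x\|<\delta'$, use the fixed-point identity $\|x\| = \|T(x)\| \leq \|T(0)\| + \tfrac{1}{2}\|x\|$, giving $\|x\| \leq 2\|T(0)\| < \delta'$. There is no real obstacle here: the argument is entirely routine, and the only mild subtlety is bookkeeping the strict versus non-strict inequalities so that the final conclusion $\|x\|<\delta'$ (rather than $\leq \delta'$) falls out; this is handled by the strict hypothesis $\|y-F(0)\| < \delta$.
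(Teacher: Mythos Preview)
Your proof is correct and is the standard contraction-mapping argument for this quantitative implicit function theorem. The paper itself does not prove Theorem~\ref{thm:implicit}; it simply states it as a known result, so there is nothing to compare against at the level of the theorem. The only related argument the paper gives is for the refinement in Remark~\ref{rem:implicit}, and there the approach is different: assuming Theorem~\ref{thm:implicit} already holds, they restrict to the Hilbert/Fredholm setting, pass to the orthogonal complement of $\ker DF$ so that $DF$ is invertible, and obtain the sharper radius by bounding $\|F(x)-F(0)\|$ from below via $\|DF(x)\|\geq \|P\|^{-1}\|x\|$ and the Lipschitz bound on $N=F-DF$. Your contraction argument, by contrast, proves the theorem from scratch and in fact yields the refinement of the Remark directly (your bound $\|x\|\leq 2\|T(0)\|\leq 2\|P\|\,\|y-F(0)\|$ is exactly the $\tau'\!/\tau$ statement), without any Hilbert-space or Fredholm assumption.
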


An important consequence for our main results will be the explicit bound one obtains on the distance between the genuine solution $x$ and distance to the approximation $F(0)$. 

\begin{remark}\label{rem:implicit} In fact, one obtains the following statement. Let $\tau' < \delta',$ and put $\tau = \frac{\tau'}{2\|P\|}.$ Then for all $y \in B_2$ with $\|y - F(0)\| < \tau$, there exists a $x\in B_2$ with $\| x \| < \tau'$ satisfying $F(x)=y$. 
\end{remark}
We give a short proof of Remark \ref{rem:implicit} when the map is between Hilbert spaces, and the linearised operator is Fredholm, which will be the case in our application of Theorem \ref{thm:implicit}. We may then assume $DF$ is invertible. Otherwise we replace $B_1$ with the orthogonal complement in $B_1$ to the kernel of $DF$. Since $DF$ is invertible with inverse $P$, we have $\| DF(x) \| \geq \frac{1}{\| P \|} \| x \|$ for all $x$. Thus if $N$ denotes $F - DF$, which is Lipschitz of constant $\frac{1}{2 \| P \|}$ in the ball of radius $\tau' < \delta'$, we have
\begin{align*} \| F(x) - F(0) \| &= \| N(x) - N(0) + DF(x) \| \\
& \geq \| DF (x) \| - \| N(x) - N(0) \| \\
& \geq \frac{1}{\| P \|} \| x \| - \frac{1}{2 \| P \|} \| x \| \\
&= \frac{1}{2 \| P \|} \| x \|
\end{align*}
and so the ball of radius $\tau'$ hits at least the ball of radius $\tau$, as we already know $F$ is surjective on the balls of larger radii from Theorem \ref{thm:implicit}.

The $F$ that we will consider is the map
$$ \Psi_{k,r} : L^2_{l+4} \left( X, \omega_{k,r} \right) \times \overline{ \mathfrak{h} }_{\pi} \to L^2_l \left( X, \omega_{k,r} \right)$$ 
given by 
$$ (\phi, h) \mapsto S \left( \omega_{k,r} + \ddb \phi \right) - \Lambda_{\omega_{k,r} + \ddb \phi} \left( \pi^* \xi \right) - \frac{1}{2} \langle \nabla h, \nabla \phi \rangle - h.$$
Here $\overline{\mathfrak{h}}_{\pi}$ are the potentials, with respect to $\omega_{k,r}$, for holomorphic vector fields in $\mathfrak{h}_{\pi},$ the holomorphic vector fields whose flow lies in $\Aut_0(\pi)$. The linearisation at $(0, f)$ is 
$$ (\phi, h) \mapsto  \scL_{\xi} - \langle \nabla \left( S(\omega_{k,r}) - \Lambda_{\omega_{k,r}} (\pi^* \xi ) - f \right), \nabla \phi \rangle - h ,$$
where $\scL_{\xi}$ is the $\pi^* \xi$-twisted Lichnerowicz type operator. Note that $\ker \scL_{\xi} = \overline{\mathfrak{h}}_{\pi}$ by Corollary \ref{kernel-geometrically}. At $$f = \eta_{k,r} + \frac{1}{2} \langle \nabla \left( \eta_{k,r} \right), \nabla \left( \phi_{k,r} + \lambda_{k,r} + \delta_{k,r} \right) \rangle $$  the holomorphy potential with respect to $\omega_{k,r}$ constructed in Proposition \ref{induction-proposition}, we see that the linearisation is a perturbation of the operator 
$$F_{k,r}: (\phi, h) \mapsto \scL_{\xi} (\phi) - h.$$ 
Indeed, $\omega_{k,r}$ was constructed so that $ S(\omega_{k,r}) - \Lambda_{\omega_{k,r}} (\pi^* \xi ) - f $ is $O(k^{-r-1})$, and so $F_{k,r}$ can be assumed to approximate the linearisation at any desired order, by restricting to $r$ sufficiently large (in fact $r=3$ will do).

Thus it suffices to prove the required estimates for this operator instead. From the mapping properties of $\scL_{\xi}$, it follows that $F_{k,r}$ is surjective, with right inverse $Q_{k,r}$ that sends a $\psi$ orthogonal to the kernel of $\scL_{\xi}$ to the unique $\phi$ orthogonal to the same kernel with $\scL_{\xi} (\phi) = \psi$. The actual linearisation is then also surjective for large $k$, with right inverse $P_{k,r}$. The claim that $F_{k,r}$ is surjective follows from Theorem \ref{linearisation-base}, which implies that $\scL_{\xi}$ is invertible orthogonal to its kernel, with its kernel given precisely by $\overline{\mathfrak{h}}_{\pi}$.

To apply Theorem \ref{thm:implicit}, we will need a bound on the right inverse $P = P_{k,r}$ of the linearised operator. We will show
\begin{proposition}\label{prop:rightinversebound} For each $r$ sufficiently large, there exists a $C > 0$ such that for all $k \gg 0$
\begin{align*} \| P_{k,r} (\phi) \|_{L^2_{l} (\omega_{k,r}) } \leq C k^3 \| \phi \|_{L^2_{l+4} (\omega_{k,r}) } .
\end{align*}
\end{proposition}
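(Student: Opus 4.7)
The plan is to reduce the bound on $P_{k,r}$ to the analogous bound on $Q_{k,r}$, the right inverse of the model operator $F_{k,r}(\phi,h) = \scL_{\xi}(\phi) - h$. Since $\omega_{k,r}$ satisfies the approximately twisted extremal equation to order $k^{-r-1}$ (from Proposition \ref{induction-proposition}), the difference between the true linearisation of $\Psi_{k,r}$ at $(0,f)$ and $F_{k,r}$ involves the gradient term $\langle \nabla(S(\omega_{k,r}) - \Lambda_{\omega_{k,r}} \pi^*\xi - f), \nabla\phi\rangle$ of size $O(k^{-r-1})$ in the appropriate operator norm. Taking $r$ large enough (e.g.\ $r = 3$) and applying a Neumann series, any bound of the form $\|Q_{k,r}\| \leq C k^3$ transfers to a comparable bound for $P_{k,r}$.

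To bound $Q_{k,r}$, I would use an argument by contradiction analogous to Fine's in the adiabatic limit setting. Assume no such $C$ exists; then pick a sequence $\psi_k$ with $\|\psi_k\|_{L^2_l(\omega_{k,r})} = 1$ and $\|Q_{k,r}(\psi_k)\|_{L^2_{l+4}(\omega_{k,r})} / k^3 \to \infty$, and decompose $Q_{k,r}(\psi_k) = \phi_{B,k} + \phi_{E,k} + \phi_{R,k}$ with respect to the splitting $C^{\infty}(X) = C^{\infty}(B) \oplus C^{\infty}_E(X) \oplus C^{\infty}_R(X)$. The leading behaviour of $\scL_{\xi}$ on each summand is controlled by a distinct operator from the expansion in Proposition \ref{linearisation-inductive-step}:
\begin{enumerate}
\item on $C^{\infty}_R(X)$ the leading term is $-\scD^*_{\V}\scD_{\V}$, which by Proposition \ref{R-invertibility} is invertible, giving an $O(1)$ inverse bound;
\item on $C^{\infty}_E(X)$ the leading term vanishes and the subleading $k^{-1}p\circ\L_1$ governs, which by Proposition \ref{rstar-right-invertibility} is invertible modulo $\overline{\mathfrak{h}}_{\pi}$, giving an $O(k)$ inverse bound;
\item on $C^{\infty}(B)$ both $k^0$ and $k^{-1}$ terms vanish by Proposition \ref{linearisation-inductive-step}(ii), and the first nontrivial term $k^{-2}\L_{\zeta+\xi}$, invertible modulo constants by Theorem \ref{linearisation-base} (note $\zeta+\xi$ is K\"ahler so the kernel is just the constants), yields an $O(k^2)$ inverse bound.
\end{enumerate}
The worst contribution comes from the base component; the extra factor of $k$ producing the final $O(k^3)$ bound arises from comparing the $L^2_l(\omega_{k,r})$ norm to the natural $L^2_l(\omega_B)$ norm through the degenerating volume form $\omega_{k,r}^{m+n} = k^n \omega_B^n \wedge \omega_X^m/(n!m!) + O(k^{n-1})$ and the rescaling $|\nabla_{\omega_{k,r}} \phi|_{\omega_{k,r}}^2 \approx k^{-1}|\nabla_{\omega_B}\phi|_{\omega_B}^2$ for $\phi$ pulled back from $B$.

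The main obstacle is the careful bookkeeping of the $k$-scaling of Sobolev norms, cross-terms in the three-part splitting, and the lower order operators on each component. These cross-terms couple the three components via the subleading operators $D_1, D_2$ of $\scL_{\xi}$, but since the diagonal behaviour dominates at the appropriate order, one can absorb the off-diagonal contributions using a block-triangular argument: first solve on $C^{\infty}_R(X)$, then correct on $C^{\infty}_E(X)$, then on $C^{\infty}(B)$, mirroring the inductive construction of the approximate solution in Proposition \ref{induction-proposition}. Passing to a weak limit of the rescaled $\phi_k$ then yields either a non-trivial element of $\ker \scL_{\xi}$ orthogonal to $\overline{\mathfrak{h}}_{\pi}$ (contradicting Theorem \ref{linearisation-base} together with Corollary \ref{kernel-geometrically}) or a non-trivial element in one of the lower-order kernels (contradicting Propositions \ref{R-invertibility} or \ref{rstar-right-invertibility}), completing the contradiction.
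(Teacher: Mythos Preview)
Your reduction of $P_{k,r}$ to $Q_{k,r}$ via a Neumann series is the same as the paper's final step. However, your route to bounding $Q_{k,r}$ is genuinely different from the paper's.

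The paper does \emph{not} argue by contradiction or use the three-part splitting $C^{\infty}(B)\oplus C^{\infty}_E(X)\oplus C^{\infty}_R(X)$ directly. Instead it proves an $L^2$ eigenvalue bound (Lemma \ref{lem:PI}) via the integration-by-parts identity
\[
\langle \phi, -\scL_{\xi}(\phi)\rangle_{L^2(\omega_{k,r})} = \|\scD\phi\|^2_{L^2(\omega_{k,r})} + \int_X |\nabla\phi|^2_{\xi}\,\omega_{k,r}^{m+n},
\]
drops the second (non-negative) term, and then simply \emph{cites} Fine's existing bound $\|\scD\phi\|^2 \geq Ck^{-3}\|\phi\|^2$ for $\phi\perp\ker\scD^*\scD$. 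The remaining work is to handle the discrepancy between $\ker\scD^*\scD$ and $\ker\scL_{\xi}$: for $\phi$ in the former but orthogonal to the latter, the paper shows $\phi$ is pulled back from $B$ and then uses positivity of $\xi$ together with the Poincar\'e inequality for $\omega_k$ to obtain an even better $Ck^{-2}$ bound. Finally a uniform Schauder estimate \eqref{eq:SE} lifts the $L^2$ bound to $L^2_{l+4}$.

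Your approach is closer in spirit to how Fine originally proves his $k^{-3}$ bound for $\scD^*\scD$, but adapted directly to $\scL_{\xi}$ using the operator expansion of Proposition \ref{linearisation-inductive-step}. This is plausible but more laborious: you are essentially reproving Fine's result and the kernel analysis simultaneously, and the block-triangular bookkeeping with cross-terms is exactly the delicate part that the paper sidesteps by invoking Fine's result as a black box. Your ``passing to a weak limit'' step is also vague in a setting where the metrics $\omega_{k,r}$ themselves degenerate as $k\to\infty$; one cannot take a weak limit on a fixed Hilbert space, so this would need to be replaced by more careful local analysis (as Fine does). The paper's approach buys a much shorter argument by exploiting the algebraic structure of $\scL_{\xi}$ to reduce to the untwisted case; your approach, if carried out, would be more self-contained but substantially longer.
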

This will be achieved via a lower bound for the first eigenvalue of $- \scL_{\xi}$.

\begin{lemma}\label{lem:PI} For each $r$ sufficiently large, there exists a $C > 0$ such that for all $k \gg 0$
\begin{align}\label{eq:PI} \langle \phi, - \scL_{\xi} (\phi) \rangle_{L^2 (\omega_{k,r}) } &\geq  C k^{-3} \| \phi \|_{L^2 ( \omega_{k,r} )}^2 ,
\end{align}
for all $\phi$ that are $L^2 ( \omega_{k,r} )$-orthogonal to the kernel of $\scL_{\xi}$.
\end{lemma}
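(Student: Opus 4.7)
The plan is to exploit the adiabatic expansion of $\scL_{\xi}$ together with the fibrewise decomposition
\[ C^{\infty}(X, \R) = C^{\infty}(B) \oplus C^{\infty}_E(X) \oplus C^{\infty}_R(X) \]
of Section \ref{sec:optimal}, which is orthogonal with respect to the limiting inner product $\omega_X^m \wedge \omega_B^n$. Writing $\phi = \phi_B + \phi_E + \phi_R$, the first step is to match this fixed splitting with the $\omega_{k,r}$-orthogonality condition $\phi \perp \ker \scL_{\xi} = \overline{\mathfrak{h}}_{\pi}$. Since $\overline{\mathfrak{h}}_{\pi}$ is the sum of the constants with the fibre holomorphy potentials for globally holomorphic vector fields preserving $\pi$ (these are precisely the elements of $C^{\infty}_E(X)$ on which $\scR$ vanishes, by Lemma \ref{fibreholomorphypotential} and Proposition \ref{rstar-right-invertibility}), and $\omega_{k,r}$ differs from the product-type form $k\omega_B + \omega_X$ by $O(k^{-1})$, up to $O(k^{-1})\|\phi\|$ corrections we may assume $\phi_B$ has zero integral over $B$ and $\phi_E$ is $L^2$-orthogonal to $\overline{\mathfrak{h}}_{\pi} \cap C^{\infty}_E(X)$.

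Next, arguing just as in Proposition \ref{linearisation-inductive-step} (using Lemmas \ref{lem:initial-expansion} and \ref{contraction-behaviour-step-1} to incorporate the $\pi^*\xi$-contribution, which enters only at order $k^{-1}$), one obtains an expansion
\[ -\scL_{\xi} = \scD_{\V}^* \scD_{\V} + k^{-1} M_1 + O(k^{-2}), \]
where $M_1$ restricted to $C^{\infty}(B)$ acts (after pullback) as $\scL_{\zeta + \xi}$, and $p \circ M_1$ on $C^{\infty}_E(X)$ agrees with $p \circ \scL_1$. The three diagonal estimates then read: (i) on $\phi_R$, Proposition \ref{R-invertibility} combined with continuity in $b$ yields $\langle \phi_R, \scD_{\V}^*\scD_{\V} \phi_R\rangle \geq c \|\phi_R\|_{L^2(\omega_{k,r})}^2$ uniformly in $k$; (ii) on $\phi_E$ orthogonal to $\overline{\mathfrak{h}}_{\pi} \cap C^{\infty}_E(X)$, Proposition \ref{rstar-right-invertibility} identifies $\ker(p\circ\scL_1) = \ker \scR$ and gives a uniform spectral gap, producing $\langle \phi_E, -\scL_{\xi} \phi_E\rangle \geq c k^{-1}\|\phi_E\|_{L^2(\omega_{k,r})}^2$; (iii) on $\phi_B$ of zero integral, since $\zeta + \xi$ is K\"ahler, Corollary \ref{kernel-geometrically} identifies $\ker \scL_{\zeta+\xi}$ with the constants, and Theorem \ref{linearisation-base} produces a spectral gap, giving $\langle \phi_B, -\scL_{\xi} \phi_B\rangle \geq c k^{-1}\|\phi_B\|_{L^2(\omega_{k,r})}^2$.

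The remaining step is to assemble the three diagonal bounds into a single bound on $\langle \phi, -\scL_{\xi}\phi \rangle$. The splitting is not $L^2(\omega_{k,r})$-orthogonal, so cross terms $\langle \phi_{\bullet}, -\scL_{\xi} \phi_{\star}\rangle$ for $\bullet \neq \star$ must be controlled. We estimate these via Cauchy--Schwarz combined with the diagonal bounds and absorb them using an $\epsilon$-inequality; since the worst diagonal estimate is of order $k^{-1}$ and the cross-term errors (together with the $O(k^{-1})$ mismatch between the fixed splitting and the true $\omega_{k,r}$-orthogonality to $\ker \scL_{\xi}$) cost a further factor of $k^{-2}$ to absorb, one arrives at the claimed $k^{-3}$ bound.

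The main obstacle is the bookkeeping of these cross terms and correction terms: one must verify that absorbing the off-diagonal contributions between the three components, and simultaneously correcting the non-orthogonality of the fixed splitting to $\ker \scL_{\xi}$ in the $\omega_{k,r}$-inner product, does not eat the diagonal estimate. This is the standard (but delicate) accounting typical of such adiabatic limit Poincar\'e estimates, with the key spectral inputs supplied by Propositions \ref{rstar-right-invertibility} and \ref{R-invertibility}, Theorem \ref{linearisation-base}, and the positivity of $\zeta + \xi$.
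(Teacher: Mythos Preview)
Your approach is genuinely different from the paper's, and it has both a concrete error and a gap that you have not closed.

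\textbf{The error.} Your diagonal estimate (iii) for the base component claims $\langle \phi_B, -\scL_{\xi}\phi_B\rangle \geq ck^{-1}\|\phi_B\|^2$. This is false: by Proposition \ref{linearisation-inductive-step}(ii), $D_1(\phi_B)=0$ for $\phi_B\in C^\infty(B)$, so the first nonvanishing term in the expansion of $-\scL_{\xi}\phi_B$ occurs at order $k^{-2}$, where the base component is governed by $-\scL_{\zeta+\xi}$. The correct diagonal bound on the base is therefore $k^{-2}$, not $k^{-1}$. Feeding this into your own heuristic (``worst diagonal times a further $k^{-2}$ from cross-term absorption'') would yield $k^{-4}$, not the required $k^{-3}$.

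\textbf{The gap.} The cross-term bookkeeping you describe as ``standard'' is precisely the difficulty, and you have not carried it out. The off-diagonal pairings $\langle \phi_\bullet, -\scL_{\xi}\phi_\star\rangle$ involve fourth-order differential operators applied to one of the components, so a naive Cauchy--Schwarz estimate does not give a bound in terms of $L^2$ norms alone. You would need either a careful interpolation argument or some structural reason the cross terms are small, and you have supplied neither.

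\textbf{Comparison with the paper.} The paper avoids the entire decomposition-and-cross-term strategy. It uses the integration by parts identity
\[
\langle \phi, -\scL_{\xi}\phi\rangle_{L^2(\omega_{k,r})} = \|\scD\phi\|^2_{L^2(\omega_{k,r})} + \int_X |\nabla\phi|^2_{\xi}\,\omega_{k,r}^{m+n},
\]
which is manifestly a sum of non-negative terms. For $\phi$ orthogonal to $\ker\scD^*\scD$, Fine's established eigenvalue bound \cite[Lemma 6.7]{fine1} gives $\|\scD\phi\|^2 \geq Ck^{-3}\|\phi\|^2$ directly. The remaining case, $\phi\in\ker\scD^*\scD$ but $\phi\perp\ker\scL_{\xi}$, is handled by showing such $\phi$ are necessarily pulled back from $B$ and then using positivity of $\xi$ together with the Laplacian Poincar\'e inequality to extract a $k^{-2}$ bound from the second term. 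No cross-term analysis is needed, because the quadratic form is never split across the fibrewise decomposition.
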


\begin{proof} We will use the following integration by parts formula \cite[Equation 3.5]{fibrations}, valid for any K\"ahler form $\omega$ on $X$:
\begin{align*} \langle \psi, \scL_{\xi} (\phi) \rangle_{L^2 (\omega) } =-  \langle \scD \psi, \scD \phi \rangle_{L^2 (\omega)} - \int_X \langle \nabla \psi, \nabla \phi \rangle_{\xi} \omega^n.
\end{align*}
The operators $\scD$ and $\nabla$ are with respect to $\omega$ in the above. In particular, picking $\psi = \phi$ we obtain
\begin{align*} \langle \phi, - \scL_{\xi} (\phi) \rangle_{L^2 (\omega) } &\geq  \| \scD \phi \|^2 .
\end{align*}

We apply this to $\omega = \omega_{k,r}$. By \cite[Lemma 6.7]{fine1}, there is a lower bound
\begin{align*} \| \scD \phi \|_{L^2 (\omega_{k,r} ) }^2 \geq C k^{-3} \| \scD \phi \|_{L^2 ( \omega_{k,r} )}^2 ,
\end{align*}
valid for any $\phi$ that is orthogonal to the kernel of $\scD^* \scD$. Thus, we obtain a lower bound 
\begin{align*} \langle \phi, - \scL_{\xi} (\phi) \rangle_{L^2 (\omega_{k,r} ) } &\geq  C k^{-3} \| \phi \|_{L^2 ( \omega_{k,r} )}^2 ,
\end{align*}
for such $\phi$.

This does not quite prove what we want, as the kernels of $\scD^* \scD$ and $\scL_{\xi}$ may not coincide -- the latter could be strictly contained in the former. Next we assume $\phi$ is in the kernel of $\scD^* \scD$, but still orthogonal to the kernel of $\scL_{\xi}$. Then $\phi$ is a holomorphy potential on $X$. We may assume $r=0$, as the statement for general $r$ is a perturbation of this.

Thus $\phi$ is a holomorphy potential for $\omega_k = \omega_X + k \omega_B$. Writing $\phi = \phi_X + \phi_B$, we claim that the vertical component $\phi_X$ vanishes. This will follow from the fact that $\phi_X$ is itself a holomorphy potential on $X$, and hence it is in the kernel of $\scL_{\xi}$. As $\phi$ is orthogonal to this kernel and the decomposition $\phi = \phi_X + \phi_B$ is orthogonal, it follows that $\phi_X=0$. 

To see that $\phi_X$ is a holomorphy potential, we first note that as $\phi$ and $\phi_X$ restrict to the same function on fibres, $\phi_X$ is a section of $C^{\infty}_E (X)$. Using the asymptotic expansion \ref{linearisation-inductive-step}, we see that $p \circ \L_1 (\phi_X) =0$. But then $\phi_X \in \ker \scL_{\xi},$ which is what we wanted to show.

We have
\begin{align*} \langle \phi, - \scL_{\xi} (\phi) \rangle_{L^2 (\omega_k) } =   \int_X  | \nabla \phi |^2_{\xi} \omega_k^n.
\end{align*}
Now $\pi^* \xi$ is the pullback of a positive form on $B$, and thus $$| \nabla \phi |^2_{\xi} \geq C | \nabla \phi |^2_{\omega_B} = C k^{-1} | \nabla \phi |^2_{k \omega_B}.$$ But we have $| \nabla \phi |^2_{k \omega_B} = | \nabla \phi |^2_{\left( \omega_X \right)_{\scV} + k \omega_B},$ where $\left( \omega_X \right)_{\scV} + k \omega_B$ is the product Riemannian metric on $TX = \scV \oplus \scH$, since $\phi$ is pulled back from $B$. Using the uniform equivalence of this product Riemannian metric and $\omega_k$ \cite[Lemma 6.2]{fine1}, we therefore obtain that for some possibly different constant $C$, we have
\begin{align*} \langle \phi, - \scL_{\xi} (\phi) \rangle_{L^2 (\omega_k) } \geq C k^{-1}  \| \nabla \phi \|^2_{L^2( \omega_k)} 
\end{align*}
for the $\phi$ we are considering. Using the Poincar\'e inequality for the Laplacian \cite[Lemma 6.5]{fine1}, which applies because $\phi$ in particular is orthogonal to the constants, we thus obtain for $\phi$ in the kernel of $\scD^* \scD$, but orthogonal to the kernel of $\scL_{\xi}$, there is a bound of the form
\begin{align*} \langle \phi, - \scL_{\xi} (\phi) \rangle_{L^2 (\omega_k) } \geq C k^{-2}  \|  \phi \|^2_{L^2( \omega_{k})} .
\end{align*}
Thus Equation \eqref{eq:PI} is valid for all $\phi$ orthogonal to the kernel of $\scL_{\xi}$, which is what we wanted to prove.
\end{proof}

Proposition \ref{prop:rightinversebound} will now follow by combining Lemma \ref{lem:PI} with the following Schauder estimate
\begin{align}\label{eq:SE} \| \phi \|_{L_{l+4}^2 (\omega_{k,r}) } \leq C \left(  \| \phi \|_{L^2 (\omega_{k,r}) } + \| \scL_{\xi} (\phi) \|_{L_{l}^2 (\omega_{k,r}) } \right) .
\end{align}
This estimate follows directly from the analogous estimate for $\scD^* \scD$, see \cite[Lemma 6.8]{fine1}, since the twisting form is pulled back from $B$, and hence $- \scL_{\xi} - \scD^* \scD$ is $O(k^{-1})$. 

To establish Proposition \ref{prop:rightinversebound}, we apply \eqref{eq:SE} to $\phi = Q_{k,r} (\psi).$ Note that the existence of $Q_{k,r}$ is already known from the mapping properties of $\scL_{\xi}$. Writing $$ \psi = \psi_1 + \psi_2,$$ where $\psi_2$ is in the kernel of $\scL_{\xi}$ and $\psi_1$ is $L^2( \omega_{k,r})$-orthogonal to this kernel, and similarly for $\phi$, we know $\phi_1 = Q_{k,r} (\psi_1)$ and $\phi_2 = \psi_2$. The Schauder estimate implies that
\begin{align*} \| Q_{k,r} (\psi ) \|_{L_{l+4}^2 (\omega_{k,r}) } \leq C \left(  \| Q_{k,r} (\psi_1) \|_{L^2 (\omega_{k,r}) } + \| \psi_2 \|_{L^2 (\omega_{k,r}) } + \| \psi_1 \|_{L_{l}^2 (\omega_{k,r}) } \right) .
\end{align*}
Lemma \ref{lem:PI} implies that $$\| Q_{k,r} (\psi_1) \|_{L^2 (\omega_{k,r}) } \leq C k^3 \| \psi_1 \|_{L^2 (\omega_{k,r}) },$$ and thus the result follows for $Q_{k,r}$ by the fact that $\| \psi_i \|_{L^2} \leq \| \psi \|_{L^2}$ as the $\psi_i$ are $L^2$-orthogonal. It therefore also holds for $P_{k,r}$ as the actual linearised operator is asymptotic to the negative of the Lichnerowicz type operator $\scL_{\xi}$ we have established the bounds for.

We now have the required bound for $P_{k,r}$. The final thing we need in order to apply Theorem \ref{thm:implicit} is the Lipschitz bound, which is simply a consequence of the mean value theorem.
\begin{lemma}\label{lem:lipschitz} 
Let $N_{k,r} = \Psi_{k,r} - D\Psi_{k,r}$. There exists constants $c,C > 0$ such that for all $k \gg 0$ the following holds. If $\phi_i \in L^2_{l+4} \left( X, \omega_{k,r} \right) \times \overline{\mathfrak{h}}_{\pi}$ satisfy $\| \phi_i \| \leq c,$ then 
\begin{align*} \| N_{k,r} (\phi_1) - N_{k,r} (\phi_2) \| \leq C \left( \| \phi_1 \| + \| \phi_2 \| \right) \| \phi_1 - \phi_2 \|. 
\end{align*}
\end{lemma}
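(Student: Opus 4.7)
The strategy is the standard mean value theorem argument. By construction of $N_{k,r}$ as $\Psi_{k,r}$ minus its linearisation at the basepoint, the derivative $DN_{k,r}$ vanishes at the basepoint; hence for any $\phi_1, \phi_2$ in the ball under consideration,
$$N_{k,r}(\phi_1) - N_{k,r}(\phi_2) = \int_0^1 DN_{k,r}\bigl(\phi_2 + t(\phi_1 - \phi_2)\bigr)(\phi_1 - \phi_2)\, dt.$$
The lemma therefore reduces to an operator-norm bound
$$\|DN_{k,r}(\phi)\|_{\mathrm{op}} \leq C\|\phi\|$$
for all $\phi$ in a sufficiently small ball of $L^2_{l+4}(X,\omega_{k,r}) \times \overline{\mathfrak{h}}_{\pi}$, with $C$ independent of $k \gg 0$.

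To obtain this bound I would split $\Psi_{k,r}$ into its three constituent pieces: (a) the scalar curvature $\phi \mapsto S(\omega_{k,r} + \ddb \phi)$, (b) the twist contraction $-\Lambda_{\omega_{k,r} + \ddb \phi}(\pi^*\xi)$, and (c) the coupling $-\tfrac{1}{2}\langle \nabla h, \nabla \phi\rangle - h$. Piece (c) is affine in $h$ and bilinear in $(h,\phi)$, so its non-linear part is trivially Lipschitz with constant proportional to $\|\phi\| + \|h\|$. Piece (b) is a standard second-order non-linear operator whose Taylor expansion about $\omega_{k,r}$ has coefficients that are rational in the components of $\ddb \phi$ and vanish quadratically at $\phi = 0$; its derivative therefore has operator norm bounded by $C\|\phi\|$ on small balls. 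Piece (a), the scalar curvature, is the main technical point: its Taylor expansion is a fourth-order non-linear operator whose non-linear part can be written schematically as sums of expressions involving up to four derivatives of $\phi$ multiplied by smooth functions of $\ddb \phi$ vanishing at $\phi = 0$, and the claimed bound follows from pointwise multiplication estimates once one has sufficient Sobolev regularity.

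The main obstacle is to ensure all Sobolev multiplication and embedding inequalities involved hold with constants uniform in $k$ as the metric $\omega_{k,r}$ degenerates in the adiabatic limit. This is handled exactly as in the analysis of Fine \cite{fine1} used throughout this section: by choosing $l$ sufficiently large so that $L^2_{l+4}(\omega_{k,r}) \hookrightarrow C^{4,\alpha}$ with constant uniform in $k$, all the relevant products and compositions are controlled pointwise, and the chain rule applied to each of (a)--(c) then gives the uniform bound on $DN_{k,r}$. Combined with the integral representation above, this yields the claimed Lipschitz estimate.
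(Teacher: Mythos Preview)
Your proposal is correct and follows exactly the approach the paper indicates: the paper states only that the Lipschitz bound ``is simply a consequence of the mean value theorem'' and gives no further proof, so your integral representation via $DN_{k,r}$ together with the decomposition into scalar curvature, twist, and coupling terms is precisely the intended argument, spelled out in more detail than the paper itself provides. The reference to Fine's uniform Sobolev estimates for the $k$-independence of constants is the right ingredient to complete the argument.
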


We now prove the main result of this section.
\begin{theorem}\label{thm:solutions} For each $l$ and for each $r$ sufficiently large, there exists $k_0$ such that for all $k \geq k_0$, there exists a twisted extremal metric $\widetilde \omega_k = \omega_{k,r} + \ddb \psi_{k,r}$ such that the solutions satisfy
\begin{align*} \| \psi_{k,r} \|_{L^2_{l+4}} \leq C k^{-3}.
\end{align*}
\end{theorem}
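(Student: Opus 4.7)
The plan is to apply the quantitative implicit function theorem (Theorem~\ref{thm:implicit}), in the sharper form of Remark~\ref{rem:implicit}, to $\Psi_{k,r}$ centred at $(0,f)$, where
$$f = \eta_{k,r} + \tfrac{1}{2}\langle \nabla \eta_{k,r}, \nabla(\phi_{k,r} + \lambda_{k,r} + \delta_{k,r})\rangle$$
is the holomorphy potential for $\omega_{k,r}$ produced in Proposition~\ref{induction-proposition}. A zero of $\Psi_{k,r}$ is precisely a twisted extremal metric by Lemma~\ref{lem:changeinpot}, so manufacturing a sufficiently small zero finishes the proof.

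All three ingredients required by Theorem~\ref{thm:implicit} are in place. First, Proposition~\ref{induction-proposition} (after the $C^l$-promotion of pointwise estimates noted immediately after its statement) yields the error bound
$$\|\Psi_{k,r}(0,f)\|_{L^2_l(\omega_{k,r})} \leq C_0\, k^{-r-1}.$$
Second, the linearisation of $\Psi_{k,r}$ at $(0,f)$ differs from the model operator $F_{k,r}(\phi,h) = \scL_\xi(\phi) - h$ by a term of order $O(k^{-r-1})$, so Proposition~\ref{prop:rightinversebound} yields, for all $k \gg 0$, a right inverse $P_{k,r}$ with $\|P_{k,r}\| \leq C_1 k^3$. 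Third, Lemma~\ref{lem:lipschitz} shows that the nonlinear remainder $N_{k,r} = \Psi_{k,r} - D\Psi_{k,r}$ has Lipschitz constant at most $C_2\rho$ on a ball of radius $\rho$ around $(0,f)$.

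It then remains to balance the powers of $k$. The Lipschitz hypothesis $C_2 \rho \leq (2\|P_{k,r}\|)^{-1}$ of Theorem~\ref{thm:implicit} is satisfied on a ball of radius $\delta' = c_0 k^{-3}$, for some absolute $c_0 > 0$. Applying Remark~\ref{rem:implicit} with target radius $\tau' = c_0 k^{-3}$, the corresponding source-side radius is $\tau = \tau'/(2\|P_{k,r}\|) \geq c_1 k^{-6}$. Taking $r$ sufficiently large that $r+1 > 6$, the initial error $C_0 k^{-r-1}$ is dominated by $\tau$ for all $k \gg 0$, so Remark~\ref{rem:implicit} delivers a pair $(\psi_{k,r},h_{k,r})$ solving $\Psi_{k,r}(\psi_{k,r},h_{k,r}) = 0$ with
$$\|\psi_{k,r}\|_{L^2_{l+4}(\omega_{k,r})} \leq \tau' \leq C k^{-3},$$
which is exactly the desired twisted extremal metric.

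The main obstacle is not this final gluing step, which is essentially bookkeeping of powers of $k$, but the input estimates on which it relies. The crucial one is the right-inverse bound of Proposition~\ref{prop:rightinversebound}, and through it the Poincar\'e-type inequality of Lemma~\ref{lem:PI}; the delicate point there is that $\ker\scL_\xi$ may be strictly smaller than $\ker\scD_{\V}^*\scD_{\V}$, so an element $\phi$ in the kernel of $\scD_{\V}^*\scD_{\V}$ but orthogonal to $\ker\scL_\xi$ must be shown to satisfy a lower bound coming from the twisting form, and this is where the pullback structure $\pi^*\xi$ is decisive. Once this estimate is secured, together with Proposition~\ref{induction-proposition}, the perturbation argument above delivers the theorem.
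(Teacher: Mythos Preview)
Your proof is correct and follows essentially the same route as the paper: apply the quantitative implicit function theorem (Theorem~\ref{thm:implicit} and Remark~\ref{rem:implicit}) to $\Psi_{k,r}$, using Proposition~\ref{induction-proposition} for the initial error, Proposition~\ref{prop:rightinversebound} for the right-inverse bound, and Lemma~\ref{lem:lipschitz} for the Lipschitz estimate, then balance the resulting powers of $k$. Your bookkeeping ($\delta' \sim k^{-3}$, $\tau \gtrsim k^{-6}$, hence $r+1 > 6$ suffices) matches the paper's, which records the sufficient condition as $r \geq 7$.
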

\begin{proof} Let $\delta'$ be as in the statement of Theorem \ref{thm:implicit}. Lemma \ref{lem:lipschitz} implies that there is a $c$ such that for all $\lambda > 0$ sufficiently small, $\Psi_{k,r} - D \Psi_{k,r}$ is Lipschitz of constant $c\lambda$. Thus $\delta' \geq c k^{-3}$, for some potentially different constant $c$, using Proposition \ref{prop:rightinversebound}. Pick $\tau' = ck^{-3} \leq \delta'$, and put $\tau = \frac{\tau'}{2 \| P_{k,r} \|}$. Again by Proposition \ref{prop:rightinversebound}, we have that $\tau \geq c k^{-3} \tau' = C k^{-6} $ for some new constants $c,C$. 

Using Remark \ref{rem:implicit}, we can solve $\Psi_{k,r} (\phi, h) = \Psi_{k,r}(0) + f$ for any $f$ in the ball of radius $\tau$, with $(\phi, h)$ in the ball of radius $\tau' = c k^{-3}$. In particular, then, this applies to $f$ in the ball of radius $C k^{-6} $, for some $C$. Thus to ensure that we can solve the twisted extremal equation, we need that $ \| S(\omega_{k,r}) - \Lambda_{\omega_{k,r}} (\pi^* \xi ) - h_{k,r} \|  < C k^{-6}$, which by Proposition \ref{induction-proposition} holds if $r \geq 7.$ The result follows.
\end{proof}

\noindent Thus we obtain existence of twisted extremal metrics, and also a bound on how the genuine twisted extremal metrics compare with our approximate solutions. This also implies similar bounds in H\"older norms, by taking $l$ sufficiently large.

\begin{remark} In fact, one can achieve that 
\begin{align*} \| \psi_{k,r} \| \leq C k^{-d}
\end{align*}
for any desired $d$, by increasing $r$. Retracing the argument we see that this will be achieved once $r$ is chosen to satisfy $r > d+3$. Note that this possibly changes the $k_0$ for which this expansion is valid. 
\end{remark}

\section{Results on optimal symplectic connections}\label{sec:uniqueness}

\subsection{Automorphisms and optimal symplectic connections}\label{sec:automs}

This section explains how optimal symplectic connections reflect the geometry of fibrations. Our first result is a variation on the classical Lichnerowicz-Matsushima Theorem. 

The setup requires some notation associated with the submersion $\pi: (X,\alpha) \to (B,\beta)$. Recall that $\Aut_0(\pi)\subset \Aut_0(X)$ denotes the automorphism group of $\pi$. The Lie algebra $\mfh \subset H^0(X,TX^{1,0})$ consists of holomorphic vector fields which vanish somewhere; we denote by $$\mfh_{\pi} \subset \mfh$$ the vector fields whose flow lies in $\Aut_0(\pi)$. $\mfk\subset \mfh$ is the Lie subalgebra of vector fields which correspond to Killing vector fields under the identification $TX^{1,0} \cong TX$; we denote $$\mfk_{\pi} = \mfh_{\pi} \cap \mfk$$ the holomorphic vector fields preserving $\pi$ whose associated real holomorphic vector field is Killing. Note that a vector field $\nu \in \Gamma(X, \scV)$ (with $\scV = \ker d\pi$ the vertical tangent bundle) is Killing with respect to $k g_B + g_X$ for some $k$ if and only if it is Killing for all $k$.

\begin{theorem}\label{fibration-matsushima}
Suppose $\pi: (X,\alpha) \to (B,\beta)$ admits an optimal symplectic connection. Then $$\mfh_{\pi} = \mfk_{\pi} \oplus i \mfk_{\pi}.$$ Thus $\mfh_{\pi}$ is a reductive Lie algebra. 
\end{theorem}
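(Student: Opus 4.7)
The plan is to implement the classical Lichnerowicz--Matsushima argument, with the role of the Lichnerowicz operator $\scD^*\scD$ in the cscK setting played by the operator $p \circ \scL_1$ of Proposition \ref{rstar-right-invertibility}. What makes this viable in the fibration setting are two features of $p \circ \scL_1$ established there: it is a self-adjoint elliptic operator on the bundle $E$ which is \emph{real} (commutes with complex conjugation), and its kernel is canonically identified with $\mfh_\pi$ via $\phi \mapsto \nabla_{\scV}^{1,0}\phi$.

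Complexifying $p \circ \scL_1$ to $C^\infty_E(X,\C)$, Proposition \ref{rstar-right-invertibility} identifies $\ker(p\circ \scL_1)$ with $\mfh_\pi$: a complex section $\phi$ lies in the kernel precisely when $\nabla_{\scV}^{1,0}\phi$ is a global vertical holomorphic vector field on $X$. Reality of the operator means that writing $\phi = \phi_1 + i\phi_2$ with $\phi_j \in C^\infty_E(X,\R)$, $\phi$ lies in the kernel if and only if both $\phi_1$ and $\phi_2$ do. Thus the complex kernel decomposes as $\ker_\R \oplus i \ker_\R$, where $\ker_\R := \ker(p \circ \scL_1)\cap C^\infty_E(X,\R)$.

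It remains to identify $\ker_\R$ with $\mfk_\pi$. Given real $\phi \in \ker_\R$, on each fibre $X_b$ the restriction $\phi|_{X_b}$ is a real holomorphy potential with respect to the cscK metric $\omega_b$, so it generates a vector field that is Killing on $(X_b, \omega_b)$. By Lemma \ref{fibreholomorphypotential}, this same $\phi$ is the holomorphy potential for the global vertical holomorphic vector field $\nu := \nabla_{\scV}^{1,0}\phi$ with respect to $k\omega_B + \omega_X$, for any $k$ large enough for this form to be K\"ahler. Since the potential is real and the form is genuinely K\"ahler on the compact manifold $X$, the standard fact that a holomorphic vector field with a real holomorphy potential corresponds to a Killing vector field shows $\nu \in \mfk_\pi$. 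The converse inclusion $\mfk_\pi \hookrightarrow \ker_\R$ is obtained by reversing the construction.

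Combining these, any $\nu \in \mfh_\pi$ with complex potential $\phi = \phi_1 + i\phi_2$ decomposes as $\nu = \nu_1 + i\nu_2$ with each $\nu_j \in \mfk_\pi$, yielding the direct sum $\mfh_\pi = \mfk_\pi \oplus i\mfk_\pi$; reductivity is then immediate from compactness of $\mfk_\pi$ (as the Lie algebra of the closed subgroup of $\Isom(X, k\omega_B+\omega_X)$ preserving $\pi$). The one step genuinely specific to the fibration context, and which I expect to be the main (if mild) subtlety, is the bootstrap from fibrewise Killing to globally Killing for vertical holomorphic vector fields, where Lemma \ref{fibreholomorphypotential} enters essentially by ensuring the fibre holomorphy potential also serves as a global holomorphy potential independent of $k$. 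The remainder of the argument follows the familiar Matsushima pattern verbatim.
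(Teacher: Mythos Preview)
Your proposal is correct and follows essentially the same strategy as the paper: both proofs hinge on the reality of $p\circ\scL_1$ at an optimal symplectic connection (Proposition \ref{rstar-right-invertibility}), which forces the complex kernel to split as its real part plus $i$ times its real part, together with the identification of that kernel with $\mfh_\pi$ via $\phi\mapsto\nabla_{\scV}^{1,0}\phi$.

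The only differences are cosmetic. First, the paper uses the convention (from \cite{gauduchon}) in which $\mfk$ corresponds to \emph{purely imaginary} holomorphy potentials, whereas you use the convention in which Killing fields have \emph{real} potentials; either yields $\mfh_\pi=\mfk_\pi\oplus i\mfk_\pi$. Second, the paper simply cites the standard fact that $\mfk\subset\mfh$ corresponds to imaginary functions and restricts to $\mfh_\pi$, while you take a slightly longer route through the fibrewise Killing property and then invoke Lemma \ref{fibreholomorphypotential} to pass to a global holomorphy potential. Strictly speaking, Lemma \ref{fibreholomorphypotential} is stated as ``independence of $k$'' rather than ``fibrewise potential equals global potential'', but its proof does give exactly what you need (the global holomorphy potential of a vertical holomorphic vector field restricts fibrewise to the fibrewise potential and lies in $C^\infty_E$), so your use of it is justified. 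The paper's phrasing is terser but relies on the same underlying identification.
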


\begin{proof}

Using the notation of Proposition  \ref{rstar-right-invertibility}, we have an identification \begin{align*} \ker p\circ \L_1 &= \ker \scR \cong \mfh_{\pi}, \\ f &\to \nabla_{\scV}^{1,0} f,\end{align*} where we recall the operator $\scR = \bar \partial_B \nabla^{1,0}_{\scV}$. Here $p\circ \L_1 $ is an operator on \emph{complex} valued functions: $$p\circ \L_1: C^{\infty}_E(X,\C) \to C^{\infty}_E(X,\C) .$$ Under this identification, the subspace $\mfk_{\pi}$ corresponds to purely imaginary functions, since $\mfk \subset \mfh$ corresponds to purely imaginary functions. 

By Proposition  \ref{rstar-right-invertibility}, the operator $p\circ \L_1$ is a \emph{real} operator since $\omega_X$ is an optimal symplectic connection. Thus for real functions $u, v$, we have $p\circ \L_1 (u + iv)= 0$ if and only if $$\scR(u) =\scR(v) = 0.$$ Thus $$\mfh_{\pi} = \mfk_{\pi} \oplus J \mfk_{\pi},$$ as claimed.
\end{proof}

\begin{remark}\label{rmk:projective} In the projective setting, so that $X, B$ are projective, $c_1(H) = \alpha, c_1(L) = \beta$, then there is a natural group of automorphisms of the map which linearise to $H$: $$ \Aut_0(\pi, H) = \Aut_0(\pi) \cap \Aut(X,H).$$ In this case, one sees that $$\Lie(\Aut_0(\pi, H) ) \cong \mfh_{\pi},$$ and thus $\Aut_0(\pi,H)$ is a reductive Lie group. Indeed, automorphsims $g \in \Aut_0(\pi)$ linearise to $H$ if and only if they linearise to $kL+H$ for all $k$   (using additive notation for tensor products), which is ample for $k \gg 0$, and thus the identification follows from the usual identification $$\mfh \cong \Lie(\Aut_0(X, kH+L)).$$ \end{remark}

We next consider the isometry group of an optimal symplectic connection, by which we mean \begin{equation}\Isom_0(\pi, \omega_X) = \Aut_0(\pi) \cap \Isom(X, \omega_X),\end{equation} where define $$ \Isom(X, \omega_X) = \{ g \in \Diff(X): g^*\omega_X = \omega_X\}.$$ The notation is slightly unusual since $\omega_X$ may not be positive.

\begin{theorem}\label{thm:autom-invariance}
The isometry group $\Isom_0(\pi, \omega_X) $ is a maximal compact subgroup of $\Aut_0(\pi)$.

\end{theorem}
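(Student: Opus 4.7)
The proof proceeds by reducing, via the adiabatic construction of Section \ref{sec:adiabatic}, to the analogous statement for twisted extremal metrics. Fix the K\"ahler form $\xi$ on $B$ provided by Proposition \ref{hashimoto} and let $\tilde\omega_k = \omega_{k,r} + \ddb \psi_{k,r}$ be the twisted extremal metric in $k\beta + \alpha$ with twist $\pi^*\xi$ produced by Theorem \ref{thm:solutions}, for $k \gg 0$. Regarding $\pi: X \to B$ as the morphism $q$ to which Theorem \ref{thm:automorphisms-of-maps}(i) applies, we obtain that $\Isom_0(\pi, \tilde\omega_k)$ is a maximal compact subgroup of $\Aut_0(\pi)$. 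The theorem thus reduces to showing $\Isom_0(\pi, \omega_X) = \Isom_0(\pi, \tilde\omega_k)$ for $k$ sufficiently large.

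First, $\Isom_0(\pi, \omega_X)$ is itself compact. For any $g \in \Aut_0(\pi)$, the condition $\pi \circ g = \pi$ gives $g^* \pi^*\omega_B = \pi^*\omega_B$, so $g^*\omega_X = \omega_X$ is equivalent to $g^*\omega_k = \omega_k$ for any (equivalently, all) $k$ such that $\omega_k = k\omega_B + \omega_X$ is K\"ahler; hence $\Isom_0(\pi, \omega_X)$ is a closed subgroup of the compact Lie group $\Isom(X, \omega_k)$.

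For the inclusion $K := \Isom_0(\pi, \omega_X) \subseteq \Isom_0(\pi, \tilde\omega_k)$, I would run the inductive construction of Proposition \ref{induction-proposition} $K$-equivariantly. All initial data --- $\omega_B$, $\omega_X$, and $\pi^*\xi$ (pulled back from $B$, on which $K$ acts trivially as its elements preserve fibres) --- are $K$-invariant; the splitting $C^{\infty}(X,\R) = C^{\infty}(B) \oplus C^{\infty}_E(X) \oplus C^{\infty}_R(X)$ and the operators inverted at each step (the base twisted Lichnerowicz $\scL_{\zeta+\xi}$, the vertical Lichnerowicz $\scD_{\V}^*\scD_{\V}$, and $p \circ \L_1$) are $K$-equivariant. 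Each correction is therefore determined modulo a $K$-invariant kernel and may be chosen $K$-invariantly, so inductively $\omega_{k,r}$ is $K$-invariant. The implicit function theorem perturbation $\psi_{k,r}$ is then unique in a $K$-invariant slice orthogonal to the kernel of the linearisation, and so is itself $K$-invariant, giving $K$-invariance of $\tilde\omega_k$ as required.

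For the reverse inclusion, the main step is a Lie-algebraic dimension count. The Lie algebra of $K$ consists of real holomorphic vector fields $V = \nu + \bar\nu$ with $\nu \in \mfh_\pi$ and $\mathcal{L}_V \omega_X = 0$; since $\nu$ is vertical one has $\mathcal{L}_V \omega_k = \mathcal{L}_V \omega_X$, so this Lie algebra is precisely $\mfk_\pi$. By Theorem \ref{fibration-matsushima}, $\mfh_\pi = \mfk_\pi \oplus J\mfk_\pi$, so $\dim_\R \mfk_\pi = \dim_\C \mfh_\pi$, which is the real dimension of any maximal compact subgroup of the connected complex reductive Lie group $\Aut_0(\pi)$. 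Hence $K$ is a compact subgroup of $\Isom_0(\pi, \tilde\omega_k)$ of the full maximal compact dimension, forcing $K^0 = \Isom_0(\pi, \tilde\omega_k)^0$; since maximal compact subgroups of connected complex reductive Lie groups are connected, $\Isom_0(\pi, \tilde\omega_k)$ is itself connected and equality of the full groups follows. The main obstacle will be the careful propagation of $K$-invariance through the implicit function theorem, which requires the full $K$-equivariance of the linearisation $F_{k,r}$, of its right inverse $P_{k,r}$, and of the slice in which $\psi_{k,r}$ is sought.
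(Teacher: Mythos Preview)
Your approach differs from the paper's and the reverse-inclusion step has a genuine gap. You identify $\Lie K$ with $\mfk_\pi$, but $\mfk_\pi$ by definition sits inside $\mfh_\pi$, the holomorphic vector fields \emph{vanishing somewhere}. The full Lie algebra $\isom(\pi,\omega_X)$ can be strictly larger: by the Gauduchon decomposition (cf.\ Corollary~\ref{use-for-darvas-rubinstein}) one has $\isom(\pi,\omega_X)=\mfa_\pi\oplus\tilde\mfk_\pi$, where $\mfa_\pi$ consists of parallel (nowhere-vanishing) vertical Killing fields. Your dimension formula $\dim_\R\mfk_\pi=\dim_\C\mfh_\pi=\text{real dimension of a maximal compact of }\Aut_0(\pi)$ then fails whenever $\mfa_\pi\neq 0$: for instance if the fibres are elliptic curves, then $\mfh_\pi=\mfk_\pi=0$ while $\Aut_0(\pi)$ contains the fibrewise translations, a compact complex torus which is its own maximal compact subgroup of positive dimension. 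The assertion that $\Aut_0(\pi)$ is ``complex reductive'' in the sense that guarantees connected maximal compacts and the dimension formula is also unjustified in the presence of such a torus factor; note too that $\Isom_0(\pi,\tilde\omega_k)$ is defined in the paper as an intersection $\Aut_0(\pi)\cap\Isom(X,\tilde\omega_k)$ and is not a priori connected.

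The paper's argument avoids all of this by reversing the direction of the inclusion. One first uses Lemma~\ref{lemma-conjugation} to arrange that the twisted extremal metrics $\tilde\omega_k$ all share the \emph{same} maximal compact isometry group $K\subset\Aut_0(\pi)$. Then for $g\in K$, writing $g^*\omega_X=\omega_X+\ddb\phi$ and using $g^*\tilde\omega_k=\tilde\omega_k$ together with $g^*\pi^*\omega_B=\pi^*\omega_B$, one gets
\[
\ddb\phi = g^*\bigl(\tilde\omega_k-(k\omega_B+\omega_X+\ddb\psi_{B,2})\bigr)-\bigl(\tilde\omega_k-(k\omega_B+\omega_X+\ddb\psi_{B,2})\bigr).
\]
The $C^2$ bound from Theorem~\ref{thm:solutions} gives $|\ddb\phi|_{C^2}\leq Ck^{-1}$, and since $\phi$ is independent of $k$ this forces $\ddb\phi=0$. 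Hence $K\subset\Isom_0(\pi,\omega_X)$, and maximality of $K$ gives equality. No equivariant construction, no dimension count, and no structure theory of $\Aut_0(\pi)$ beyond the existence of maximal compacts is needed. Your equivariance argument for the forward inclusion is in fact correct and is used later in the paper (in the proof of Theorem~\ref{thm:energy}), but it is not the route taken here.
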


\begin{proof}

We use the results proved in Section \ref{sec:adiabatic}. Theorem \ref{thm:solutions} proves that if $\omega_X$ is an optimal symplectic connection, the class $k\beta + \alpha$ admits a twisted extremal metric $\tilde \omega_k$ with twist $\pi^*\xi$ for all $k \gg 0$. Since $\xi$ is positive on $B$, we can appeal to the results mentioned in Section \ref{sec:prelims} to relate the geometry of these twisted extremal metrics to the geometry of the map $\pi: X \to B$. 

By Theorem \ref{thm:automorphisms-of-maps}, each $\tilde \omega_k$ is invariant under a maximal compact subgroup of $\Aut_0(\pi)$. By Lemma \ref{lemma-conjugation} proved in the subsequent section, we may assume that these are all the \emph{same} maximal compact subgroup $K \subset \Aut_0(\pi)$. 

Take some $g \in K$, and suppose $$g^*\omega_X = \omega_X + \ddb \phi.$$ We must show $\phi$ is constant. By invariance of twisted extremal metrics, $$g^*\tilde \omega_k = \tilde \omega_k.$$ Since $g \in \Aut_0(\pi)$ preserves base forms, we have $$g^*(k\omega_B + \ddb \psi_{B,2}) = k\omega_B + \ddb\psi_{B,2}.$$ It follows that $$g^*(\tilde \omega_k - (k\omega_B  + \omega_X + \ddb \psi_{B,2})) -  (\tilde \omega_k - (k\omega_B  + \omega_X + \ddb \psi_{B,2}))= \ddb \phi.$$ 

By Theorem  \ref{thm:solutions}, we have $$|\tilde \omega_k - (k\omega_B  + \omega_X + \ddb \psi_{B,2})|_{C^2} \leq C k^{-1}.$$ Thus $\phi$, which is independent of $k$, satisfies $$|\ddb \phi|_{C^2} \leq Ck^{-1},$$ and hence $\ddb \phi$ is zero and $\phi$ is constant.

This shows that $K \subset \Isom_0(\pi, \omega_X)$. Since $K$ is a \emph{maximal} compact subgroup of $\Aut_0(\pi)$, we must have $K = \Isom_0(\pi, \omega_X)$, proving the result.
\end{proof}

We will also later require a more technical result concerning the Lie algebra of the full automorphism group. For manifolds,  a good exposition of such results is contained in \cite[Section 6.3]{darvas-rubinstein}. Denote $\mfg = \Lie \Aut_0(X)$ and $\mfg_{\pi} \subset \mfg = \Lie \Aut_0(\pi)$. Let $\tilde\mfk_{\pi} $ denote the real holomorphic vector fields associated with $\mfk_{\pi}$ via the isomorphism $TX \cong TX^{1,0}$. Denote also $\mfa_{\pi}\subset \mfg_{\pi}$  the Lie subalgebra of harmonic forms $$\mfa_{\pi} = \{\nu \in \mfg_{\pi} : (kg_B + g_X)(\nu, \cdot) \textrm{ is a harmonic } 1\textrm{-form}\},$$ where $g_B, g_X$ are the tensors induced $\omega_B$ and $\omega_X$ respectively. Note $(kg_B + g_X)$ is a Riemannian metric for $k\gg 0$. Since $\nu$ is a \emph{vertical} vector field, we have $g_B(\nu, \cdot)= 0$ as harmonicity of $(kg_B + g_X)(\nu, \cdot)$ is \emph{independent} of $k$. This justifies the notation $\mfa_{\pi}$ omitting $k$.

\begin{corollary}\label{use-for-darvas-rubinstein}

Suppose $\omega_X \in \alpha$ is an optimal symplectic connection. Then \begin{align*} \isom(\pi, \omega_X) &=  \mfa_{\pi} \oplus \tilde\mfk_{\pi}, \\ \mfg &= \mfa_{\pi} \oplus \tilde\mfk_{\pi} \oplus J \tilde\mfk_{\pi},\end{align*} with $\isom(\pi, \omega_X)$ the Lie algebra of $\Isom_0(\pi, \omega_X)$.
\end{corollary}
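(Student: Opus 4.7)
The argument is the fibration analogue of the classical Calabi--Lichnerowicz--Matsushima decomposition for cscK manifolds \cite[Section 6.3]{darvas-rubinstein}, and naturally splits into three stages. First, I would observe that any $\nu \in \mfg_{\pi}$ is automatically a vertical holomorphic vector field: since the flow of $\nu$ preserves $\pi$, we have $d\pi(\nu) = 0$. Consequently the dual $1$-form $g_k(\nu, \cdot)$ with respect to $g_k = k g_B + g_X$ equals $g_X(\nu, \cdot)$ (since $g_B(\nu, Y) = g_B(\pi_*\nu, \pi_*Y) = 0$), and is in particular independent of $k$, justifying the notation $\mfa_{\pi}$ and giving the preliminary inclusion $\mfa_\pi \subset \mfg_\pi$.

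Next, I would establish the decomposition $\mfg_{\pi} = \mfa_{\pi} \oplus \mfh_{\pi}$. Fix $k \gg 0$ so that $\omega_k = k\omega_B + \omega_X$ is K\"ahler. Classical Hodge theory on the compact K\"ahler manifold $(X,\omega_k)$ decomposes any holomorphic vector field uniquely as a harmonic part plus a part with zeros. Restricting to $\mfg_{\pi}$, one needs to check that both summands of any $\nu \in \mfg_\pi$ remain vertical and $\pi$-preserving; this follows from the canonical nature of the decomposition, together with the observation that $\pi_*\nu = 0$ and that the harmonic/zero-having decomposition descends to $B$ (forcing $\pi_*\nu_a$ and $\pi_*\nu_h$ to be zero separately). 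Combining this with Theorem \ref{fibration-matsushima}, which gives $\mfh_{\pi} = \mfk_{\pi} \oplus J\mfk_{\pi}$, and using the identification $TX^{1,0} \cong TX$, yields
\begin{equation*}
\mfg_{\pi} \;=\; \mfa_{\pi} \oplus \tilde{\mfk}_{\pi} \oplus J\tilde{\mfk}_{\pi}.
\end{equation*}

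For the isometry statement, I would show both inclusions directly. For $\mfa_{\pi} \oplus \tilde{\mfk}_{\pi} \subset \isom(\pi, \omega_X)$: a vector field $\nu \in \tilde{\mfk}_{\pi}$ is Killing for $g_k$ by definition, hence $L_\nu \omega_k = 0$; since $\nu$ is vertical, $L_\nu(\pi^*\omega_B) = 0$, so subtraction gives $L_\nu \omega_X = 0$. For $\nu \in \mfa_{\pi}$, the harmonicity of the dual $1$-form forces $\iota_\nu \omega_k$ to be closed, hence $\nu$ is Killing for $\omega_k$ (standard K\"ahler identity for harmonic holomorphic vector fields), and the same verticality argument yields $L_\nu \omega_X = 0$. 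For the reverse inclusion, I would invoke Theorem \ref{thm:autom-invariance}: $\Isom_0(\pi, \omega_X)$ is a maximal compact subgroup of $\Aut_0(\pi)$. The subalgebra $\mfa_{\pi} \oplus \tilde{\mfk}_{\pi}$ integrates to a compact subgroup (a product of an Albanese-type torus with the Killing part), while the complementary $J\tilde{\mfk}_{\pi}$ generates non-compact directions in the reductive algebra $\mfg_{\pi}$. By maximality of $\isom(\pi, \omega_X)$, one concludes $\isom(\pi, \omega_X) = \mfa_{\pi} \oplus \tilde{\mfk}_{\pi}$.

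The main obstacle is the second stage, particularly the verification that the Hodge decomposition on $(X, \omega_k)$ restricts cleanly to the subalgebra $\mfg_{\pi}$ of vertical vector fields, i.e.\ that the harmonic and zero-having components of a vertical holomorphic vector field are themselves vertical. One route is the descent argument sketched above via $\pi_*$; an alternative, perhaps more transparent, route is to work fiberwise using the cscK structure on each $(X_b, \omega_b)$ and the smoothness hypothesis that $\dim \mfh_b$ is independent of $b \in B$, which guarantees that the fiberwise harmonic parts glue to a smooth global vector field.
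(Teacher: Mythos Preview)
Your overall strategy matches the paper's: combine the Gauduchon decomposition $\nu = \nu_H + \nabla\phi + J\nabla\psi$ of real holomorphic vector fields \cite[Lemma 2.1.1]{gauduchon} with Theorem~\ref{fibration-matsushima}. The paper's proof is considerably shorter, simply citing that lemma and observing that for vertical $\nu$ the dual one-form $g_k(\nu,\cdot)=g_X(\nu,\cdot)$ is independent of $k$, and that the gradients appearing may be taken to be \emph{vertical} gradients with respect to $g_X$; both statements in the corollary then drop out simultaneously, since Gauduchon's lemma already characterises which part of the decomposition is Killing.

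Your descent argument in stage two has a genuine gap. You write that ``the harmonic/zero-having decomposition descends to $B$, forcing $\pi_*\nu_a$ and $\pi_*\nu_h$ to be zero separately,'' but $\pi_*$ of an arbitrary holomorphic vector field on $X$ is not defined as a vector field on $B$: there is no reason $\nu_a$ or $\nu_h$ should be $\pi$-projectable, which is precisely the verticality you are trying to establish. The paper sidesteps this by working directly with the one-form $\iota_{\nu^{1,0}}\omega_k=\iota_{\nu^{1,0}}\omega_X$ and asserting that the potential part can be written using the vertical gradient $\nabla_{\scV}$; your alternative ``fibrewise'' suggestion is closer to this, though neither you nor the paper spells out the details fully.

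For the isometry statement you take a slightly different route, invoking Theorem~\ref{thm:autom-invariance} (maximal compactness of $\Isom_0(\pi,\omega_X)$) to rule out the $J\tilde\mfk_\pi$ directions. This is a legitimate alternative: the paper instead gets $\isom(\pi,\omega_X)=\mfa_\pi\oplus\tilde\mfk_\pi$ for free from Gauduchon's lemma, since that result already identifies the Killing real holomorphic vector fields as those with vanishing $\nabla\phi$-component. Your route is more circuitous but has the advantage of making explicit how Theorem~\ref{thm:autom-invariance} feeds into the Darvas--Rubinstein machinery used later.
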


\begin{proof} 

This follows immediately from Theorem \ref{fibration-matsushima} and \cite[Lemma 2.1.1]{gauduchon}, which decomposes an arbitrary real holomorphic vector field as a sum $$\nu = \nu_{H} + \nabla \phi + J \nabla \psi,$$ with $\nu_H$ dual to a harmonic form and $\phi, \psi$ real-valued functions. This decomposition depends on a choice of K\"ahler metric $k\omega_X + \omega_B$, and its associated Riemannian metric. However as we are considering vertical vector fields, as mentioned above harmonicity with respect to $g_X + kg_B$ is independent of $k$. Similarly, the for vertical vector fields these gradients can be taken to be vertical gradients using $g_X$, giving the result.
\end{proof}

\subsection{Interlude on twisted extremal metrics}

We return to the setting and notation of Section \ref{sec:twisted-extremal}, so $\omega \in \beta$ is a twisted extremal metric on a morphism $q: Y \to W$ between K\"ahler manifolds with $Y$ compact, and with twisting form $\zeta$ on $Y$ the pullback of a K\"ahler metric on $W$. In practice, we we will apply our results to the morphism $\pi: X \to B$. Thus by Theorem \ref{thm:automorphisms-of-maps}, the isometry group $\Isom_0(q)$ is a maximal compact subgroup of $\Aut_0(q)$. We require:

\begin{lemma}\label{lemma-conjugation} Suppose $K$ is a maximal compact subgroup of $\Aut_0(q)$. Then there is a twisted extremal metric $g^*\omega \in \beta$ such that $\Isom_0(q, g^*\omega) = K,$ for some $g \in \Aut_0(\pi)$.
\end{lemma}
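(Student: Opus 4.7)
The plan is to reduce the statement to the standard Lie-theoretic fact that any two maximal compact subgroups of a connected Lie group are conjugate (Cartan--Iwasawa--Malcev). Set $K' := \Isom_0(q, \omega)$; by Theorem \ref{thm:automorphisms-of-maps}, this is itself a maximal compact subgroup of the connected Lie group $\Aut_0(q)$. Since $K$ is also maximal compact in the same group by hypothesis, there exists $g \in \Aut_0(q)$ with $g^{-1} K' g = K$.

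I would then verify that $g^*\omega$ does the job. First, $g^*\omega$ lies in $\beta$, as $g$ acts trivially on cohomology (being in the identity component), and $g^*\omega$ is still a twisted extremal metric: since $g \in \Aut_0(q)$ satisfies $q \circ g = q$, the twist $\zeta = q^*\zeta_{\M}$ is $g$-invariant, and the twisted extremal equation
$$\bar\partial \nabla^{1,0}\bigl(S(\omega) - \Lambda_\omega \zeta\bigr) = 0$$
is natural under biholomorphisms fixing the twist. Second, for any $h \in \Aut_0(q)$, one has
$$h^*(g^*\omega) = g^*\omega \;\Longleftrightarrow\; (ghg^{-1})^*\omega = \omega \;\Longleftrightarrow\; ghg^{-1} \in K' \;\Longleftrightarrow\; h \in g^{-1}K'g = K,$$
so $\Isom_0(q, g^*\omega) = K$ as required.

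The only point to verify with a little care is that the conjugacy theorem for maximal compact subgroups is applicable to $\Aut_0(q)$: here $\Aut_0(q)$ is realised as the identity component of a closed complex Lie subgroup of $\Aut_0(B)$ (the stabiliser of the map $q$), which is a finite-dimensional connected Lie group, so Cartan--Iwasawa--Malcev indeed delivers the required $g$ inside $\Aut_0(q)$ itself. Beyond this invocation, the argument is a direct computation, and this is the only place where a non-trivial structural input enters.
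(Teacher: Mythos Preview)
Your proof is correct and follows essentially the same approach as the paper's: conjugate the two maximal compact subgroups via Cartan--Iwasawa--Malcev, then check that $g^*\omega$ is twisted extremal (using $g^*\zeta = \zeta$) with isometry group $K$. You have simply spelled out the conjugation computation and the applicability of the structural Lie-theoretic input more explicitly than the paper does.
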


\begin{proof} Maximal compact subgroups of $\Aut_0(q)$ are each conjugate to each other, so suppose $$gKg^{-1} = \Isom_0(q).$$ Then $g^*\omega$ is a twisted extremal metric with the same twist (since $g^*\zeta = \zeta$), and moreover the isometry group of $g^*\omega$ is seen to equal $K$.
\end{proof}

We also require a more precise statement concerning the uniqueness of twisted extremal vector fields. We begin by proving that the twisted Futaki invariant is independent of choice of K\"ahler metric; this can be shown to be a consequence of \cite[Corollary 6.9]{kahler}, but it seems worth providing a direct proof not reliant on that more difficult statement. The result in the twisted K\"ahler-Einstein setting is due to Datar-Sz\'ekelyhidi \cite[Proposition 8]{datar-szekelyhidi}, using a different approach.

\begin{proposition}\label{prop:twisted-futaki} Let $\nu \in \mfh_{q}$ have associated holomorphy potential $h$ with respect to $\omega$. Then the twisted Futaki invariant $$F_{\zeta}(\nu) = \int_Y h(S(\omega) - \Lambda_{\omega}\zeta - \hat S_{\zeta})\omega^n$$ is independent of choice of $\omega \in \beta$.\end{proposition}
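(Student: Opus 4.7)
The plan is to show that $F_{\zeta}(\nu)$ is constant along any smooth path $\omega_t = \omega + t\ddb\phi$ of Kähler metrics in $\beta$, by computing the derivative at $t=0$ and showing it vanishes identically in $\phi$. Since $\beta$ is parametrised (modulo constants) by such potentials, this will suffice. I would begin by rewriting the Futaki invariant in ``wedge form'',
\[
F_\zeta(\nu) = n\int_Y h_t\,\Ric\omega_t\wedge\omega_t^{n-1} - n\int_Y h_t\,\zeta\wedge\omega_t^{n-1} - \hat S_\zeta\int_Y h_t\,\omega_t^n,
\]
using the identities $S(\omega_t)\omega_t^n = n\Ric\omega_t\wedge\omega_t^{n-1}$ and $\Lambda_{\omega_t}\zeta\cdot\omega_t^n = n\zeta\wedge\omega_t^{n-1}$. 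This form is advantageous because each factor is closed, which makes Stokes-type integrations by parts transparent.

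The three variations needed are: $\dot h_t|_{0} = \nu(\phi)$ (Lemma \ref{lem:changeinpot}); the variation $\frac{d}{dt}\Ric\omega_t|_{0}$ is $-i\ddb(\Lambda_\omega i\ddb\phi)$, in particular an exact form; and $\frac{d}{dt}\omega_t^n|_{0} = n\,i\ddb\phi\wedge\omega^{n-1}$. The first ingredient to coordinate these is the standard identity, valid for any holomorphic vector field $\nu$ with potential $h$ with respect to $\omega$,
\[
\int_Y \nu(\phi)\,\omega^n = -n\int_Y \phi\,\ddb h\wedge\omega^{n-1},
\]
obtained from $L_\nu\omega = \ddb h$ together with Cartan's formula and Stokes. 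The second ingredient is the geometric hypothesis that $\nu\in\mfh_q$, so $q_*\nu=0$, which forces $\iota_\nu\zeta = 0$ and $L_\nu\zeta = 0$; this is precisely the input that handles all terms where $\zeta$ appears.

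Collecting terms, the derivative $\frac{d}{dt}F_\zeta(\nu)|_{0}$ splits into three groups. The $\hat S_\zeta$-term handles itself by the identity above, since variations in $\int h_t\omega_t^n$ cancel after Stokes. The $\zeta$-term reduces, after using $L_\nu\zeta = 0$ and the closedness of $\zeta$, to a combination of $\int \phi\,\zeta\wedge\ddb h\wedge\omega^{n-2}$-type integrals that matches the $\nu(\phi)$ contribution. For the Ricci term, one uses that $\frac{d}{dt}\Ric\omega_t|_{0}$ is $\ddb$-exact and that $\Ric\omega$ itself is closed to move derivatives off $\phi$ via Stokes; the residual is again absorbed by the $\nu(\phi)\Ric\omega\wedge\omega^{n-1}$ contribution coming from $\dot h_t$.

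The main obstacle is the algebraic bookkeeping: the Ricci variation is non-local in $\phi$ (it involves $\Lambda_\omega i\ddb\phi$ rather than $\phi$ itself), so the integration by parts has to be performed carefully on wedge products of mixed $(p,q)$-forms, keeping track that every boundary term vanishes because $\omega$, $\Ric\omega$ and $\zeta$ are all closed and $L_\nu\zeta=0$. Once all three groups are reduced to integrals against $\phi\cdot\ddb h\wedge(\textrm{closed form})\wedge\omega^{n-2}$, the precise identification of $\hat S_\zeta$ as the topological average makes the sum cancel, proving $\frac{d}{dt}F_\zeta(\nu)|_{0}=0$.
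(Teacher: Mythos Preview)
Your approach is correct and is the classical ``wedge-form'' route, in the spirit of Futaki's original argument; the paper takes a genuinely different path. The paper does \emph{not} pass to wedge products. Instead it differentiates $F_{\zeta,t}(\nu)$ directly, using the known formula for the linearisation of the twisted scalar curvature in terms of the twisted Lichnerowicz operator $\scL_\zeta$ (Equation~\eqref{eq:linearisation}). After applying the integration-by-parts identity $\int f\Delta\phi\,\omega^n=-\int\langle\nabla^{1,0}f,\nabla^{1,0}\phi\rangle\,\omega^n$ with $f=h(S(\omega)-\Lambda_\omega\zeta-\hat S_\zeta)$, all terms cancel except $-\int h\,\overline{\scL_\zeta(\phi)}\,\omega^n$; self-adjointness of $\scL_\zeta$ then reduces this to $\int\scL_\zeta(h)\,\phi\,\omega^n$, which vanishes because $h\in\ker\scL_\zeta$ by Corollary~\ref{kernel-geometrically}. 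So the paper encodes the hypothesis $\nu\in\mfh_q$ via the kernel characterisation of $\scL_\zeta$, while you encode it via the equivalent pointwise statement $\iota_\nu\zeta=0$. The paper's route is shorter given the operator-theoretic machinery already set up in Section~\ref{sec:twisted-extremal}; yours is more self-contained but, as you acknowledge, demands careful bookkeeping on the Ricci piece.

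One small correction to your outline: your final sentence suggests that the cancellation only occurs after combining all three groups and invoking the topological value of $\hat S_\zeta$. In fact each of your three groups vanishes \emph{separately}: $\int_Y h_t\,\omega_t^n$ is itself independent of $t$ (this is your ``identity above''), and likewise $n\int_Y h_t\,\zeta\wedge\omega_t^{n-1}$ and $n\int_Y h_t\,\Ric\omega_t\wedge\omega_t^{n-1}$ are each invariant on their own. The constant $\hat S_\zeta$ plays no role in the vanishing of the derivative.
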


\begin{proof}

Take a family of K\"ahler metrics $\omega_t \in \beta$, and suppose $\nu$ has associated potentials $h_t $. Setting $$F_{\zeta,t }(\nu) =  \int_Y h_t(S(\omega_t) - \Lambda_{\omega_t}\zeta - \hat S_{\zeta})\omega_t^n,$$ it suffices to show $$\frac{d}{dt}_{\big| t=0}F_{\zeta,t}(\nu) = 0.$$

Write $$\frac{d}{dt}_{\big| t=0} \omega_t = \ddb \phi.$$ Then $$\frac{d}{dt}_{\big| t=0} \omega_t^n = \Delta \phi\omega^n.$$ Recall also that we have
$$\frac{d}{dt}_{\big| t=0} \left( S(\omega_t) - \Lambda_{\omega_t}\zeta \right) = \overline{\scL_{\zeta} (\phi)} - \langle \nabla^{1,0} \left( S(\omega) - \Lambda_{\omega} (\zeta ) \right), \nabla^{1,0} \phi \rangle . $$
This follows from our previous formula \eqref{eq:linearisation} for the linearisation of the twisted scalar curvature, by conjugating and using that the twisted scalar curvature is a real operator. 
Finally, by Lemma \ref{lem:changeinpot}, we also have  
 $$\frac{d}{dt}_{\big| t=0} h_t =  \nu \left( \phi \right).$$
Thus
\begin{align*} \frac{\textnormal{d}}{\textnormal{dt}}_{\big| t=0} \bigg( F_{\zeta, t} (\nu ) \bigg) = \int_Y \bigg(& \nu (\phi) ( S(\omega) - \Lambda_{\omega}\zeta - \hat S_{\upsilon} ) + h (S(\omega)- \Lambda_{\omega}\zeta - \hat S_{\zeta}) \Delta(\phi) \\
&- h \overline{\scL_{\zeta} (\phi)} + h \langle \nabla^{1,0} \left( S(\omega)- \Lambda_{\omega}\zeta \right), \nabla^{1,0} \phi \rangle  \bigg) \omega^n,
\end{align*}

Note the identity $$ \int_Y f \Delta (\phi) \omega^n = - \int_Y \langle \nabla^{1,0} f, \nabla^{1,0} \phi \rangle \omega^n.$$ Applying this to $f= h (S(\omega)- \Lambda_{\omega}\zeta - \hat S_{\zeta})$ gives
\begin{align*} \int_Y  h (S(\omega)- \Lambda_{\omega}\zeta - \hat S_{\zeta}) \Delta (\phi) \omega^n =& - \int_Y \langle \nabla^{1,0}  \left( h (S(\omega)- \Lambda_{\omega}\zeta - \hat S_{\zeta}) \right), \nabla^{1,0} \phi \rangle \omega^n \\
=& - \int_Y h \langle \nabla^{1,0} \left( S(\omega)- \Lambda_{\omega}\zeta \right), \nabla^{1,0} \phi \rangle \omega^n \\
& - \int_Y  \left( S(\omega)- \Lambda_{\omega}\zeta - \hat S_{\zeta} \right) \langle \nabla^{1,0} h, \nabla^{1,0} \phi \rangle \omega^n .
\end{align*}
Moreover, $\langle \nabla^{1,0} h, \nabla^{1,0} \phi \rangle = \nu (\phi).$

Combining the above, we obtain
\begin{align*} \frac{\textnormal{d}}{\textnormal{dt}}_{\big| t=0} \bigg( F_{\zeta, t} (\nu ) \bigg) &= - \int_Y h \overline{\scL_{\zeta} (\phi)}  \omega^n \\
&=  \int_Y \scL_{\zeta} (h)  \phi \omega^n \\
&=0,
\end{align*}
as $h \in \ker \scL_{\zeta}$ by Corollary \ref{kernel-geometrically}.
\end{proof}

This allows us to prove the following, which is a variant of results of Futaki-Mabuchi \cite{futaki-mabuchi}, and is proven in a similar way. Another exposition of such results for extremal metrics is given by Berman-Berndtsson \cite[Section 4.1]{berman-berndtsson}. 

\begin{proposition}\label{unique-vector-field}
Suppose $\omega, \omega' \in \beta$ are twisted extremal metrics with twisted extremal vector fields $\nu, \nu'$ respectively such that $\Isom_0(q, \omega) = \Isom_0(q, \omega')$. Then $\nu = \nu'$.
\end{proposition}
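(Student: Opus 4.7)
The plan is to prove a twisted version of the Futaki--Mabuchi bilinear form argument. Write $K = \Isom_0(q, \omega) = \Isom_0(q, \omega')$ and first observe that both $\nu$ and $\nu'$ are $K$-invariant: the twisted extremal vector field of $\omega$ is the gradient of $S(\omega) - \Lambda_\omega \zeta$, a $K$-invariant function, since $\omega$ is $K$-invariant by assumption and $\zeta$ is $K$-invariant as it is pulled back via $q$ from $W$ and $K \subset \Aut_0(q)$ preserves $q$. The same applies to $\nu'$.

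The key object is the symmetric bilinear pairing, defined for $K$-invariant $\nu_1, \nu_2 \in \mfh_q$ and a $K$-invariant K\"ahler metric $\omega_0 \in \beta$ by
\begin{equation*}
\mathcal{B}_{\omega_0}(\nu_1, \nu_2) = \int_Y h^{\omega_0}_{\nu_1}\, h^{\omega_0}_{\nu_2}\, \omega_0^n,
\end{equation*}
where $h^{\omega_0}_{\nu_i}$ denotes the $L^2(\omega_0)$-mean-zero holomorphy potential of $\nu_i$ with respect to $\omega_0$. This pairing is positive definite on mean-zero potentials. The central step I would carry out is to show that $\mathcal{B}_{\omega_0}$ is independent of the choice of $K$-invariant $\omega_0 \in \beta$. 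This would be proved by taking the affine path $\omega_t = (1-t)\omega + t\omega'$, which consists of $K$-invariant metrics since that space is convex, and differentiating $\mathcal{B}_{\omega_t}(\nu_1, \nu_2)$ in $t$. Using Lemma \ref{lem:changeinpot} to describe the evolution $\dot h^{\omega_t}_{\nu_i} = \nu_i(\dot\phi_t) + \mathrm{const}$, together with integration by parts exploiting holomorphicity of the $\nu_i$, all terms cancel. The $\zeta$-twist does not enter the definition of $\mathcal{B}$, so the computation is the classical Futaki--Mabuchi one restricted to $K$-invariant objects.

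With metric-independence of $\mathcal{B}$ in hand, the proposition follows quickly. Since $\omega$ is twisted extremal with vector field $\nu$, the mean-zero holomorphy potential for $\nu$ at $\omega$ is precisely $h^\omega_\nu = S(\omega) - \Lambda_\omega \zeta - \hat S_\zeta$. Hence for any $K$-invariant $\mu \in \mfh_q$,
\begin{equation*}
F_\zeta(\mu) = \int_Y h^\omega_\mu \bigl(S(\omega) - \Lambda_\omega \zeta - \hat S_\zeta\bigr) \omega^n = \mathcal{B}_\omega(\mu, \nu),
\end{equation*}
and similarly $F_\zeta(\mu) = \mathcal{B}_{\omega'}(\mu, \nu')$. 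Combining Proposition \ref{prop:twisted-futaki} (metric-independence of $F_\zeta$) with metric-independence of $\mathcal{B}$ yields $\mathcal{B}(\mu, \nu) = \mathcal{B}(\mu, \nu')$ for every $K$-invariant $\mu \in \mfh_q$. Specialising to $\mu = \nu - \nu'$, which is itself $K$-invariant, gives $\mathcal{B}(\nu - \nu', \nu - \nu') = 0$, and positive definiteness forces $\nu = \nu'$.

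The main obstacle is the metric-independence of $\mathcal{B}$. Classically the Futaki--Mabuchi identity is stated in the centre of the reductive part of $\Aut_0$, where full $\Aut_0$-invariance of metrics and potentials is available; here one only has $K$-invariance, so care is needed to verify that $h^{\omega_t}_{\nu_i}$ remains $K$-invariant along the path (which follows from $K$-invariance of both $\nu_i$ and $\omega_t$), and that the integration-by-parts manoeuvres do not secretly require a stronger invariance. Modulo this bookkeeping the argument is a direct adaptation of the classical one, and the rest of the proof is purely formal.
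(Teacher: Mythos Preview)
Your overall strategy coincides with the paper's: use the Futaki--Mabuchi bilinear form together with the metric-independence of the twisted Futaki invariant (Proposition~\ref{prop:twisted-futaki}) to deduce $\langle \mu, \nu \rangle = \langle \mu, \nu' \rangle$ for all $\mu$ in an appropriate subspace, then conclude by nondegeneracy. The paper also cites Futaki--Mabuchi for the metric-independence of $\mathcal{B}$ rather than reproving it, but your proposed derivation along a path is standard and fine.

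There is, however, a genuine gap in your positive-definiteness claim. The pairing $\mathcal{B}_{\omega_0}(\nu_1,\nu_2) = \int_Y h_{\nu_1} h_{\nu_2}\,\omega_0^n$ is $\C$-bilinear (not Hermitian), so ``positive definite'' makes no sense on a complex vector space; it holds only on a real subspace where the holomorphy potentials are \emph{real}. Your chosen subspace of $K$-invariant vector fields does not guarantee this: for example, if $\tau$ is $K$-invariant with real potential then $i\tau$ is equally $K$-invariant but has purely imaginary potential, and $\mathcal{B}(i\tau,i\tau) = -\mathcal{B}(\tau,\tau) < 0$. Consequently the final step ``$\mathcal{B}(\nu-\nu',\nu-\nu')=0 \Rightarrow \nu=\nu'$'' is unjustified as written.

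The paper fixes this by working instead on $\mfh_q^K = \{\,\tau \in \mfh_q : \text{the flow of } \Ima\tau \text{ lies in } K\,\}$. For $\tau \in \mfh_q^K$ and any $K$-invariant metric, the holomorphy potential is real (this is exactly the content of the observation you already made, that $\nu = \nabla^{1,0}$ of a \emph{real} function), so $\mathcal{B}$ is genuinely positive definite there. One then checks $\nu,\nu' \in \mfh_q^K$: since $\nu$ is the $(1,0)$-gradient of the real function $S(\omega)-\Lambda_\omega\zeta$, the flow of $\Ima\nu$ is an $\omega$-isometry, hence lies in $K$; similarly for $\nu'$ using $\omega'$ and the hypothesis $\Isom_0(q,\omega)=\Isom_0(q,\omega')=K$. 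Your observation about the reality of the defining function is precisely the input needed here, but it should be used to place $\nu,\nu'$ in $\mfh_q^K$, not merely to assert $K$-invariance.
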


\begin{proof}

Recall Futaki-Mabuchi's bilinear form on holomorphic vector fields $\nu, \nu'$ with mean-value zero holomorphy potentials $h_{\nu}, h_{\nu'}$ with respect to $\omega$ given by $$\langle \nu, \nu' \rangle = \int_Y h_{\nu} h_{\nu'} \omega^n.$$ This is independent of choice of $\omega \in \beta$ by Futaki-Mabuchi \cite{futaki-mabuchi}. 

Denote by $\mfh_{q}^K\subset \mfh_{q}$ the subspace of holomorphic vector fields $\nu$ such that the flow of the imaginary part $\Ima \nu$ lies in $K$. Then just as in Futaki-Mabuchi \cite{futaki-mabuchi} or \cite[Proposition 4.11]{berman-berndtsson}, the bilinear form $\langle \cdot, \cdot \rangle$ is real-valued and positive-definite on $\mfh_q^K$. Indeed, the fact that the pairing is real-valued  is a consequence of  their claim that $h_{\nu}$ is real-valued, which follows since $\omega$ is invariant under the flow of $\Ima \nu$ \cite[p. 1189]{berman-berndtsson}. Positive-definiteness is a simple consequence of the holomorphy potentials being real-valued, as the $L^2$-inner product is certainly positive-definite. In particular, the inner-product is non-degenerate. 

Returning to the claim of the Proposition, we wish to show the two twisted extremal vector fields are equal. We first claim $\nu, \nu' \in  \mfh_{q}^K$, which follows from the fact that the flow of $\Ima \nu$ is an isometry and our hypothesis that $K = \Isom_0(\pi, \omega) = \Isom_0(\pi, \omega')$. To see that  the flow of $\Ima \nu$ is an isometry, note that \begin{equation}\label{eqn:twisted-extremal-vector-field}\nu = \nabla^{1,0}(S(\omega) - \Lambda_{\omega}\zeta),\end{equation} is the ($(1,0)$-part of) the gradient of a \emph{real} valued function, which implies the claim \cite[Proof of Proposition 4.14]{berman-berndtsson}. Thus by non-degeneracy of the inner-product, it is enough to show $$\langle \tau, \nu \rangle = \langle \tau, \nu'\rangle $$ for all $\tau \in \mfh^K_{q}$.

Denote by $h_{\tau}$ and $h_{\tau}'$ the holomorphy potential of $\tau$ of mean-value zero with respect to $\omega$ and $\omega'$ respectively. Then since $\nu$  is the twisted extremal vector field given by Equation \eqref{eqn:twisted-extremal-vector-field}, by definition of the inner product we have $$\langle \tau, \nu\rangle = \int_Y h_{\tau}(S(\omega) - \Lambda_{\omega}\eta - \hat S_{\zeta})\omega^n.$$ This is, by definition, the twisted Futaki invariant of $\tau$, which by Proposition \ref{prop:twisted-futaki} is independent of choice of $\omega \in \alpha$. Thus $$\int_Y h_{\tau}(S(\omega) - \Lambda_{\omega}\zeta - \hat S_{\zeta})\omega^n = \int_Y h_{\tau}'(S(\omega') - \Lambda_{\omega'}\zeta - \hat S_{\zeta})\omega'^n,$$ which means $$\langle \tau, \nu \rangle = \langle \tau, \nu'\rangle,$$ which proves the result.
\end{proof}

\subsection{Uniqueness of optimal symplectic connections}

We begin with the discrete automorphism group case.

\begin{theorem} Suppose $\Aut_0(\pi)$ is trivial. If $\omega_X, \omega_X' \in \alpha$ are two optimal symplectic connections, then $$\omega_X = \omega_{X}' + \pi^* \ddb \phi_B,$$ with $\phi_B\in C^{\infty}(B,\R)$.\end{theorem}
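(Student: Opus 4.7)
The plan is to use the adiabatic machinery of Section \ref{sec:adiabatic} to convert each optimal symplectic connection into a twisted extremal metric on $X$ in the class $k\beta + \alpha$, and then exploit uniqueness of the latter (which is available since $\Aut_0(\pi)$ is trivial) to obtain an equation that I can read off at leading order.

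First I would fix an arbitrary K\"ahler metric $\omega_B \in \beta$ and use Proposition \ref{hashimoto} to produce a K\"ahler form $\xi$ making the data $(\omega_X, \omega_B, \xi)$ satisfy the setup of Section \ref{sec:adiabatic}. The critical point here is that by Theorem \ref{weil-petersson-form} the Weil-Petersson form $\zeta$ depends only on the class $\alpha$ and not on the chosen relatively cscK representative, so the same pair $(\omega_B, \xi)$ works simultaneously for $\omega_X$ and $\omega_X'$. Applying Theorem \ref{thm:solutions} to each, one obtains twisted extremal metrics $\tilde\omega_k, \tilde\omega_k' \in k\beta + \alpha$ with the common twist $\pi^*\xi$ for all $k \gg 0$. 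Since $\Aut_0(\pi)$ is trivial we have $\mfh_\pi = 0$, so the twisted extremal vector fields (which by assumption lie in $\mfh_\pi$) must vanish, making both $\tilde\omega_k$ and $\tilde\omega_k'$ twisted cscK; Theorem \ref{thm:automorphisms-of-maps}(ii) applied to $q = \pi$ then forces $\tilde\omega_k = \tilde\omega_k'$ for every $k \gg 0$.

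Writing $\tilde\omega_k = k\omega_B + \omega_X + \ddb F_{k,r}$ and $\tilde\omega_k' = k\omega_B + \omega_X' + \ddb F_{k,r}'$ with $F_{k,r} = \phi_{k,r} + \delta_{k,r} + \lambda_{k,r} + \psi_{k,r}$, the equality above is equivalent to
$$\omega_X - \omega_X' = \ddb(F_{k,r}' - F_{k,r}).$$
The last step uses that $f_1 = d_1 = 0$ in Proposition \ref{induction-proposition}, so I can decompose $F_{k,r}' - F_{k,r} = (f_2' - f_2) + G_k$, where $f_2, f_2' \in C^\infty(B,\R)$ and $\|G_k\|_{C^2} = O(k^{-1})$ (Sobolev embedding applied to the $L^2_{l+4}$-bound $\|\psi_{k,r}\| = O(k^{-3})$, with $l$ chosen sufficiently large). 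Since $\omega_X - \omega_X'$ is independent of $k$ while $\ddb G_k \to 0$, the quantity $\ddb G_k$ must vanish identically, giving $\omega_X - \omega_X' = \ddb(f_2' - f_2) = \pi^* \ddb(f_2' - f_2)$; setting $\phi_B = f_2' - f_2$ completes the proof.

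The main obstacle I anticipate is the third step, namely isolating a potential that is literally pulled back from $B$ out of the (a priori $k$-dependent) correction $F_{k,r}' - F_{k,r}$. The construction of Proposition \ref{induction-proposition} mixes base, fibrewise holomorphy, and fibrewise orthogonal potentials, and a priori $F_{k,r}' - F_{k,r}$ has contributions from all three; what makes the argument work is the vanishing $f_1 = d_1 = 0$, which means the only $O(1)$ term in the $k^{-1}$-expansion is already a function on $B$, while every other term is $O(k^{-1})$ and therefore forced to be annihilated by $\ddb$ by the $k$-independence of the left-hand side.
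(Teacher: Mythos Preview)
Your proposal is correct and follows essentially the same approach as the paper: construct twisted cscK metrics from each optimal symplectic connection via Theorem \ref{thm:solutions}, invoke uniqueness (Theorem \ref{thm:automorphisms-of-maps}) since $\Aut_0(\pi)$ is trivial, and then read off the equality at the $O(1)$ level using that the only $k$-independent contribution to the approximate solution beyond $k\omega_B + \omega_X$ is the base function $f_2$. The paper phrases the final step as a $C^2$ bound on forms and a triangle-inequality argument rather than your decomposition $F_{k,r}' - F_{k,r} = (f_2' - f_2) + G_k$ at the level of potentials, but the content is identical; your explicit observation that the Weil--Petersson form $\zeta$ is independent of the relatively cscK representative (so the same $(\omega_B,\xi)$ serves both constructions) is a detail the paper leaves implicit.
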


\begin{proof}

From $\omega_X$ and $\omega_X'$, Theorem \ref{thm:solutions} constructs twisted cscK metrics $\tilde \omega_k$ and $\tilde \omega_k'$ respectively in the K\"ahler class $k\alpha + \beta$. These are twisted cscK rather than twisted extremal, since $\Aut_0(\pi)$ and hence $\mfh_{\pi}$ are trivial. Since $\pi: X \to B$ has no continuous automorphisms, uniqueness of twisted cscK metrics described in Theorem \ref{thm:automorphisms-of-maps} implies $$\tilde \omega_k = \tilde \omega_k'.$$ By Theorem \ref{thm:solutions}, we have \begin{align*}|\tilde \omega_k - (k\omega_B + \omega_X + \ddb \phi_{B,2})|_{C^2}&\leq Ck^{-1}, \\ |\tilde \omega_k' - (k\omega_B + \omega'_X + \ddb \phi'_{B,2})|_{C^2}&\leq C'k^{-1}.\end{align*} Thus by the triangle inequality $$|\omega_X + \ddb \phi_{B,2} - \omega_X' - \ddb \phi'_{B,2}|_{C^2}\leq (C+C')k^{-1},$$ and hence this quantity is independent of $k$ we see that $$\omega_X + \ddb \phi_{B,2} = \omega_X' + \ddb \phi'_{B,2}.$$ Hence $$\omega_X = \omega_X' + \ddb (\phi'_{B,2}-\phi'_{B,2}),$$ as required.
\end{proof}

\begin{remark} As in Remark \ref{rmk:projective}, this result can be slightly sharpened and clarified when $X$ and $B$ are projective, with $\alpha = c_1(H)$ and $\beta = c_1(L)$. Then all that is required is that $\Aut_0(\pi, H)$ is discrete.\end{remark}

The case when $\Aut_0(\pi)$ is non-trivial is more challenging.  We first prove a weak version of uniqueness. Denoting $G = \Aut_0(\pi)$, we write $\omega_X \in_B  \overline{G.\omega_X'}$ if there exist elements $g_t \in G$ such that $$\lim_{t \to 0}|\omega_X - g_t^*\omega_X'|_{C^2} = 0.$$

\begin{proposition}\label{weak-uniqueness} If $\omega_X, \omega_X' \in \alpha$ are two optimal symplectic connections, then there is a function $\phi_B\in C^{\infty}(B,\R)$ such that $$\omega_X \in  \overline{G.(\omega_X'+ \ddb \phi_B)}.$$ \end{proposition}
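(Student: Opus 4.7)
My plan is to reduce the statement to the uniqueness of twisted extremal metrics (Theorem \ref{thm:automorphisms-of-maps}(ii)) via the adiabatic construction from Theorem \ref{thm:solutions}, mirroring the discrete case handled immediately above but incorporating the action of $G=\Aut_0(\pi)$. The first move is to conjugate: since any two maximal compact subgroups of $G$ are conjugate, and since the optimal symplectic connection equation is preserved under pullback by elements of $G$ (because such $g$ act as the identity on $B$, fixing $\omega_B$ and the Weil--Petersson form), I may replace $\omega_X'$ by $g^*\omega_X'$ for an appropriate $g\in G$ so that $\Isom_0(\pi,\omega_X)=\Isom_0(\pi,\omega_X')=:K$.

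Next, Theorem \ref{thm:solutions} applied to each of $\omega_X$ and $\omega_X'$ produces twisted extremal metrics $\tilde\omega_k,\tilde\omega_k'\in k\beta+\alpha$ with twist $\pi^*\xi$, satisfying
\begin{align*}
|\tilde\omega_k - (k\omega_B+\omega_X+\ddb\phi_{B,2})|_{C^2} &\leq Ck^{-1}, \\
|\tilde\omega_k' - (k\omega_B+\omega_X'+\ddb\phi'_{B,2})|_{C^2} &\leq Ck^{-1},
\end{align*}
with $\phi_{B,2},\phi'_{B,2}\in C^\infty(B)$. The inductive construction of Proposition \ref{induction-proposition} and the perturbation in Section \ref{sec:perturb} can both be run $K$-equivariantly, so $\tilde\omega_k$ and $\tilde\omega_k'$ are $K$-invariant; combined with Theorem \ref{thm:automorphisms-of-maps}(i) and the maximality of $K$, this forces $\Isom_0(\pi,\tilde\omega_k)=K=\Isom_0(\pi,\tilde\omega_k')$. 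Proposition \ref{unique-vector-field} then yields that $\tilde\omega_k$ and $\tilde\omega_k'$ share a twisted extremal vector field, and Theorem \ref{thm:automorphisms-of-maps}(ii) supplies $h_k\in G$ with $h_k^*\tilde\omega_k=\tilde\omega_k'$.

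Setting $g_k=h_k^{-1}\in G$ and $\phi_B=\phi'_{B,2}-\phi_{B,2}\in C^\infty(B)$, I rearrange $g_k^*\tilde\omega_k'=\tilde\omega_k$, using that $g_k$ acts as the identity on $B$ (so fixes $\omega_B$ and the $\ddb$-exact forms pulled back from $B$), to obtain
$$g_k^*\bigl(\omega_X'+\ddb\phi_B\bigr)-\omega_X \;=\; \epsilon_k - g_k^*\epsilon_k',$$
where $\epsilon_k,\epsilon_k'$ denote the $O(k^{-1})$ error terms from the bounds above. If the right-hand side tends to zero in $C^2$ as $k\to\infty$, then the sequence $\{g_k\}\subset G$ witnesses $\omega_X\in\overline{G.(\omega_X'+\ddb\phi_B)}$.

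The hard point is controlling $|g_k^*\epsilon_k'|_{C^2}$ with respect to a fixed background metric, since the $g_k$ may escape to infinity in $G/K$ and are not isometries of a fixed metric. To handle this I would measure the pulled-back error against the pulled-back metric: since $g_k^*\tilde\omega_k'=\tilde\omega_k$, diffeomorphism invariance gives $|g_k^*\epsilon_k'|_{C^2(\tilde\omega_k)}=|\epsilon_k'|_{C^2(\tilde\omega_k')}$, which is small by construction. Converting to a fixed reference norm on $X$ costs only polynomial factors in $k$, coming from the uniform equivalence of $\tilde\omega_k/k$ with $\omega_B$ in horizontal directions (cf.\ \cite[Lemma 6.2]{fine1}). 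These polynomial losses are absorbed by running the adiabatic construction to sufficiently high order $r$: by the final remark of Section \ref{sec:adiabatic}, one obtains error bounds $O(k^{-d})$ for any desired $d$, which beats any fixed polynomial loss and gives $|g_k^*\epsilon_k'|_{C^2}\to 0$ in the fixed norm, completing the argument.
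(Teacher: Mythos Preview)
Your argument follows the paper's strategy: build twisted extremal metrics from each optimal symplectic connection via Theorem \ref{thm:solutions}, match them up to $\Aut_0(\pi)$ using Proposition \ref{unique-vector-field} and Theorem \ref{thm:automorphisms-of-maps}(ii), and read off the orbit-closure statement from the approximation bounds. Two points distinguish your treatment from the paper's.

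First, you conjugate the optimal symplectic connections at the outset, invoking Theorem \ref{thm:autom-invariance} so that both have isometry group a fixed maximal compact $K$, and then run the adiabatic construction $K$-equivariantly. The paper instead leaves $\omega_X,\omega_X'$ alone and conjugates the resulting twisted extremal metrics at each $k$ via Lemma \ref{lemma-conjugation}, producing a $k$-dependent $g_k$ which is then composed with the $h_k$ from uniqueness. Your version is a mild streamlining (one fixed conjugation instead of a sequence), but the substance is the same.

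Second, and more interestingly, you explicitly flag the issue of bounding $|g_k^*\epsilon_k'|$ in a fixed norm when the $g_k$ may escape to infinity in $G$, and you propose to resolve it by measuring in the pulled-back metric $\tilde\omega_k=g_k^*\tilde\omega_k'$ and absorbing the polynomial conversion loss into the arbitrarily high order of the approximation. The paper's proof simply applies the triangle inequality at this step without comment, so you are being more careful here. Your fix is correct in outline; an alternative that avoids tracking the polynomial losses is to work at the level of $C^0$ norms of \emph{potentials} rather than forms: writing $\epsilon_k=i\partial\bar\partial\eta_k$ and $\epsilon_k'=i\partial\bar\partial\eta_k'$, the potential of $g_k^*\epsilon_k'$ is $\eta_k'\circ g_k$, and $\|\eta_k'\circ g_k\|_{C^0}=\|\eta_k'\|_{C^0}$ since $g_k$ is a diffeomorphism. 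Thus one only needs $\|\eta_k\|_{C^0},\|\eta_k'\|_{C^0}\to 0$, which the construction provides directly, and this is precisely what Lemma \ref{closed-orbit-to-d1} requires downstream.
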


\begin{proof} To $\omega_X$ and $\omega_X'$,  Theorem \ref{thm:solutions} associates twisted extremal metrics  $\tilde \omega_k, \tilde \omega_k' \in k\alpha + \beta$ respectively. By Lemma \ref{lemma-conjugation}, there is a $g_k \in  \Aut_0(X/B)$ such that $\tilde \omega_k$ and $g_k^*\omega_k'$ have the same isometry group $$\Isom_0(\pi, \tilde \omega_k) = \Isom_0(\pi, g_k^*\tilde \omega_k').$$ Thus by Proposition \ref{unique-vector-field}, the twisted extremal vector fields associated to $\tilde \omega_k$ and $g_k^*\tilde \omega_k'$ are actually equal. Since these metrics have the same associated vector field, Theorem \ref{thm:automorphisms-of-maps} produces $h_k\in \Aut_0(\pi)$ such that \begin{equation}\label{equality-eqn}\tilde \omega_k = (h_k \circ g_k)^*\tilde \omega_k'.\end{equation}

We can now argue as before. Theorem \ref{thm:solutions}  implies that \begin{align}\label{align-label}|\tilde \omega_k - (k\omega_B + \omega_X + \ddb \phi_{B,2})|_{C^2}&\leq Ck^{-1}, \\ |\tilde \omega_k' - (k\omega_B + \omega'_X + \ddb \phi'_{B,2})|_{C^2}&\leq C'k^{-1},\end{align} with $\phi_{B,2},\phi_{B,2}' \in C^{\infty}(B,\R)$.  We claim that $$\omega_X \in \overline{G.(\omega_X'+ \ddb (\phi_{B,2}' - \phi_{B,2}))}.$$ Indeed, Equations \eqref{equality-eqn}, \eqref{align-label} and the triangle inequality show that $$|\omega_X -  (h_k \circ g_k)^*(\omega_X'+\ddb (\phi_{B,2}' - \phi_{B,2}))|_C^0 \leq (C+C')k^{-1},$$ meaning $$\lim_{k \to \infty} |\omega_X -  (h_k \circ g_k)^*(\omega_X'+\ddb (\phi_{B,2}' - \phi_{B,2}))|_{C^2} = 0,$$ as required.
\end{proof}

It is not straightforward to conclude from this that $\omega_X \in G.(\omega_X'+ \ddb \phi_B)$. In fact, being able to pass from two K\"ahler metrics being in the same orbit \emph{closure} to the same orbit  in certain circumstances was one of the key new ingredients in the breakthrough of Darvas-Rubinstein \cite{darvas-rubinstein}. We use their techniques to conclude the uniqueness result we desire. 

Fix some $k \gg 0$ such that $k\beta + \alpha$ admits a twisted extremal metric, and let $\omega_k = k\omega_B + \omega_X$ be a reference metric. Let $\H_k$ be the space of K\"ahler potentials with respect to $\omega_k$.

\begin{definition} For $t \in [0,1]$, let $t \to \alpha_t \in \H_k$ be a smooth curve joining K\"ahler potentials $u_0, u_1 \in \H_k$. We define the \emph{length} of $\alpha$ to be $$\ell_1(\alpha) = \int_0^1 \|\dot \alpha_t\|_{\omega_k+\ddb \alpha_t} dt,$$ where $$\|\phi \|_{\omega_k+\ddb \alpha_t} = V^{-1}\int_X |\phi|(\omega_k+\ddb \alpha_t)^{m+n}$$ and $V = \int_X \omega_k^{m+n}$. The $d_1$\emph{-metric} is defined by $$d_1(u_0,u_1) = \inf \{ \ell_1(\alpha): \alpha \textrm{ is a smooth curve with } \alpha(0) = u_0, \alpha(1) = u_1\}.$$
\end{definition}

This is justified by the following result of Darvas.

\begin{theorem}\label{darvas-metric}\cite[Theorem 3.5]{darvas} $d_1$ is a metric on $\H_k$. \end{theorem}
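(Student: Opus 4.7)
The plan is to verify the three metric axioms: symmetry, triangle inequality, and non-degeneracy. Symmetry follows immediately by reversing the parametrization, since the integrand $\|\dot\alpha_t\|_{\omega_k+\ddb\alpha_t}$ is invariant under $t\mapsto 1-t$ (signs cancel inside the absolute value defining the norm). The triangle inequality follows by concatenating a smooth path $\alpha$ from $u_0$ to $u_1$ with a smooth path $\beta$ from $u_1$ to $u_2$; after reparametrizing and smoothing the concatenation at the junction, the resulting path realizes a length arbitrarily close to $\ell_1(\alpha) + \ell_1(\beta)$, and one takes infima. Non-negativity is immediate from the positivity of the integrand.

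The substantive issue is non-degeneracy: if $u_0 \neq u_1$ then $d_1(u_0, u_1) > 0$. The first step here is to introduce the Aubin-Mabuchi energy
$$E(u) = \frac{1}{(m+n+1)V}\sum_{j=0}^{m+n}\int_X u\, (\omega_k+\ddb u)^j \wedge \omega_k^{m+n-j}$$
and establish, by integration by parts, the variational identity
$$\frac{d}{dt}E(\alpha_t) = V^{-1}\int_X \dot\alpha_t\, (\omega_k+\ddb\alpha_t)^{m+n}$$
along any smooth curve $\alpha_t$ in $\H_k$. Integrating and moving absolute values inside gives $|E(u_1)-E(u_0)| \leq \ell_1(\alpha)$, hence $d_1(u_0, u_1) \geq |E(u_1)-E(u_0)|$; that is, $E$ is $1$-Lipschitz with respect to $d_1$. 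The same argument applied to truncated variants of $E$ (obtained by replacing $u$ with $\max(u,c)$ and regularizing) yields a whole family of $1$-Lipschitz functionals on $\H_k$.

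The next step is to promote this family of Lipschitz functionals into a genuine pointwise bound. The target estimate is
$$d_1(u_0, u_1) \;\geq\; V^{-1}\int_X |u_0 - u_1|\, (\omega_k + \ddb \max(u_0, u_1))^{m+n},$$
since the right-hand side is strictly positive whenever $u_0 \neq u_1$ (the Monge-Ampère measure of the envelope places mass on $\{u_0 \neq u_1\}$). The strategy I would pursue is to test an arbitrary smooth path $\alpha_t$ against the envelope $v = \max(u_0, u_1)$ by considering the auxiliary path $t \mapsto \max(\alpha_t, v)$, approximating it by smooth potentials, and applying the energy identity above to the approximants. A pluripotential comparison principle then controls how much Monge-Ampère mass concentrates on the coincidence locus, converting a bound on the length of the modified path into a bound on $\ell_1(\alpha)$ itself.

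The main obstacle is precisely this pointwise inequality: smooth paths between $u_0$ and $u_1$ can oscillate wildly, and controlling them from below by a static quantity attached to $\max(u_0, u_1)$ requires the full Bedford–Taylor pluripotential machinery, together with a stability statement for the complex Monge-Ampère operator under taking envelopes and smooth approximation. These are non-trivial analytic ingredients, but once the pointwise lower bound is established, non-degeneracy, and hence the theorem, follows immediately.
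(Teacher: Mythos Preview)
The paper does not prove this theorem at all: it is stated with the citation \cite[Theorem 3.5]{darvas} and no argument is given. It is quoted as an external input from Darvas's work on the Mabuchi geometry of finite energy classes, and the paper simply uses it as a black box (specifically, in the proof of Proposition \ref{darvas-rubinstein-analogue} and the final uniqueness theorem).

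Your sketch is a reasonable outline of the strategy Darvas actually uses, and you are right that the only substantive issue is non-degeneracy. However, you should be aware that the lower bound you are aiming for is not quite the one Darvas proves directly; his route goes through the Orlicz--Finsler framework and the characterisation of $d_1$ via the completion $\mathcal{E}^1$, and the comparison with $\int_X |u_0-u_1|\,\omega_{\max(u_0,u_1)}^{m+n}$ is established only after developing that machinery. Your proposed shortcut via envelopes and the comparison principle is plausible in spirit but, as you yourself note, the step ``a pluripotential comparison principle then controls how much Monge--Amp\`ere mass concentrates on the coincidence locus'' is doing essentially all the work and is not obviously easier than Darvas's actual argument. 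In any case, for the purposes of this paper no proof is required: the result is cited, not proved.
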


In the presence of automorphisms, for any $G \subset \Aut_0(X),$ the natural pseudo-metric is defined by Darvas-Rubinstein to be \cite[Section 2]{darvas-rubinstein} \begin{align*} &d_{1,G}: \H_k \times \H_k \to \R, \\ &d_{1,G}(\phi,\psi) = \inf_{g \in G} d_1(\phi, g.\psi),\end{align*} where by definition $$\omega_k - g^*(\omega_k + \ddb \psi) = \ddb g.\psi.$$  We will always take $G = \Aut_0(\pi)$.

\begin{remark} Both $d_1$ and $d_{1,G}$ depend on $k$; since we have fixed $k$, we omit this in the notation. \end{remark}

We require the following bound, due to Darvas \cite[Corollary 4.14, Theorem 3]{darvas}.

\begin{proposition}\label{darvas-bound} There is a constant $C>1$ such that $$d_1(\phi,\psi) \leq C\int_X | \phi - \psi| (\omega_{k} + \ddb \phi )^{m+n} + C \int_X  | \phi - \psi| (\omega_{k} + \ddb \psi )^{m+n} .$$\end{proposition}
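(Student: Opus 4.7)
The result is due to Darvas and I would invoke it directly from \cite[Corollary 4.14, Theorem 3]{darvas}; the proof strategy below sketches how one would derive such a bound from first principles. The $d_1$-metric is defined as an infimum over smooth paths in $\mathcal{H}_k$, so any specific path yields an upper bound. The natural candidate is the affine path $\alpha_t = (1-t)\phi + t\psi$ for $t \in [0,1]$. Since the space of K\"ahler potentials is convex, $\alpha_t \in \mathcal{H}_k$ and $\omega_k + \ddb \alpha_t = (1-t)\omega_\phi + t\omega_\psi$, where $\omega_\phi = \omega_k + \ddb\phi$ and $\omega_\psi = \omega_k + \ddb\psi$.

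First I would compute the length along this path using $\dot\alpha_t = \psi - \phi$:
$$\ell_1(\alpha) = V^{-1} \int_0^1 \int_X |\psi-\phi|\, \bigl((1-t)\omega_\phi + t\omega_\psi\bigr)^{m+n}\, dt.$$
Expanding the $(m+n)$-th power via the multinomial theorem and integrating in $t$ using the Beta integral yields
$$\ell_1(\alpha) = V^{-1}\sum_{j=0}^{m+n} c_j \int_X |\psi-\phi|\, \omega_\phi^{m+n-j}\wedge \omega_\psi^j,$$
for explicit positive combinatorial constants $c_j$. The extremal terms $j=0$ and $j=m+n$ are already of the form appearing on the right-hand side of the claimed inequality.

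The main obstacle is controlling the mixed terms $\int_X |\psi-\phi|\, \omega_\phi^{m+n-j}\wedge \omega_\psi^j$ for $0 < j < m+n$ by the two pure Monge--Amp\`ere terms. The cleanest route, followed by Darvas, is to identify $d_1(\phi,\psi)$ with $E(\phi) + E(\psi) - 2E\bigl(P(\phi,\psi)\bigr)$, where $E$ is the Monge--Amp\`ere energy functional and $P(\phi,\psi)$ is the rooftop envelope $(\sup\{u \in \mathcal{H}_k \cup \{\mathrm{psh}\} : u \leq \min(\phi,\psi)\})^*$. Concavity of $E$ along affine segments in the space of potentials, combined with the identity $dE_{|u}(v) = V^{-1}\int_X v\,(\omega_k+\ddb u)^{m+n}$, gives
$$E(\phi) - E\bigl(P(\phi,\psi)\bigr) \leq V^{-1}\int_X \bigl(\phi - P(\phi,\psi)\bigr)\,\omega_\phi^{m+n} \leq V^{-1}\int_X |\phi-\psi|\,\omega_\phi^{m+n},$$
and symmetrically with $\psi$. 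Summing yields the stated inequality with an explicit constant.

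Alternatively, one can bound the mixed terms by induction on $j$ via integration by parts, writing $\omega_\phi^{m+n-j}\wedge \omega_\psi^j = \omega_\phi^{m+n-j}\wedge \omega_\psi^{j-1}\wedge (\omega_k + \ddb\psi)$ and moving the $\ddb$ onto the remaining factors using Stokes. The technical difficulty here is that $|\psi - \phi|$ is only Lipschitz, not $C^2$, so integration by parts must be justified by approximation — this is the step I would expect to be delicate and would handle via the standard regularisation arguments of pluripotential theory.
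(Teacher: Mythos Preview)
Your primary approach---citing \cite[Corollary 4.14, Theorem 3]{darvas} directly---is exactly what the paper does; the paper provides no proof of its own for this proposition.

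Your supplementary sketch via the rooftop envelope formula has a slip worth flagging. Concavity of $E$ along the affine segment from $P(\phi,\psi)$ to $\phi$ gives
\[
E(\phi) - E\bigl(P(\phi,\psi)\bigr) \;\leq\; V^{-1}\int_X \bigl(\phi - P(\phi,\psi)\bigr)\,\omega_{P(\phi,\psi)}^{\,m+n},
\]
with the Monge--Amp\`ere measure of the \emph{envelope}, not $\omega_\phi^{m+n}$; the tangent at $\phi$ would give the opposite inequality. One then uses that $\omega_{P(\phi,\psi)}^{\,m+n}$ is supported on the contact set $\{P(\phi,\psi)=\min(\phi,\psi)\}$, where $\phi - P(\phi,\psi) = (\phi-\psi)_+ \leq |\phi-\psi|$. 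This still leaves the integral against $\omega_{P(\phi,\psi)}^{\,m+n}$ rather than $\omega_\phi^{m+n}$ or $\omega_\psi^{m+n}$, so a further comparison of these measures (again via envelope properties) is needed to reach the stated form. Your overall outline---Pythagorean formula plus concavity of $E$ plus contact-set support---is the right skeleton; only the bookkeeping of which measure appears at each step needs care. The integration-by-parts alternative you mention is, as you anticipate, the more delicate route.
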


Using this, we can understand orbit closures via the $d_{1,G}$-pseudometric.

\begin{lemma}\label{closed-orbit-to-d1} Suppose $\omega_X \in  \overline{G.\omega_X'}$ and $$\omega_X' = \omega_X + \ddb \phi.$$ Then $$d_{1,G}(0,\phi) = 0.$$  \end{lemma}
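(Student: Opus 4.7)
The plan is to extract from the orbit-closure hypothesis a sequence of potentials tending to zero uniformly, and then apply Proposition~\ref{darvas-bound} to conclude $d_1$-convergence to the zero potential.

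First, I unpack the hypothesis: $\omega_X \in \overline{G.\omega_X'}$ provides a sequence $g_t \in G = \Aut_0(\pi)$ with $|\omega_X - g_t^*\omega_X'|_{C^2} \to 0$. Since $g_t \in \Aut_0(\pi)$ preserves $\pi$, it fixes $\pi^*\omega_B$, and so
\[g_t^*(\omega_k + \ddb\phi) = g_t^*(k\omega_B + \omega_X') = k\omega_B + g_t^*\omega_X'.\]
Combined with the defining relation $\omega_k - g_t^*(\omega_k + \ddb\phi) = \ddb(g_t.\phi)$ (in which $g_t.\phi$ is determined only up to an additive constant), this yields
\[\ddb(g_t.\phi) = \omega_X - g_t^*\omega_X' \to 0 \quad \text{in } C^0.\]

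Next, I fix the additive constant in each $g_t.\phi$ so that $\int_X g_t.\phi\,\omega_k^{m+n} = 0$. Taking the trace with respect to $\omega_k$, one has $\Delta_{\omega_k}(g_t.\phi) \to 0$ uniformly on $X$, and standard elliptic regularity on the compact K\"ahler manifold $(X,\omega_k)$, combined with the mean-zero normalisation, gives $g_t.\phi \to 0$ uniformly (indeed in $C^{k,\alpha}$ for any $k,\alpha$, by bootstrapping the convergence of $\ddb(g_t.\phi)$).

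Finally, applying Proposition~\ref{darvas-bound} with first argument $0$ and second argument $g_t.\phi$,
\[d_1(0, g_t.\phi) \leq C\int_X |g_t.\phi|\,\omega_k^{m+n} + C\int_X |g_t.\phi|\,(\omega_k + \ddb(g_t.\phi))^{m+n}.\]
Each measure has constant total mass $V = \int_X \omega_k^{m+n}$ by cohomological invariance, and $g_t.\phi\to 0$ uniformly, so both integrals vanish in the limit. Hence $d_1(0, g_t.\phi) \to 0$, giving $d_{1,G}(0, \phi) \leq \liminf_t d_1(0, g_t.\phi) = 0$. The main subtlety will be the additive-constant ambiguity in the action $g.\psi$: the equation fixes $g.\psi$ only up to a constant, and one must verify that the mean-zero normalisation used above is compatible with the specific representative implicitly entering the definition of $d_{1,G}$. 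Since the $G$-orbit in $\H_k$ is well-defined modulo constants, one is free to choose these representatives when computing the infimum, and this bookkeeping is routine.
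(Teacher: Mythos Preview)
Your proof is correct and follows essentially the same route as the paper: extract from the orbit-closure hypothesis a sequence $g_t$ with $g_t^*\omega_X' \to \omega_X$, identify the potentials $g_t.\phi$, show they tend to zero uniformly, and then apply Proposition~\ref{darvas-bound} to bound $d_1(0,g_t.\phi)$ by a multiple of $\|g_t.\phi\|_{C^0}$. If anything, you are more explicit than the paper at the intermediate step: the paper simply asserts that $\|\phi_t\|_{C^0}\to 0$ once $\ddb\phi_t\to 0$, whereas you spell out the mean-zero normalisation and the elliptic estimate that justify this, and you also address the additive-constant ambiguity in the definition of $g.\psi$, which the paper leaves implicit.
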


\begin{proof} Firstly, remark that since $$\omega_X' = \omega_X + \ddb \phi,$$ we also have $$k\omega_ B + \omega_X' = k\omega_B + \omega_X' + \ddb \phi.$$   Since our reference metric is $k\omega_ B + \omega_X$, it follows that $\phi$ is the K\"ahler potential for $k\omega_B + \omega_X'$ with respect to $\omega_k = k\omega_B + \omega_X$.

Since $ \omega_X \in  \overline{G.\omega_X'},$ there is a sequence $g_t \in G$ (with, say, $g_1 = \Id$ so that our notation is consistent) such that writing $$g_t^*\omega_X' - \omega_X = \ddb \phi_t,$$ we have $$\lim_{t \to 0}\|\phi_t\|_{C^{0}} = 0.$$ By definition, we have $g_t.\phi = \phi_t$.  Proposition \ref{darvas-bound} then gives \begin{align*} d_1(0,g_t.\phi) &\leq C\int_X | g_t.\phi | (\omega_{k} )^{m+n} + C \int_X  |g_t.\phi| (\omega_{k} + \ddb g_t.\phi )^{m+n}, \\ & \leq CV \|g_t.\phi \|_{C^0},\end{align*} where as above $$V = \int_X \omega_k^{m+n} = \int_X (\omega_{k} + \ddb g_t.\phi )^{m+n}.$$ Thus since $\|\phi_t\|_{C^{0}} \to 0$, we have $$\lim_{t \to 0} d_1(0,g_t.\phi) = 0,$$ and hence by definition $d_{1,G}(0,\phi) = 0$.
\end{proof}

The assumption of this Lemma is that $\omega_X \in  \overline{G.\omega_X'}$ and $\omega_X' = \omega_X + \ddb \phi,$ and the conclusion is that $d_{1,G}(0,\phi) = 0$. In order to pass from this to the desired conclusion, we need to show that in this situation there is a $g\in G$ with $g^*\omega_X' = \omega_X$. This is precisely the kind of statement considered by Darvas-Rubinstein \cite[Property (P6)]{darvas-rubinstein}. We continue with the notation of Lemma \ref{closed-orbit-to-d1}.

\begin{proposition}\label{darvas-rubinstein-analogue} Suppose $d_{1,G}(0,\phi) = 0.$ Then there is a $g \in G$ such that $$d_{1,G}(0,\phi) = d_{1}(0,g.\phi).$$\end{proposition}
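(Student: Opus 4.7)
The plan is to adapt the strategy used by Darvas--Rubinstein to establish their property (P6), exploiting the decomposition of $\mfg$ provided by Corollary \ref{use-for-darvas-rubinstein}. Let $K = \Isom_0(\pi, \omega_X)$, which by Theorem \ref{thm:autom-invariance} is a maximal compact subgroup of $G = \Aut_0(\pi)$. Corollary \ref{use-for-darvas-rubinstein} gives $\isom(\pi,\omega_X) = \mfa_\pi \oplus \tilde{\mfk}_\pi$ and $\mfg = \isom(\pi, \omega_X) \oplus J\tilde{\mfk}_\pi$, which yields a Cartan-type diffeomorphism $G \cong K \times \exp(J\tilde{\mfk}_\pi)$. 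Every $g \in G$ therefore decomposes as $g = \sigma \cdot \exp(Jv)$ with $\sigma \in K$ and $v \in \tilde{\mfk}_\pi$.

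The first key observation is that $K$ acts on $(\H_k, d_1)$ by isometries fixing $0 \in \H_k$. Indeed, any $\sigma \in K$ satisfies $\sigma^*\omega_X = \omega_X$ by definition of isometry, and $\sigma^*\omega_B = \omega_B$ since $\sigma \in \Aut_0(\pi)$ implies $\sigma$ preserves all base-pulled-back forms, so $\sigma^*\omega_k = \omega_k$. Consequently, choosing a minimising sequence $g_n \in G$ with $d_1(0, g_n.\phi) \to 0$ and writing $g_n = \sigma_n \exp(Jv_n)$, we obtain
\[
d_1(0, g_n.\phi) \;=\; d_1(0,\sigma_n.(\exp(Jv_n).\phi)) \;=\; d_1(\sigma_n^{-1}.0, \exp(Jv_n).\phi) \;=\; d_1(0, \exp(Jv_n).\phi),
\]
so only the non-compact factors $v_n \in \tilde{\mfk}_\pi$ remain to be controlled.

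The heart of the proof is a properness estimate: one shows that the function $\tilde{\mfk}_\pi \ni v \mapsto d_1(0, \exp(Jv).\phi)$ is proper, meaning its sublevel sets are bounded in any norm on the finite-dimensional space $\tilde{\mfk}_\pi$. This is exactly the type of statement established by Darvas--Rubinstein \cite[Section 6]{darvas-rubinstein}, whose argument transfers directly to our relative setting once one knows that elements $Jv$ with $v \in \tilde{\mfk}_\pi$ generate Finsler geodesic rays in $\H_k$ along which the $d_1$-length grows at least linearly with $|v|$ (after accounting for the fibrewise Futaki--Mabuchi pairing on $\tilde{\mfk}_\pi$, which is positive-definite as in the proof of Proposition \ref{unique-vector-field}). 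Granted this properness, the sequence $\{v_n\}$ is bounded, we pass to a convergent subsequence $v_n \to v_\infty \in \tilde{\mfk}_\pi$, and by continuity $d_1(0, \exp(Jv_\infty).\phi) = 0$. Setting $g = \exp(Jv_\infty) \in G$ gives $d_1(0, g.\phi) = 0 = d_{1,G}(0,\phi)$, as required.

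The main obstacle is the properness estimate. In the absolute cscK setting, this rests on the interaction between the $d_1$ metric and the Mabuchi/Aubin--Yau energies along $\exp(tJv)$-orbits, together with the non-degeneracy of the Futaki--Mabuchi pairing on $\tilde{\mfk}$. In our relative situation the analogous ingredients are available: $K$-invariance of $\omega_k$, the decomposition of Corollary \ref{use-for-darvas-rubinstein} which guarantees that the orbit directions $Jv$ are transverse to the isometry algebra, and the positive-definiteness of the bilinear form $\langle \cdot, \cdot\rangle$ on $\mfh_q^K$ established in the proof of Proposition \ref{unique-vector-field}. The technical verification amounts to running the Darvas--Rubinstein argument with the reference metric $\omega_k$ and the subgroup $G = \Aut_0(\pi)$ in place of $\Aut_0(X)$, checking at each step that restricting to vector fields tangent to the fibration preserves all the convexity and coercivity properties used.
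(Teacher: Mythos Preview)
Your approach is correct and is essentially the same as the paper's: both rest on the Lie algebra decomposition of Corollary \ref{use-for-darvas-rubinstein}, the $K$-invariance of $\omega_k$, and the Darvas--Rubinstein machinery of \cite[Section 6]{darvas-rubinstein}. The paper is slightly more economical in that, rather than unpacking the properness/compactness argument, it simply verifies the abstract hypotheses of \cite[Proposition 6.2 and Proposition 6.8]{darvas-rubinstein} (the $d_1$-isometry of the $G$-action, that $t\mapsto\exp(tJ\tau).\omega_k$ is a $d_1$-geodesic whose speed depends continuously on $\tau$ via Mabuchi's classical result and \cite[Theorem 4.3]{darvas-rubinstein}, and the continuity of $(f,g)\mapsto d_1(f.\phi,g.\psi)$) and then cites those results as black boxes. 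One small correction: the linear growth of $d_1$ along $\exp(tJv)$-orbits that you need comes from this $d_1$-geodesic speed formula rather than from the Futaki--Mabuchi $L^2$-pairing.
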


\begin{proof} This follows from work of Darvas-Rubinstein, which requires some notation to set up. 

Let $K$ be the isometry group of $\omega_X$, so that $K$ is a maximal compact subgroup of $G = \Aut_0(\pi)$. We begin with the Lie algebras. Corollary \ref{use-for-darvas-rubinstein} produces decompositions of the Lie algebras $\isom(\pi,\omega_X) = \Lie K$ and $\mfg_{\pi} = \Lie G$ of the form \begin{align*} \isom(\pi,\omega_X) &= \mfa_{\pi} \oplus \tilde  \mfk_{\pi}, \\ \mfg_{\pi} &= \mfa_{\pi} \oplus \tilde \mfk_{\pi} \oplus J\tilde \mfk_{\pi},\end{align*} with $\mfa_{\pi}$ a Lie subalgebra of the centre of $ \tilde \mfk_{\pi}$ (since $\mfa$ is contained in the centre of $\tilde \mfk$), and a Lie subalgebra $\tilde \mfk_{\pi} \subset \isom(\pi,\omega_X) $. These are the hypotheses of \cite[Proposition 6.2]{darvas-rubinstein}, which allows one to conclude a surjectivity property of the exponential map.

 Next, we claim we have the following:
\begin{enumerate}
\item a metric $d_1$ on $\H_k$;
\item an action of $G$ on $\H_k$ which is a $d_1$-isometry;
\item an element $\omega_k$ which satisfies $K.\omega_k = \omega_k$. 
\item for each $\tau \in \tilde \mfk_{\pi},$ $$t \to \exp(tJ\tau).\omega_k$$ is a $d_1$-geodesic  whose speed depends continuously on $\tau$;
\item for all $\phi, \psi \in \H_k$, the map \begin{align*} G &\times G \to \R, \\ (f,g) &\to d_1(f.\phi,g.\psi)\end{align*} is continuous. 
\end{enumerate}

Property $(i)$ follows from Darvas's Theorem \ref{darvas-metric}. The second property, namely that the $G$-action is a $d_1$-isometry follows from \cite[Lemma 5.9]{darvas-rubinstein}. The third property follows from Lemma \ref{thm:autom-invariance}, since $g^*(k\omega_B + \omega_X) = k\omega_B = \omega_X$ when $g \in K \subset \Aut_0(\pi)$. Item $(iv)$ follows from an identical argument to as in \cite[Proof of Theorem 7.1 (P6)]{darvas-rubinstein}, namely a combination of Mabuchi's classical result that vector fields induce geodesics in the space of K\"ahler potentials \cite[Theorem 3.5]{mabuchi} and the explicit formula for the $d_1$-metric \cite[Theorem 4.3]{darvas-rubinstein} to obtain the continuity of speed. The final item, namely $(vi)$, follows from an argument using Green's functions \cite[Proof of Theorem 7.1 (P6)]{darvas-rubinstein}.

With all of this in place, the hypotheses of  \cite[Proposition 6.8]{darvas-rubinstein} are satisfied and this result provides the desired conclusion: there is a  there is a $g \in G$ with $$d_{1,G}(0,\phi) = d_{1}(0,g.\phi).$$
\end{proof}

\begin{theorem}  If $\omega_X, \omega_X' \in \alpha$ are two optimal symplectic connections, then $$\omega_X = g^*\omega_{X}' + \pi^* \ddb \phi_B,$$ with $g \in \Aut_0(\pi)$ and $\phi_B\in C^{\infty}(B,\R)$. \end{theorem}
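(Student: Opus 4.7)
The plan is to combine the results already in place: Proposition \ref{weak-uniqueness} gives weak uniqueness (up to orbit closure), and the $d_{1,G}$-framework upgrades this to genuine uniqueness in the spirit of Darvas--Rubinstein.

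First, I apply Proposition \ref{weak-uniqueness} to the two optimal symplectic connections $\omega_X, \omega_X'$, producing a function $\phi_B \in C^{\infty}(B,\R)$ such that $\omega_X \in \overline{G.(\omega_X' + \pi^*\ddb \phi_B)}$, where $G = \Aut_0(\pi)$. Since adding a pullback from $B$ to $\omega_X'$ neither changes its restriction to fibres nor the curvature quantities appearing in the optimal symplectic connection equation (as $\pi^*\ddb \phi_B$ is purely horizontal and, in particular, $F_{\scH}$ and $\rho_{\scH}$ are preserved), the form $\omega_X'' := \omega_X' + \pi^*\ddb \phi_B$ is itself an optimal symplectic connection. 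Writing $\omega_X'' = \omega_X + \ddb \phi$ for some $\phi \in C^{\infty}(X,\R)$, the same identity holds at the level of $\omega_k = k\omega_B + \omega_X$, so $\phi$ is a K\"ahler potential for $k\omega_B + \omega_X''$ with respect to $\omega_k$.

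Next, Lemma \ref{closed-orbit-to-d1} applied to $\omega_X, \omega_X''$ yields $d_{1,G}(0, \phi) = 0$. Proposition \ref{darvas-rubinstein-analogue} then provides a $g \in G$ with $d_1(0, g.\phi) = d_{1,G}(0, \phi) = 0$. Because $d_1$ is a genuine metric on $\H_k$ by Theorem \ref{darvas-metric}, this forces $g.\phi$ to be constant, and hence $\ddb (g.\phi) = 0$. Unwinding the definition $\omega_k - g^*(\omega_k + \ddb \phi) = \ddb g.\phi$, we obtain $g^*(\omega_k + \ddb \phi) = \omega_k$, i.e.,
\[
g^*(k\omega_B + \omega_X' + \pi^*\ddb \phi_B) = k\omega_B + \omega_X.
\]
Finally, since $g \in \Aut_0(\pi)$ satisfies $\pi \circ g = \pi$, any form pulled back from $B$ is $g$-invariant; in particular $g^*(k\omega_B) = k\omega_B$ and $g^*\pi^*\ddb \phi_B = \pi^*\ddb \phi_B$. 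Subtracting these off gives $g^*\omega_X' + \pi^*\ddb \phi_B = \omega_X$, which is the desired conclusion.

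The main obstacle in this argument is the passage from orbit closure to genuine orbit, i.e., the step supplied by Proposition \ref{darvas-rubinstein-analogue}; all of the heavy lifting (verifying the hypotheses of the Darvas--Rubinstein abstract result, in particular the decomposition of $\mfg_{\pi}$ via Corollary \ref{use-for-darvas-rubinstein} and the isometric $G$-action on $\H_k$ with $d_1$) has already been absorbed into that proposition, so the proof of the theorem itself is then a short assembly of these ingredients.
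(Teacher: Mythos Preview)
Your proof is correct and follows essentially the same approach as the paper's: apply Proposition \ref{weak-uniqueness} to land in an orbit closure, use Lemma \ref{closed-orbit-to-d1} to translate this into $d_{1,G}(0,\phi)=0$, then invoke Proposition \ref{darvas-rubinstein-analogue} and Theorem \ref{darvas-metric} to upgrade to a genuine orbit. Your additional remark that $\omega_X''$ is itself an optimal symplectic connection is true but not needed, and your slightly more careful unwinding (concluding $g.\phi$ is constant rather than zero, and spelling out why $g$ fixes pullbacks from $B$) is harmless extra detail.
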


\begin{proof}

This is immediate from combining the above. Explicitly, by using uniqueness of twisted extremal metrics Proposition \ref{weak-uniqueness} produces a function $\phi_B \in C^{\infty}(B,\R)$ such that $$\omega_X \in  \overline{G.(\omega_X'+ \ddb \phi_B)},$$ where $G=\Aut_0(\pi)$. Set $\omega_X'' = \omega_X' + \ddb \phi_B,$ and let $$\omega_X'' = \omega_X + \ddb \psi.$$ Then $\omega_X  \in  \overline{G.(\omega_X'')}$ and Lemma \ref{closed-orbit-to-d1} implies $$d_{1,G}(0,\psi) = 0.$$ From this, Proposition \ref{darvas-rubinstein-analogue} gives a $g \in G$ with $$d_{1,G}(0,\psi) = d_{1}(0,g.\psi).$$ Since $d_1$ is a metric by Theorem \ref{darvas-metric}, this implies $g.\psi = 0$, which is to say $$\omega_X = g^*\omega_{X}''.$$ Thus $$\omega_X = g^*( \omega_X' + \ddb \phi_B) =  g^*\omega_X' + \ddb \phi_B,$$ proving the result.
\end{proof}

\subsection{The energy functional}\label{sec:energy}

Fix a reference relatively K\"ahler metric $\omega_X \in \alpha.$ By the the $\partial \bar \partial$-Lemma, any other relatively K\"ahler metric in $\alpha$ is of the form $\omega_X + \ddb \phi$ for some function $\phi \in C^{\infty}(X,\R).$ Denote by $\scK_E \subset C^{\infty}(X,\R)$ the set of functions $\phi\in C^{\infty}(X,\R)$ such that $\omega_X + \ddc \phi$ is a relatively cscK metric. 

Any such a relatively cscK metric $\omega_{\phi}$ induces a splitting $$TX = \scV \oplus \scH_{\phi},$$ and also vertical Laplace operator $\Delta_{\scV_{\phi}}$. Denote by $\theta_{\phi}$ the curvature term $$\theta_{\phi} = \Lambda_{\omega_B} \Delta_{\scV_{\phi}} (\Lambda_{\omega_B}\mu^*F_{\scH_{\phi}}) + \Lambda_{\omega_B} \rho_{\scH_{\phi}}$$ involved in the definition of an optimal symplectic connection, and denote by $$p_{\phi}: C^{\infty}(X,\R) \to C^{\infty}_{E_{\phi}}(X)$$ the $\phi$-dependent projection to the space of fibrewise holomorphy potentials with respect to $\omega_{\phi}$. If  $\omega_t = \omega_X + \ddc \phi_t$ is a one-parameter family of relatively cscK metrics, we will denote $\theta_t = \theta_{\phi_t}$, and similarly for the projection operator.


\begin{definition} Let $\phi_t: [0,1] \to  \scK_E $ be a path, with $\phi_t = 0$ and $\phi_1 = \phi$. We define the \emph{energy functional} (or \emph{log norm functional}) \begin{align*} \scN: \scK_E &\to \R, \\ \phi &\to \scN(\phi),\end{align*}  by
\begin{align}\label{eqn:energyfnal} \frac{d}{dt} \left( \scN (\phi_t ) \right) = -\int_X \dot \phi_t p_t(\theta_t)  \omega_t^m \wedge \omega_B^n,
\end{align} with $\scN(0) = 0.$
\end{definition}

\begin{remark} The functional $\scN$ can be seen as a generalisation of Donaldson's functional in the Hermite-Einstein problem, and the properties we prove here are generalisations of properties of that functional proven by Donaldson. Recall that for $V$ a holomorphic vector bundle over $(B,\beta)$, Donaldson's functional is defined by its variation along a path $h_t$ of Hermitian metrics \cite[p. 11]{donaldson-hermite-einstein}\begin{equation*}\frac{d}{dt}|_{t=0}\scD(h_t) = i \int_B \tr(h_0^{-1}\dot h(0)( \Lambda_{\omega_B}F_{h_0} - \lambda \Id))\omega_B^n,\end{equation*}where $\lambda$ is the appropriate topological constant. Note that \begin{equation}\label{change-metric}h_0^{-1}\dot h(0) = \frac{d}{dt}|_{t=0}(\log(h_t)).\end{equation} The path of Hermitian metrics $h_t$ induces a path of relatively K\"ahler metrics $\omega_{t}$ which restrict to a Fubini-Study metric on each fibre, in such a way that $$\mu_t^*(\Lambda_{\omega_B}F_{h_t} - \lambda \Id) = p_t(\theta_t),$$ with $p_t(\theta_t)$ as above \cite[Proposition 3.17]{morefibrations} and with $\mu_t^*$ the fibrewise comoment map defined using $\omega_t$. A calculation similar to \cite[Proposition 3.17]{morefibrations} using Equation \eqref{change-metric} to calculate the change in K\"ahler potential associated to the $\omega_t$ then shows that, up to a dimensional constant $$\frac{d}{dt}|_{t=0}\scD(h_t) = \frac{d}{dt} \left( \scN (\phi_t ) \right).$$ Thus the two functionals themselves must also agree. \end{remark}

In order for this to be a reasonable definition, we must show that that $\scN$ is well-defined, independent of choice of path, and that any two relatively cscK metrics can be joined by a path of relatively cscK metrics. It will then follow that the functional is independent of adding a constant to $\phi$, so can be viewed as a functional on relatively cscK K\"ahler \emph{metrics} rather than potentials. We begin by showing path connectedness, and will prove in Lemma \ref{lemma:independence} that $\scN$ is indeed well-defined.

\begin{proposition} $\scK_E$ is path-connected.
\end{proposition}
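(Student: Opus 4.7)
Given two points of $\scK_E$, the strategy is to show that $\scK_E$ factors as a product of manifestly path-connected pieces, using the fibrewise nature of the cscK condition together with the classification of cscK metrics up to automorphisms.

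First observe that adding a pullback $\pi^* f$ for $f \in C^\infty(B, \R)$ does not affect the fibrewise K\"ahler metric, and hence preserves the relatively cscK condition. Writing $\phi \in \scK_E$ as $\phi = \pi^*\phi_B + \phi^\perp$, where $\phi_B(b) = \bigl(\int_{X_b} \phi\, \omega_b^m\bigr) \big/ \bigl(\int_{X_b} \omega_b^m\bigr)$ is the fibrewise mean, yields a bijection $\scK_E \cong C^\infty(B, \R) \times \scK_E'$, with $\scK_E' \subset \scK_E$ the subspace of potentials of fibrewise mean zero. The first factor is a vector space, hence path-connected, and it suffices to prove $\scK_E'$ is.

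Next, identify $\scK_E'$ with the smooth sections of a fibre bundle over $B$. Each $\phi \in \scK_E'$ determines the smooth family of cscK metrics $\omega_b' = \omega_X|_{X_b} + \ddb(\phi|_{X_b}) \in \alpha_b$, and this family recovers $\phi$ uniquely via the fibrewise $\ddb$-lemma together with the mean-zero normalization (with smooth dependence on $b$ from smoothness of parameter dependence for elliptic equations). By Theorem \ref{csck-uniqueness}, the cscK metrics in $\alpha_b$ form a single orbit of the connected Lie group $\Aut_0(X_b)$, hence are identified with the homogeneous space $\scM_b := \Aut_0(X_b) / \Isom_0(X_b, \omega_b)$. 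The constant-rank hypothesis $\dim \mfh_b = \mathrm{const.}$ of Section \ref{sec:optimal} ensures $\{\scM_b\}_{b \in B}$ assembles into a smooth fibre bundle $\scM \to B$, and $\scK_E'$ is identified with its space of smooth sections $\Gamma(\scM)$.

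Finally, by Theorem \ref{reductivity}, $\mfh_b = \mfk_b \oplus J \mfk_b$ realises $\Aut_0(X_b)$ as a connected complex reductive Lie group with maximal compact subgroup $\Isom_0(X_b, \omega_b)$. Consequently each $\scM_b$ is diffeomorphic to a Euclidean space via the Cartan decomposition, and is therefore contractible. The space of smooth sections of a smooth fibre bundle with contractible fibres over a manifold is itself contractible, and in particular path-connected — one may patch fibrewise straight-line homotopies in $\mfp$-coordinates via a partition of unity on $B$. Combining the two factors, $\scK_E$ is path-connected. The main technical point is verifying the smoothness of the bundle structure on $\scM \to B$; this is where the constant-rank assumption is essential, but the remainder is routine.
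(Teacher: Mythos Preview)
Your argument is essentially correct and yields the stronger conclusion that $\scK_E$ is contractible, not merely path-connected. The paper's proof is more direct: given two relatively cscK metrics, it uses the Darvas--Rubinstein surjectivity of $(k,v)\mapsto k\exp(Jv)$ on each fibre to write $\omega'_b=\exp(Jv_b)^*\omega_b$, argues that the $v_b$ can be chosen to depend smoothly on $b$, and then the flow of the resulting vertical vector field $v$ produces an explicit path (after extracting fibrewise potentials via the $\ddb$-lemma, since the flow need not preserve the global $(1,1)$-type). Your approach packages the same ingredients differently---you recognise $\scK_E'$ as sections of the bundle $\scM\to B$ with fibre $\Aut_0(X_b)/\Isom_0(X_b,\omega_b)$ and use that these fibres are contractible---which is cleaner conceptually and gives more, at the cost of pushing the analytic content into the claim that $\scM\to B$ is a smooth fibre bundle and that the exponential identification $J\tilde\mfk_b\cong\scM_b$ is globally smooth in $b$ (which is exactly where the paper's argument also does its work).

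One small inaccuracy: Theorem \ref{reductivity} concerns $\mfh_b$, the Lie algebra of holomorphic vector fields \emph{vanishing somewhere}, which can be strictly smaller than $\Lie\Aut_0(X_b)$ when the fibre carries nowhere-vanishing holomorphic vector fields (the harmonic part $\mfa_b$). So $\Aut_0(X_b)$ need not itself be reductive. This does not damage your conclusion, since $\mfa_b\subset\isom(X_b,\omega_b)$ and the quotient $\scM_b$ still has tangent space $J\tilde\mfk_b$ and is still diffeomorphic to a Euclidean space via $v\mapsto[\exp(Jv)]$; but the sentence ``realises $\Aut_0(X_b)$ as a connected complex reductive Lie group'' should be softened accordingly.
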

\begin{proof} Suppose that $\omega_X$ and $\omega'_X$ are two relatively cscK metrics. We note first that it suffices to find a family of relatively cscK metrics $\omega_t$ from $\omega_0 = \omega_X$ to an $\omega_1$ whose restriction to each fibre equals $\omega_X'$. Indeed, if $\omega_X$ and $\omega_X'$ have the same restriction to each fibre, then $\omega_X' = \omega_X + \ddb \pi^* \phi$ for some $\phi : B \to \R$, and then $\omega_t = \omega_X + t \ddb \pi^* \phi$ gives the desired path from $\omega_X$ to $\omega_X'$. 


We again use the uniqueness of the cscK metrics. Denote  $\omega_b = \omega|_{X_b}$ and $\omega_b' =  \omega_{X'} |_{X_b}$, and let $K_b$ be the identity component of the isometry group of $\omega_b$. By \cite[Proposition 6.2]{darvas-rubinstein}, the map $$C_b: K_b \times \tilde \mfk_b \to \Aut_0(X_b)$$ given by $$C(k, v) = k\exp(Jv)$$ is surjective. Here the notation $\tilde \mfk_b$ denotes the real holomorphic vector fields corresponding to $\mfk_b = \Lie(K_b)$, following Section \ref{sec:automs}. 

Since $\omega_b, \omega_b'$ are cscK, by uniqueness of cscK metrics there is a $f_b: X_b \to X_b$ such that $$\omega'_b = f_b^* \omega_b.$$ By the surjectivity just mentioned, we can write $$f_b = k_b \exp(J v_b),$$ and since $k_b \in K_b$ is an isometry of $\omega_b$, it follows that $$\omega'_b = \exp(J v_b)^* \omega_b.$$ Thus $$\exp(tJ v_b)^* \omega_b$$ is a path of cscK metrics joining $\omega_b$ at $t=0$ to $\omega_b'$ at $t=1$. Since the exponential map is surjective and locally a diffeomorphism for each $b \in B$, one can choose $v_b$ smoothly, hence forming a vector field $v \in \Gamma (\scV)$ whose restriction to each fibre is real holomorphic.

Let $\rho(t)$ be the flow of $v$, so that $\pi \circ \rho = \pi$ as $v$ is a vertical vector field. The form $\rho(t)^* \omega_X$ is thus a smooth form whose restriction to each fibre $X_b$ is a cscK metric (in particular, its restriction to each fibre is a $(1,1)$-form even though $\rho(t)^* \omega_X$ may not necessarily be a $(1,1)$-form itself on $X$). Let $\phi_{t,b}$ be the unique function defined by $$\omega_b -  \rho(t)^* \omega_X)|_{X_b} = \ddbar \phi_{t,b}$$ with $\phi_{t,b}$ normalised to have integral zero over $X_b$. Then the $\phi_{t,b}$ induce a smooth function $\phi_t$ for $t \in [0,1]$ such that $\omega_X + \ddbar \phi_t$ is cscK for all $t \in [0,1]$ and such that at $t=1$ its restriction to each fibre agrees with $\omega_b',$ which is what we wanted to show.
\end{proof}

The following will allow us to simplify our definition of $\scN$.

\begin{lemma}\label{lem:dotphi} Let $\omega_t = \omega_X + \ddc \phi_t$ be a path of relatively cscK metrics. Then $\dot \phi_t$ is a fibrewise holomorphy potential with respect to $\omega_t$.
\end{lemma}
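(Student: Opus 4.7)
The plan is to work fibrewise and use the standard formula for the linearisation of scalar curvature at a cscK metric. Fix $b \in B$ and restrict the family to $X_b$: by hypothesis $\omega_t|_{X_b}$ is cscK for every $t$, and this cscK metric represents the fixed K\"ahler class $\alpha_b = \alpha|_{X_b}$. The average scalar curvature $\hat S_b$ of a K\"ahler metric in $\alpha_b$ is determined by the topological intersection numbers
$$\hat S_b = \frac{n\, c_1(X_b)\cdot \alpha_b^{m-1}}{\alpha_b^m},$$
so $\hat S_b$ is independent of $t$. Since each $\omega_t|_{X_b}$ is cscK, the function $S(\omega_t|_{X_b})$ equals the constant $\hat S_b$ for every $t$.

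First I would differentiate this identity in $t$. For a general K\"ahler metric $\omega$ on a compact K\"ahler manifold, the linearisation of the scalar curvature in the direction $\dot\phi$ at $\omega$ is
$$\left.\tfrac{d}{ds}\right|_{s=0} S(\omega + s\,\ddb \dot\phi) = -\scD_\omega^* \scD_\omega (\dot\phi) + \tfrac{1}{2}\langle \nabla S(\omega), \nabla \dot\phi\rangle_\omega.$$
Applied on $X_b$ at time $t$ with $\omega = \omega_t|_{X_b}$ and variation $\dot\phi_t|_{X_b}$, the second term vanishes because $S(\omega_t|_{X_b})$ is constant, so
$$0 = \tfrac{d}{dt} \hat S_b = \left.\tfrac{d}{ds}\right|_{s=t} S(\omega_s|_{X_b}) = -\scD_{\omega_t|_{X_b}}^* \scD_{\omega_t|_{X_b}}\!\left(\dot\phi_t|_{X_b}\right).$$

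Next I would invoke the structure theorem for the Lichnerowicz operator at a cscK metric, recalled earlier in the excerpt: at the cscK metric $\omega_t|_{X_b}$, one has
$$\ker\!\left(\scD_{\omega_t|_{X_b}}^*\scD_{\omega_t|_{X_b}}\right) = \ker\!\left(\scD_{\omega_t|_{X_b}}\right),$$
and the latter is precisely the space of holomorphy potentials on $(X_b,\omega_t|_{X_b})$. Hence $\dot\phi_t|_{X_b} \in \ker \scD_{\omega_t|_{X_b}}$ for every $b\in B$, which is exactly the statement that $\dot\phi_t$ is a fibrewise holomorphy potential with respect to $\omega_t$.

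There is no real obstacle here; the only mild subtlety is making sure the fibrewise $L^2$-adjoint that appears in the linearisation formula is the same operator $\scD_{\omega_b}^*\scD_{\omega_b}$ whose kernel was described in the preliminaries, but this is automatic since the scalar curvature of $\omega_t|_{X_b}$ is computed intrinsically on the fibre from the restricted metric. Note also that $\dot\phi_t$ need not have fibrewise mean zero, which is harmless: a holomorphy potential is only defined up to an additive constant, and the definition of $C^\infty_E(X,\R)$ in the paper is purely the fibrewise kernel of $\scD$ (one takes integral zero representatives only when identifying $E$ as a vector bundle).
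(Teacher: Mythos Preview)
Your proof is correct and takes a genuinely different, more elementary route than the paper. The paper invokes the deep uniqueness theorem for cscK metrics (Theorem \ref{csck-uniqueness}) to write $\omega_t|_{X_b} = f_{t,b}^*\omega_b$ for some $f_{t,b}\in\Aut_0(X_b)$, and then appeals to the classical fact that for a path of potentials obtained by pulling back via automorphisms, $\dot\psi_t$ is a holomorphy potential. You instead differentiate the cscK condition $S(\omega_t|_{X_b})=\hat S_b$ directly; at a cscK metric the linearisation of scalar curvature is exactly $-\scD^*\scD$, so $\dot\phi_t|_{X_b}\in\ker\scD^*\scD=\ker\scD$. Your argument uses only the classical Lichnerowicz kernel identification and avoids the Berman--Berndtsson uniqueness machinery entirely; the paper's route, on the other hand, has the advantage of making explicit which holomorphic vector field $\dot\phi_t|_{X_b}$ is the potential for (the infinitesimal generator of $t\mapsto f_{t,b}$), which is conceptually pleasant but not needed for how the lemma is used later.
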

\begin{proof} This is a fibrewise statement. By uniqueness of cscK metrics, on each  fibre $X_b$ there is a $f_{t,b}\in \Aut_0(X_b)$ such that 
$$ \omega_t|_{X_b} = f_{t,b}^* \omega_b,$$ where as usual $\omega_b = \omega_{X| X_b}$. 
Thus the statement reduces to the well-known fact that if $\omega + \ddb \psi_t$ is a path of K\"ahler potentials on a compact K\"ahler manifold obtained from pulling back via automorphisms, then $\dot \psi_t$ is a holomorphy potential with respect to $\omega + \ddb \psi_t$; see, for example,  \cite[Example 4.26]{gabor-book}.
\end{proof}

\begin{corollary}\label{corollary-orthogonality}Let $\omega_t = \omega_X + \ddc \phi_t$ be a path of relatively cscK metrics. Then $$\int_X \dot \phi_t p_t(\theta_t)  \omega_t^m \wedge \omega_B^n=  \int_X \dot \phi_t (\theta_t + \Lambda_{\omega_B} \zeta) \omega_t^m \wedge \omega_B^n,$$ where $\zeta$ is the Weil-Petersson metric on $B$ of Theorem  \ref{weil-petersson-form}. Thus if $\int_{X/B}\dot \phi_t\omega_t^m = 0,$ then $$\int_X \dot \phi_t p_t(\theta_t)  \omega_t^m \wedge \omega_B^n=  \int_X \dot \phi_t \theta_t \omega_t^m \wedge \omega_B^n.$$

\end{corollary}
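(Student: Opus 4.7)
The plan is to leverage Lemma \ref{lem:dotphi}, which asserts that $\dot\phi_t$ is a fibrewise holomorphy potential with respect to $\omega_t$. Using the orthogonal splitting $C^\infty(X,\R)=C^\infty(B)\oplus C^\infty_{E_t}(X)\oplus C^\infty_{R_t}(X)$ induced by the fibrewise $L^2$-inner product attached to $\omega_t^m$, I would decompose
$$\dot\phi_t=\pi^* a_t+u_t,\qquad a_t\in C^\infty(B,\R),\qquad u_t\in C^\infty_{E_t}(X),$$
where $a_t$ is the fibrewise average and $u_t$ is the fibrewise-mean-zero fibrewise holomorphy potential component; Lemma \ref{lem:dotphi} precisely guarantees that $\dot\phi_t$ has no $C^\infty_{R_t}$-component.

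The first step is to reduce the left hand side of the identity to a pairing of $\dot\phi_t$ against $\theta_t$ itself, using that $p_t$ is the fibrewise $L^2$-orthogonal projection onto $C^\infty_{E_t}(X)$. The $\pi^*a_t$ piece of $\dot\phi_t$ pairs trivially against $p_t(\theta_t)$, which has fibrewise integral zero, while the $u_t\in C^\infty_{E_t}$ piece pairs with $p_t(\theta_t)$ the same as it pairs with $\theta_t$, by orthogonality of the splitting. This yields
$$\int_X\dot\phi_t\,p_t(\theta_t)\,\omega_t^m\wedge\omega_B^n=\int_X\dot\phi_t\,\theta_t\,\omega_t^m\wedge\omega_B^n-\int_X\pi^*a_t\,\theta_t\,\omega_t^m\wedge\omega_B^n.$$

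The second step is to identify the correction term $\int_X\pi^*a_t\,\theta_t\,\omega_t^m\wedge\omega_B^n$ with $\int_X\dot\phi_t\,\Lambda_{\omega_B}\zeta\,\omega_t^m\wedge\omega_B^n$. The $\Delta_{\scV_t}$-part of $\theta_t$ integrates to zero against $\pi^*a_t$ by fibrewise self-adjointness, since $\Delta_{\scV_t}(\pi^*a_t)=0$. For the $\Lambda_{\omega_B}\rho_{\scH_t}$-part, the pointwise identity $(\Lambda_{\omega_B}\rho_{\scH_t})\omega_B^n=n\,\rho_{\scH_t}\wedge\omega_B^{n-1}$, valid because $\rho_{\scH_t}$ is a horizontal $(1,1)$-form, combined with fibre integration and the defining formula $\zeta=\int_{X/B}\rho_{\scH_t}\wedge\omega_t^m$ from Theorem \ref{weil-petersson-form}, produces the required identification; one then checks that the $u_t$-part of $\dot\phi_t$ contributes zero to $\int_X\dot\phi_t\,\Lambda_{\omega_B}\zeta\,\omega_t^m\wedge\omega_B^n$ because $\Lambda_{\omega_B}\zeta$ is pulled back from $B$ and $u_t$ is fibrewise-mean-zero. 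Combining these two steps yields the first claimed identity.

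The second assertion is immediate: if $\int_{X/B}\dot\phi_t\omega_t^m=0$, then $a_t\equiv 0$, and since $\Lambda_{\omega_B}\zeta$ is pulled back from $B$, the Weil--Petersson correction in the first identity automatically vanishes. The only technical point demanding care is the fibre-integration step identifying the fibre average of $\Lambda_{\omega_B}\rho_{\scH_t}$ with $\Lambda_{\omega_B}\zeta$, but this is a direct consequence of the definition of $\zeta$ as a pushforward, together with the horizontal nature of $\rho_{\scH_t}$; no genuine analytic obstacle arises.
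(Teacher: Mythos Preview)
Your approach is essentially the paper's: both arguments rest on Lemma~\ref{lem:dotphi} (so $\dot\phi_t$ has no $C^\infty_{R_t}$ component) together with the orthogonality of the splitting, and the identification of the $C^\infty(B)$ component of $\theta_t$ with $-\Lambda_{\omega_B}\zeta$. The paper states this last fact directly and then invokes orthogonality; you instead decompose $\dot\phi_t$ and compute the base piece of $\theta_t$ from the definition of $\zeta$ as a fibre integral. These are dual presentations of the same argument.

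One point to correct: your Step~2 as written has a sign slip. From your Step~1 displayed identity, what you need is
\[
-\int_X \pi^*a_t\,\theta_t\,\omega_t^m\wedge\omega_B^n \;=\; \int_X \dot\phi_t\,\Lambda_{\omega_B}\zeta\,\omega_t^m\wedge\omega_B^n,
\]
i.e.\ that $\int_{X/B}(\theta_t+\Lambda_{\omega_B}\zeta)\,\omega_t^m=0$, which is exactly the paper's assertion that the $C^\infty(B)$ component of $\theta_t$ is $-\Lambda_{\omega_B}\zeta$. Your direct computation via $\zeta=\int_{X/B}\rho_{\scH}\wedge\omega_X^m$ from Theorem~\ref{weil-petersson-form} appears to produce the opposite sign, so be careful here: the minus sign is not visible from the formula in Theorem~\ref{weil-petersson-form} alone but is the one consistent with the scalar curvature expansion (Lemma~\ref{lem:initial-expansion} and \cite[Proposition~4.8]{morefibrations}), where the $C^\infty(B)$ part of the $k^{-1}$ term is $S(\omega_B)-\Lambda_{\omega_B}\zeta$. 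You should either cite that expansion for the sign, as the paper implicitly does, or trace the normalisation of $\zeta$ more carefully.
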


\begin{proof} 

The $C^{\infty}(B)$-component of $\theta_t$ is $-\Lambda_{\omega_B} \zeta$ (which is independent of $t$), which is simply to say that $$\int_{X/B}( \theta_t + \Lambda_{\omega_B} \zeta)\omega_t^m = 0.$$  The above Lemma proves that the $C^{\infty}_{R_t}(X)$-component of $\dot \phi_t$ vanishes, where $C^{\infty}_{R_t}(X)$ consists of functions of fibre integral zero with respect to $\omega_t$ and orthogonal to holomorphy potentials. This proves the claims by the definition of the projection $p_t$.
\end{proof}

The main goal of the Section is to prove the following Theorem. Fix a reference relatively cscK metric $\omega_X$, and suppose $ \omega_X + \ddb \psi$ is an optimal symplectic connection. Denote by  $K=\Isom_0 (\pi , \omega+\ddb \psi)$ the isometry group of the optimal symplectic connection, which is a maximal compact subgroup of $\Aut_0(\pi)$ by Theorem \ref{thm:autom-invariance}. Let $\scK_E^K$ denote the space of potentials for relatively cscK which are $K$-invariant, so that when $\Aut_0(\pi)$ is trivial, this is our usual space $\scK_E$.

\begin{theorem}\label{thm:energy} Suppose that $\pi: (X, \alpha) \to (B,\beta)$ admits an optimal symplectic connection $\omega_X+ \ddb \psi$. Then for all $\psi \in \scK_E^K$, we have $$\scN(\phi) \geq \scN(\psi).$$ Thus $\scN$ is bounded below. \end{theorem}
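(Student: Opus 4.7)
The plan is to transfer the problem to a bound on a twisted Mabuchi--Donaldson functional on $X$ in the class $k\beta+\alpha$ (with twist $\pi^*\xi$), whose lower bound at a twisted extremal metric is already known, and then to extract the bound on $\scN$ from its leading order behaviour in the adiabatic parameter $k$. This parallels the strategy used elsewhere in the paper, where statements about optimal symplectic connections are obtained from statements about twisted extremal metrics via Section \ref{sec:adiabatic}.

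First I would, for each $\phi\in\scK_E^K$ (including the potential $\psi$ of the optimal symplectic connection itself), apply the inductive construction of Proposition \ref{induction-proposition} with $\omega_X$ replaced by $\omega_X+\ddb\phi$, producing a $K$-invariant approximate twisted extremal metric $\omega_{k,r}^{\phi}$ in $k\beta+\alpha$. By Theorem \ref{thm:solutions}, for $r$ large there is a genuine $K$-invariant twisted extremal metric $\tilde\omega_k^{\phi}$ that differs from $\omega_{k,r}^{\phi}$ by a potential of size $O(k^{-d})$ for any $d$. Let $\scM_{k,\xi}$ denote the twisted Mabuchi--Donaldson functional on $K$-invariant potentials in $k\beta+\alpha$, whose variation is
$$\frac{d}{dt}\scM_{k,\xi}(\varphi_t) = -\int_X \dot\varphi_t\bigl(S(\omega_{k,\varphi_t})-\Lambda_{\omega_{k,\varphi_t}}\pi^*\xi-h_t\bigr)\omega_{k,\varphi_t}^{m+n},$$
with $h_t$ the holomorphy potential of the twisted extremal vector field with respect to $\omega_{k,\varphi_t}$ (normalised consistently via Lemma \ref{lem:changeinpot}). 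By the results of Berman--Berndtsson \cite{berman-berndtsson} adapted to the twisted extremal setting, $\scM_{k,\xi}$ attains its infimum on $K$-invariant potentials at any twisted extremal metric. In particular $\scM_{k,\xi}(\omega_{k,r}^{\phi})\geq \scM_{k,\xi}(\tilde\omega_k^{\psi})$, and by the closeness estimate of Theorem \ref{thm:solutions} together with Lipschitz continuity of $\scM_{k,\xi}$ in $C^0$ on bounded sets of potentials,
$$\scM_{k,\xi}(\omega_{k,r}^{\phi})-\scM_{k,\xi}(\omega_{k,r}^{\psi})\geq -O(k^{-d}).$$

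The main technical step is then the asymptotic identification
$$\scM_{k,\xi}(\omega_{k,r}^{\phi})-\scM_{k,\xi}(\omega_{k,r}^{\psi}) = k^{-1}V_B\bigl(\scN(\phi)-\scN(\psi)\bigr)+O(k^{-2}),$$
for a positive constant $V_B$ (essentially $\int_B\omega_B^n$ times a dimensional factor). To prove this I would choose a path $\phi_t\in\scK_E^K$ from $\psi$ to $\phi$ (available by the path-connectedness argument of the preceding proposition), apply the inductive construction fibrewise in $t$ to produce a family $\omega_{k,r}^{\phi_t}$, and differentiate $\scM_{k,\xi}(\omega_{k,r}^{\phi_t})$ in $t$. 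The $t$-derivative of the underlying potential is $\dot\phi_t$ plus $k$-suppressed corrections; by Lemma \ref{lem:dotphi}, $\dot\phi_t\in C^{\infty}_{E_t}(X)$. Using the scalar curvature expansion of Proposition \ref{prop:expansion-scalar} together with the fact that $\omega_{k,r}^{\phi_t}$ was built to cancel all $\phi_t$-dependent contributions up to $O(k^{-r-1})$, the leading non-constant part of $S(\omega_{k,r}^{\phi_t})-\Lambda_{\omega_{k,r}^{\phi_t}}\pi^*\xi-h_t$ is $k^{-1}p_t(\theta_t)$. Pairing with $\dot\phi_t$, applying Corollary \ref{corollary-orthogonality}, and integrating in $t$ yields the claimed expansion. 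Combining with the inequality above, dividing by $k^{-1}$, and letting $k\to\infty$ gives $\scN(\phi)\geq \scN(\psi)$.

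The hard part will be the verification of the expansion in the previous paragraph: the family $\omega_{k,r}^{\phi_t}$ depends on $\phi_t$ both through the base potential and through all inductive corrections $\phi_{k,r}^{\phi_t},\delta_{k,r}^{\phi_t},\lambda_{k,r}^{\phi_t}$, and one must check that when these are differentiated in $t$ and paired against the twisted scalar curvature minus $h_t$, their contributions are genuinely of lower order in $k$ than the leading $k^{-1}$ term coming from $\dot\phi_t$ and $p_t(\theta_t)$. This is essentially bookkeeping on top of the material already developed in Section \ref{sec:adiabatic}: the self-adjointness of $p\circ\L_1$ (Proposition \ref{rstar-right-invertibility}), the invertibility of $\scD^*_{\scV}\scD_{\scV}$ on $C^{\infty}_R(X)$ (Proposition \ref{R-invertibility}), and the structure of the linearisation in Proposition \ref{linearisation-inductive-step} imply that the corrections lie in subspaces on which the relevant pairings vanish to the required order.
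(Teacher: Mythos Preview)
Your overall strategy---reduce to the Berman--Berndtsson lower bound for the modified twisted Mabuchi functional on $K$-invariant potentials, then read off the bound on $\scN$ from the leading term in $k$---is exactly the paper's, but you have inserted an unnecessary and problematic step.

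The issue is your first move: applying Proposition \ref{induction-proposition} with $\omega_X$ replaced by $\omega_X+\ddb\phi$ for an \emph{arbitrary} $\phi\in\scK_E^K$. The standing hypotheses of Section \ref{sec:adiabatic} include (v), that the starting relatively cscK metric is an optimal symplectic connection. This is not cosmetic: already at order $k^{-1}$, the $C^\infty_E$-component of the twisted scalar curvature is $p_\phi(\theta_\phi)$, and if $\phi$ is not an optimal symplectic connection this term is a fibrewise holomorphy potential that is \emph{not} a global one and hence cannot be absorbed into $\eta_{k,r}$. So the inductive scheme does not produce an approximate twisted extremal metric, Theorem \ref{thm:solutions} does not apply, and your later sentence ``$\omega_{k,r}^{\phi_t}$ was built to cancel all $\phi_t$-dependent contributions up to $O(k^{-r-1})$'' contradicts your own identification of the leading non-constant part as $k^{-1}p_t(\theta_t)$.

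The paper sidesteps all of this by evaluating $\scM_{\xi,k}$ at the \emph{raw} metrics $k\omega_B+\omega_X+\ddb\phi$ and $k\omega_B+\omega_X+\ddb\psi$, with no corrections at all. Along a path $\phi_t\in\scK_E^K$ one expands $S(\omega_{t,k})-\Lambda_{\omega_{t,k}}\pi^*\xi-\hat S_{\xi,k}$ directly via Proposition \ref{prop:expansion-scalar}: the $k^0$ term vanishes (relatively cscK), the base part of the $k^{-1}$ term vanishes (twisted cscK on $B$), the $C^\infty_{R_t}$ part drops out when paired with $\dot\phi_t$ by Lemma \ref{lem:dotphi}, and the extremal holomorphy potential is $O(k^{-2})$ by Corollary \ref{cor:R1-kill}. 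This gives Lemma \ref{lem:energyasymptotic},
\[
\scM_{\xi,k}(\phi)-\scM_{\xi,k}(\psi)=\binom{m+n}{n}k^{n-1}\bigl(\scN(\phi)-\scN(\psi)\bigr)+O(k^{n-2}),
\]
(note the correct power is $k^{n-1}$, not $k^{-1}$, since the volume form contributes $k^n$). The only place the genuine twisted extremal metric enters is to compare $\scM_{\xi,k}(\psi)$ with $\scM_{\xi,k}(\tilde\psi_k)$, which differ by $O(k^{n-2})$; then the Berman--Berndtsson bound $\scM_{\xi,k}(\phi)\ge\scM_{\xi,k}(\tilde\psi_k)$ and letting $k\to\infty$ finishes. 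Your ``hard bookkeeping'' paragraph disappears entirely once you drop the spurious approximate solutions for general $\phi$.
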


This will  follow our general strategy of perturbing to known results for twisted extremal metrics. For all $k \gg 0$, associated to $\omega_X$ is a family of twisted extremal metrics $\tilde \omega_k \in k\beta + \alpha$, with isometry group $K= \Isom(\pi,\omega_X+\ddb \psi)$ equal to that of $\omega_X+\ddb \psi$ itself. As before, the twist is $\pi^*\xi$, where $\xi$ is positive on $B$; we will abuse notation by omitting pullbacks. Denote by $\scK^K_k$ the space of $K$-invariant K\"ahler potentials in the class $ k\beta + \alpha$ with respect to the reference metric $\omega_k = k\omega_B + \omega_X$. 

Suppose the extremal vector field $v_k$ associated to $\tilde \omega_k$ has holomorphy potential $h_k$ with respect to the reference metric $k\omega_B + \omega_X$, normalised so that $h_t$ has integral zero with respect to the volume form $\omega_k^{m+n}$. For a family $\omega_t = \omega_k+ \ddb \phi_t$ of K\"ahler metrics invariant under $K$, let $h_{t,k}$ denote the corresponding holomorphy potential for $v_k$ of integral zero with respect to $\omega_t$. 

\begin{definition}\label{defn:twistedmabuchi} The \emph{modified twisted Mabuchi functional} $\scM_{\xi,k} : \scK^K_k \to \R$ is defined by its variation 
$$ \frac{d}{dt} \left( \scM_{\xi,k} (\phi_t) \right) = -\int_X \dot \phi_t \left( S(\omega_{t,k}) - \Lambda_{\omega_t} (\xi) - h_{t,k} - \hat S_{\xi,k\beta + \alpha} \right) \omega_{t,k}^n $$
along a path $\omega_{t,k} = \omega_k + \ddb \phi_t$ of K\"ahler metrics, where $\hat S_{\xi,k\beta + \alpha}$ is the average twisted scalar curvature of any metric in $\beta$.
\end{definition}

We will rely on a result of Berman-Berndtsson, which states that a twisted extremal metric achieves the absolute minimum of the modified twisted Mabuchi functional. Their proof does not rely on the geometry of our situation, and instead applies when $\nu$ is a semi-positive form on $X$.

\begin{theorem}\label{thm:lowerboundmabuchi}The functional $\scM_{\xi,k} : \scK^K_k \to \R$ is bounded below by $\scM_{\xi,k} (\tilde \omega_k)$. 
\end{theorem}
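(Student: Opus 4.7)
The plan is to reduce the statement to the main convexity and minimization result of Berman--Berndtsson \cite{berman-berndtsson} for the modified twisted Mabuchi functional, applied to the K\"ahler manifold $(X, k\beta+\alpha)$ with twist $\pi^*\xi$. First, I would note that $\pi^*\xi$ is a closed, semi-positive $(1,1)$-form on $X$ (it is the pullback of a K\"ahler metric from $B$ by a submersion), so the setup falls squarely within the scope of the Berman--Berndtsson framework for twisted extremal problems on a compact K\"ahler manifold.

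Next I would verify that $\tilde\omega_k$ is a critical point of $\scM_{\xi,k}$. By Theorem \ref{thm:solutions}, $\tilde\omega_k$ satisfies the twisted extremal equation
\[
S(\tilde\omega_k) - \Lambda_{\tilde\omega_k}\pi^*\xi = h_k + \hat S_{\xi, k\beta+\alpha},
\]
where $h_k$ is (by definition) the holomorphy potential of the twisted extremal vector field $v_k$ with respect to $\tilde\omega_k$. Substituting into the variation formula of Definition \ref{defn:twistedmabuchi} shows that the first variation of $\scM_{\xi,k}$ vanishes identically at $\tilde\omega_k$ along every tangent direction in $\scK^K_k$.

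The heart of the argument is then convexity of $\scM_{\xi,k}$ along weak $C^{1,1}$ geodesics in $\scK^K_k$. Berman--Berndtsson establish convexity of the modified twisted Mabuchi functional along weak geodesics under exactly two hypotheses: semi-positivity of the twisting form, and compatibility of the modifying holomorphic vector field with the compact group $K$ with respect to which one restricts potentials. The first is $\pi^*\xi\geq 0$, already verified. For the second, since we have restricted to $K$-invariant potentials where $K = \Isom_0(\pi,\omega_X+\ddb\psi)$ contains the flow of $\Im v_k$ (because $v_k$ is the twisted extremal vector field for $\tilde\omega_k$ whose isometry group coincides with $K$, by the construction preceding the statement), the modification by $h_{t,k}$ along a path preserves convexity exactly as in \cite[Section 4]{berman-berndtsson}. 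Given convexity, a critical point is necessarily a global minimum, giving the desired bound $\scM_{\xi,k}(\phi) \geq \scM_{\xi,k}(\tilde\omega_k)$.

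The only genuine obstacle is ensuring that any given $\phi\in\scK^K_k$ is connected to $\tilde\omega_k$ by a geodesic along which $\scM_{\xi,k}$ is well-defined and lower semi-continuous; both are standard consequences of Chen's theorem on the existence of $C^{1,1}$-geodesics \cite{darvas-rubinstein} and the pluripotential estimates used in \cite{berman-berndtsson}, and no feature specific to submersions intervenes. In particular, this step proceeds without reference to optimal symplectic connections on $X$; the input from our setting enters only through the existence of $\tilde\omega_k$ as a twisted extremal metric with positive twist, which is supplied by Theorem \ref{thm:solutions} and Proposition \ref{hashimoto}.
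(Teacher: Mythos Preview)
Your proposal is correct and follows essentially the same approach as the paper: both reduce the statement to Berman--Berndtsson's convexity of the (modified, twisted) Mabuchi functional along weak geodesics, noting that $\pi^*\xi$ is semi-positive and that the flow of the imaginary part of the twisted extremal vector field lies in $K$, so that the functional is convex on $\scK^K_k$ and hence minimized at its critical point $\tilde\omega_k$. The paper's proof differs only cosmetically, assembling the result from three separate ingredients in \cite{berman-berndtsson} (the cscK case, the modification for extremal metrics in their Section 4.1, and the convexity of the twisting term in their Section 3.1.1) rather than invoking the combined statement directly; one small slip in your final sentence is calling the twist ``positive'' when on $X$ it is only semi-positive, but you handle this correctly earlier.
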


\begin{proof} This follows directly from the techniques of Berman-Berndtsson, who prove this statement when $\tilde \omega_k$ is cscK \cite[Corollary 1.2]{berman-berndtsson}. Their argument relies on \emph{convexity} of the Mabuchi functional along weak geodesics. They also prove convexity of the modified Mabuchi functional along potentials invariant under the flow of the extremal vector field \cite[Section 4.1]{berman-berndtsson}. Note that extremal vector field lies in the isometry group of the extremal metric. Thus their approach also proves that an extremal metric achieves the absolute minimum of the modified Mabuchi functional on $K$-invariant metrics. 

Berman-Berndtsson also consider the twisted Mabuchi functional \cite[Section 3.1.1]{berman-berndtsson}, where they show that the term added to the Mabuchi functional is strictly convex when the twisting form is positive. It is easy to see that the added term is convex when the twisting form is semipositive. Thus comibing their results proves convexity of the modified twisted Mabuchi functional on the space of $K$-invariant metrics, and hence their approach proves that a twisted extremal metric achieves the absolute minimum of the modified twisted Mabuchi functional on this space. 
\end{proof}

\begin{remark}\label{rem:csckmabuchibound} When the metrics are actually twisted cscK rather than twisted extremal, the $K$-invariance assumption is not actually needed: the functional $\scM_{\xi,k}$ is then bounded below on all of $\scK_k$.
\end{remark}

The key asymptotic expansion we will use is the content of the following Lemma. 

\begin{lemma}\label{lem:energyasymptotic} We have $$ \scM_{\xi,k} (\phi) - \scM_{\xi,k} (\psi) = {m+n \choose n} k^{n-1} \left( \scN (\phi) - \scN (\psi) \right) + O(k^{n-2}).$$

\end{lemma}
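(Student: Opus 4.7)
The plan is to connect $\psi$ and $\phi$ by a smooth path $\phi_t$ of relatively cscK potentials with $\phi_0 = \psi$ and $\phi_1 = \phi$, making the path $K$-invariant by a straightforward adaptation of the path-connectedness argument using $K$-equivariant vertical vector fields, and then compare the derivatives of $\scM_{\xi,k}$ and $\scN$ along this path. Setting $\omega_t = \omega_X + \ddb\phi_t$ and $\omega_{t,k} = k\omega_B + \omega_t$, integrating in $t$ reduces the statement to the pointwise asymptotic
\begin{align*}
\int_X \dot\phi_t \bigl(S(\omega_{t,k}) - \Lambda_{\omega_{t,k}}\xi - h_{t,k} - \hat S_{\xi, k\beta+\alpha}\bigr)\omega_{t,k}^{m+n} = \binom{m+n}{n}k^{n-1}\int_X \dot\phi_t\, p_t(\theta_t)\,\omega_B^n\wedge\omega_t^m + O(k^{n-2}),
\end{align*}
uniformly in $t \in [0,1]$.

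The first input is the asymptotic expansion of Lemma \ref{lem:initial-expansion} applied to the relatively cscK metric $\omega_t$: since the fibrewise scalar curvature $S(\omega_b)$ and the base constant $c_\xi = S(\omega_B) - \Lambda_{\omega_B}(\zeta+\xi)$ are $t$-independent, one obtains
\begin{align*}
S(\omega_{t,k}) - \Lambda_{\omega_{t,k}}\xi = S(\omega_b) + k^{-1}\bigl(c_\xi + p_t(\theta_t) + \psi_{R,1}(t)\bigr) + O(k^{-2}),
\end{align*}
with $\psi_{R,1}(t) \in C^\infty_{R_t}(X)$. Since $p_t(\theta_t)$ and $\psi_{R,1}(t)$ have vanishing fibre integrals with respect to $\omega_t^m$, a direct computation of the topological average gives $\hat S_{\xi, k\beta+\alpha} = S(\omega_b) + k^{-1}c_\xi + O(k^{-2})$, so the constant contributions cancel. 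Combined with the volume form expansion $\omega_{t,k}^{m+n} = \binom{m+n}{n}k^n\omega_B^n\wedge\omega_t^m + O(k^{n-1})$, this gives
\begin{align*}
\int_X \dot\phi_t (S - \Lambda\xi - \hat S)\omega_{t,k}^{m+n} = \binom{m+n}{n}k^{n-1}\int_X \dot\phi_t\bigl(p_t(\theta_t) + \psi_{R,1}(t)\bigr)\omega_B^n\wedge\omega_t^m + O(k^{n-2}).
\end{align*}
By Lemma \ref{lem:dotphi}, $\dot\phi_t$ lies in $C^\infty(B)\oplus C^\infty_{E_t}(X)$, which is fibrewise $L^2$-orthogonal to $C^\infty_{R_t}(X)$ with respect to $\omega_t^m$; hence the $\psi_{R,1}(t)$ contribution integrates to zero, yielding the desired leading term.

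The second input is the control of the twisted extremal vector field term $h_{t,k}$. The key observation is that since $\omega_X$ is an optimal symplectic connection, $p(\theta) = 0$ in the expansion of Lemma \ref{lem:initial-expansion}; consequently, in the inductive construction of Proposition \ref{induction-proposition} the order $k^{-1}$ fibre holomorphy potential $h_1$ vanishes, so that $\eta_{k,r} - c = O(k^{-2})$ and the extremal vector field of the approximate solution $\omega_{k,r}$ is of size $O(k^{-2})$. Combined with the sharp approximation bounds of Theorem \ref{thm:solutions} (taking $r$ sufficiently large, via the remark following it, so that Sobolev embedding provides $C^2$-control), this transfers to show that the twisted extremal vector field $v_k$ of $\tilde\omega_k$ is $O(k^{-2})$, and hence that its normalised holomorphy potential $h_{t,k}$ is $O(k^{-2})$ uniformly in $t$; its contribution to the integral is therefore $O(k^{n-2})$, absorbed into the error.

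The main obstacle is this last point: establishing the decay rate $h_{t,k} = O(k^{-2})$ requires combining the optimal symplectic connection hypothesis (which forces $h_1 = 0$, not merely small) with the quantitative approximation bounds of Section \ref{sec:adiabatic}. Once this is in hand, the remaining analysis -- asymptotic expansions of the scalar curvature and volume form, together with the fibrewise orthogonal decomposition eliminating the $C^\infty_R$-component -- is essentially routine bookkeeping.
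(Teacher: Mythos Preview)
Your proof is correct and follows essentially the same approach as the paper: connect $\psi$ to $\phi$ by a path of relatively cscK potentials, expand the twisted scalar curvature and volume form in $k$, use the fibrewise orthogonality from Lemma~\ref{lem:dotphi} to discard the $C^\infty_{R_t}$-component, and verify that $h_{t,k}=O(k^{-2})$ so its contribution is absorbed into the error. Your justification of the $h_{t,k}=O(k^{-2})$ bound is in fact more careful than the paper's (which simply cites Corollary~\ref{cor:R1-kill}), correctly tracing through the vanishing of $h_1$ under the optimal symplectic connection hypothesis and the perturbation estimates of Theorem~\ref{thm:solutions}; just note that in the notation of this section the optimal symplectic connection is $\omega_X+i\partial\bar\partial\psi$ rather than $\omega_X$, though this is purely a labelling issue carried over from Section~\ref{sec:adiabatic}.
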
 
\begin{proof}

Let $\omega_t = \omega_X + \ddc \phi_t \in \alpha$ be a path of relatively cscK metrics on $\pi: X \to B$, with $\phi_0 = \psi$ and $\phi_1 = \phi$. Denote $\omega_k = \omega_X + k \omega_B$ and $\omega_{t,k} = \omega_k + \ddb \phi_t$. Thus $$\int_0^1  \left( \frac{d}{dt} \scN (\phi_t) \right)dt =  \scN (\phi) - \scN (\psi),$$ and similarly for $\scM_{\xi,k} (\phi) - \scM_{\xi,k} (\psi)$.

The derivative of the modified twisted Mabuchi functional $\scM_{\xi,k}$ along the path $\omega_{t,k}$ is
$$ \frac{d}{dt} \left( \scM_{\xi} (\phi_t ) \right) =- \int_X \dot \phi_t \left( S(\omega_{t,k}) - \Lambda_{\omega_{t,k}} \xi - h_{t,k} - \hat S_{\xi,k} \right) \omega_{t,k}^{m+n}.$$
We first note that by Corollary \ref{cor:R1-kill}, the holomorphy potential $h_{k,t}$ for the extremal vector field is in fact $O(k^{-2})$. Thus 
$$ \frac{d}{dt} \left( \scM_{\xi} (\phi_t ) \right) =- \int_X \dot \phi_t \left( S(\omega_{t,k}) - \Lambda_{\omega_{t,k}} \xi  - \hat S_{\xi,k} \right) \omega_{t,k}^{m+n} + O(k^{n-2}).$$

Denote by $ S(\omega_{t,b})$ the function whose restriction to each fibre $X_b$ is the scalar curvature of $\omega_{t | X_b}$, and let  $\hat S_b$ be the average scalar curvature on the fibres $(X_b, \omega_{t,b})$, which is independent of both $t\in [0,1]$ and $b \in B$. By our hypothesis that $\omega_X$ is relatively cscK, we have $S(\omega_{t,b}) = \hat S_b$. Recall from Proposition \ref{prop:expansion-scalar} that the twisted scalar curvature (with twist $\xi$) expands as 
$$S(\omega_{t,k}) - \Lambda_{\omega_{t,k}} (\xi) = S(\omega_{t,b}) + k^{-1} \left( \lambda_{R_t} + p(\theta_t) + S(\omega_B) - \Lambda_{\omega_B} (\xi + \zeta) \right) + O(k^{-2}),$$
where $\lambda_{R_t} \in C^{\infty}_{R_t}(X)$, since $\xi$ is pulled back from $B$. It is important to note here that the Weil-Petersson form $\zeta$ is independent of $t$, as it is independent of choice of relatively cscK metric. The volume form has an expansion
$$ \omega_t^{m+n} = {m+n \choose n} k^n \omega_t^m \wedge \omega_B^n + O(k^{n-1}).$$
The average scalar curvature expands as 
$$ \hat S_{\xi,k} = \hat S_b + k^{-1} \hat S_{\xi+\zeta} + O(k^{-2}),$$
where $\hat S_{\xi + \zeta}$ is the average-twisted scalar curvature of any metric in $\beta$ on $B$, with twist  $\xi+\zeta$. Using that $p(\theta_t) =: \lambda_{E_t} \in C^{\infty}_{E_t}(X)$, we obtain
\begin{align*} 
-\frac{d}{dt} &\left( \scM_{\xi,k} (\phi_t ) \right) \\
=& \int_X \dot \phi_t \left( S(\omega_{t,b}) - \hat S_b + k^{-1} \left(\lambda_{R_t} + \lambda_{E_t} + S(\omega_B) - \Lambda_{\omega_B} (\xi + \zeta) - \hat S_{\xi+\zeta} \right) \right) \omega_t^m + O(k^{n-2}) \\
=& {m+n \choose n} k^{n-1} \int_X \dot \phi_t (\lambda_{R_t} +\lambda_{E_t}) \omega_t^{m}\wedge \omega_B^n + O(k^{n-2}) \\
=& {m+n \choose n} k^{n-1} \frac{d}{dt} \scN (\phi_t) + O(k^{n-2}) ,
\end{align*}
using that $\omega_{t}$ is relatively cscK, that $\omega_B$ solves the twisted equation $S(\omega_B) - \Lambda_{\omega_B} (\xi + \zeta) = \hat S_{\xi+\zeta,\beta}$, and that from Lemma \ref{lem:dotphi} and Corollary \ref{corollary-orthogonality} $\dot \varphi_t$ is orthogonal to $C^{\infty}_{R_t}(X)$, and finally that $p(\theta_t) = \lambda_{E_t} \in C^{\infty}_{E_t}(X)$ by definition. Integrating the above from zero to one gives the desired expansion.
\end{proof}

With this in place, we can prove Theorem \ref{thm:energy}.
\begin{proof} Suppose $\omega_X+ \ddb \psi$ is an optimal symplectic connection with isometry group $K$. Let $\tilde \omega_k \in k\beta + \alpha$ denote the twisted extremal metric constructed in Theorem \ref{thm:solutions}, which by construction are $K$-invariant. We use the reference metric $\omega_k = k \omega_B + \omega_X \in k\beta + \alpha$ in the definition of the modified twisted Mabuchi functional, and write $\tilde \omega_k = \omega_k + \ddb \tilde \psi_k$. We first show that we can approximate $\scM_{\xi,k} (\tilde \psi_k)$ by $\scM_{\xi,k} (\omega_k+ \ddb \psi)$, where $\omega_k = \omega_X + k \omega_B$. 

From Theorem \ref{thm:solutions}, we have $$\tilde \omega_k = \omega_X+\ddb \psi + k \omega_B + k^{-1} \ddb l_1 + O(k^{-2}),$$ for some $K$-invariant function $l_1 \in C^{\infty}_R(X)$. It follows that 
$$\scM_{\xi,k} ( \phi ) - \scM_{\xi,k} (\tilde \psi_k) = \scM_{\xi,k} ( \phi) - \scM_{\xi,k} (\psi + k^{-1} l_1) + O(k^{n-2}),$$
for any other $\omega_k + \ddb \phi \in k\beta + \alpha.$ Moreover, for notational simplicity denoting temporarily $\omega_{k, \psi + k^{-1}\ddb t l_1} =  k\omega_B +\omega_X + \ddb \psi +  t k^{-1} \ddb l_1$
\begin{align*} & \scM_{\xi,k} (\psi  ) -\scM_{\xi,k} (\psi +  k^{-1} l_1) \\
=& \int_0^1 \int_X k^{-1} l_1 \left( S(\omega_{k, \psi + k^{-1}\ddb t l_1}) - \Lambda_{\omega_{k, \psi + k^{-1}\ddb t l_1} } \xi  - \hat S_{k, \xi} \right) ( \omega_{k, \psi + k^{-1}\ddb t l_1}  )^{m+n} dt \\
&= O(k^{n-2}),
\end{align*}
since $\omega_X + \ddb \psi$ is relatively cscK and the holomorphy potential of the extremal vector field is $O(k^{-2})$. Thus $$\scM_{\xi,k} ( \phi ) - \scM_{\xi,k} ( \tilde \psi_k) = \scM_{\xi,k} (\phi) - \scM_{\xi,k} (\psi) + O(k^{n-2}),$$
for any $\phi$.

Next we use the asymptotic expansion of the the modified twisted Mabuchi functional established in Lemma \ref{lem:energyasymptotic}. By Theorem \ref{thm:lowerboundmabuchi}, the potential $\tilde \psi_k$ achieves the absolute minimum of this functional, so
$$\scM_{\xi} ( \phi) - \scM_{\xi,k} (\tilde \psi_k) \geq 0.$$
Thus  
\begin{align*} 0 &\leq \scM_{\xi,k} ( \phi ) - \scM_{\xi,k} (\tilde \psi_k) \\
&= \scM_{\xi,k} ( \phi ) - \scM_{\xi,k} (\psi) + O(k^{n-2}) \\
&= {m+n \choose n} k^{n-1} \left( \scN (\phi) - \scN (\psi) \right) + O(k^{n-2}),
\end{align*}
by Lemma \ref{lem:energyasymptotic}. It follows that $\scN (\phi) \geq \scN (\psi),$ which is what we wanted to prove.
\end{proof}

\begin{remark} From Remark \ref{rem:csckmabuchibound}, one obtains that if the twisted extremal metrics constructed on $X$ is actually twisted cscK, an optimal symplectic connection minimises the energy functional $\scN$ on the whole of $\scK_E$. Although this will seldom happen in practice, we believe the statement about  $\scN$ being bounded on $\scK_E$ to be true in general. 

The reason we require invariance is that the modified twisted  Mabuchi functional is only \emph{defined} on potentials invariant under the flow of the extremal vector field. As we have no control over the extremal vector field when constructing twisted extremal metrics on $X$, we are forced to work with potentials invariant under the maximal comapct subgroup $K=\Isom_0(\pi, \omega_X)$. We expect that a better understanding of geodesics in the space of relatively cscK metrics would allow us to remove our $K$-invariance assumption. We also expect, however, that boundedness on $K$-invariant metrics is equivalent to boundedness with no invariance assumption.
\end{remark}

Finally, the same proof as that of Theorem \ref{thm:energy}  allows us to deduce basic properties of the functional $\scN$ from corresponding properties of $\scM$. In order to state the third property, which demonstrates how $\scN$ depends on the reference metric, we will (from the second property) write $\scN_{\omega_X}(\omega_X')$ for value of the functional $\scN$ at $\omega_X'$ with reference metric $\omega_X$. 

\begin{lemma}\label{lemma:independence}The functional $\scN: \scK_E \to \R$ satisfies the following properties:
\begin{enumerate}
\item it is well-defined, independent of choice of path $\phi_t$. Moreover, $\scN$ is invariant under adding a constant to $\phi$;
\item it is independent of adding a constant to $\phi$, hence can viewed as a functional on the space of relatively cscK metrics, rather than potentials;
\item for relatively cscK metrics $\omega_X, \omega_X', \omega_X'' \in \alpha$, it satisfies the cocycle condition $$\scN_{\omega_X}(\omega_X') + \scN_{\omega_X'}(\omega_X'')+\scN_{\omega_X''}(\omega_X) = 0.$$
\end{enumerate}

 \end{lemma}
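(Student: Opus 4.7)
The strategy, as suggested, mirrors the proof of Theorem \ref{thm:energy}: deduce each statement from the corresponding standard property of the modified twisted Mabuchi functional $\scM_{\xi,k}$ by means of the asymptotic expansion
$$\scM_{\xi,k}(\phi)-\scM_{\xi,k}(\psi)=\binom{m+n}{n}k^{n-1}\bigl(\scN(\phi)-\scN(\psi)\bigr)+O(k^{n-2})$$
supplied by Lemma \ref{lem:energyasymptotic}. The key observation is that $\scN(\phi)-\scN(\psi)$ is independent of $k$, so it is determined by the coefficient of $k^{n-1}$ in the asymptotic expansion of the left hand side as $k\to\infty$; any feature of $\scM_{\xi,k}$ that is visible at leading order therefore descends to $\scN$. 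A direct inspection of the proof of Lemma \ref{lem:energyasymptotic} shows that only the relatively cscK property of the endpoints is used, so the expansion applies with any relatively cscK reference metric.

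The invariance under constants (ii) I prefer to prove directly from the defining equation \eqref{eqn:energyfnal}. If $\phi_t$ is a path from $0$ to $\phi$ in $\scK_E$, then $\phi_t+tc$ is a path from $0$ to $\phi+c$ in $\scK_E$ along which the underlying relatively cscK metrics $\omega_t=\omega_X+\ddb\phi_t$ are unchanged, while the time derivative is replaced by $\dot\phi_t+c$. The variation is thus altered only by
$$c\int_X p_t(\theta_t)\,\omega_t^m\wedge\omega_B^n=c\int_B\left(\int_{X/B}p_t(\theta_t)\,\omega_t^m\right)\omega_B^n,$$
which vanishes because $p_t(\theta_t)\in C^{\infty}_{E_t}(X)$ is by definition fibrewise of mean value zero. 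Hence $\scN(\phi+c)=\scN(\phi)$.

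For (i), given two paths $\phi^1_t,\phi^2_t$ in $\scK_E$ from $0$ to $\phi$ yielding candidate values $\scN^i(\phi)$, the standard closedness of the one-form defining $\scM_{\xi,k}$ on the full space of K\"ahler potentials implies that $\scM_{\xi,k}(\phi)-\scM_{\xi,k}(0)$ is path-independent; in particular it takes the same value along $\phi^1_t$ and $\phi^2_t$. Lemma \ref{lem:energyasymptotic} applied along each of these paths yields the same left hand side equal to $\binom{m+n}{n}k^{n-1}\scN^i(\phi)+O(k^{n-2})$, and extracting the $k^{n-1}$-coefficient forces $\scN^1(\phi)=\scN^2(\phi)$. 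For (iii), the cocycle identity for $\scM_{\xi,k}$,
$$\scM_{\xi,k}^{\omega_X}(\omega_X')+\scM_{\xi,k}^{\omega_X'}(\omega_X'')+\scM_{\xi,k}^{\omega_X''}(\omega_X)=0,$$
is likewise a standard consequence of the same closedness together with its transparent behaviour under change of reference. Applying Lemma \ref{lem:energyasymptotic} to each summand with the appropriate reference and summing produces
$$0=\binom{m+n}{n}k^{n-1}\bigl(\scN_{\omega_X}(\omega_X')+\scN_{\omega_X'}(\omega_X'')+\scN_{\omega_X''}(\omega_X)\bigr)+O(k^{n-2}),$$
and the bracket must vanish as it is $k$-independent. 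I do not anticipate any serious obstacle: all three parts are formal consequences of Lemma \ref{lem:energyasymptotic} together with well-known properties of the Mabuchi-type functional, the only verification required being that the expansion extends to arbitrary relatively cscK references, which is immediate from the proof of that lemma.
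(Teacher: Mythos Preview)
Your proposal is correct and follows essentially the same approach as the paper: deduce path-independence and the cocycle identity for $\scN$ from the corresponding properties of the (modified twisted) Mabuchi functional by extracting the $k^{n-1}$-coefficient via the expansion of Lemma \ref{lem:energyasymptotic}, and handle invariance under constants directly from the definition. The only cosmetic difference is that the paper refers to the ordinary Mabuchi functional $\scM_k$ rather than $\scM_{\xi,k}$, and simply declares (ii) ``obvious from the definition'' where you spell out the vanishing of the fibre integral of $p_t(\theta_t)$; your version is slightly more explicit but otherwise identical in substance.
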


\begin{proof} (i) This follows from the proof of Theorem \ref{thm:energy} and the fact that the Mabuchi functional itself is well-defined \cite{mabuchi-functional}. Indeed, suppose $\phi_t, \psi_t$ are two paths of relatively cscK metrics joining $0$ to $\phi$. Then $k\omega_B + \omega_X + \ddb \phi_t$ and $k\omega_B + \omega_X + \ddb \psi_t$ are K\"ahler metrics in $k\beta + \alpha$ for all $k \gg 0$, and hence if $\scM_k$ denotes the Mabuchi functional (with respect to the reference metric $k\omega_B + \omega_X$), then $$\scM_k(\phi) = \int_0^1 \left(\frac{d}{dt}\scM_k(\phi_t)\right)dt =  \int_0^1 \left(\frac{d}{dt}\scM_k(\psi_t)\right)dt .$$ Expanding in $k$ and arguing as in the proof of Theorem \ref{thm:energy} shows that $$ \int_0^1\left(\frac{d}{dt}\scN(\phi_t)\right)dt =  \int_0^1 \left(\frac{d}{dt}\scN(\psi_t)\right)dt,$$ which implies that $\scN: \scK_E \to \R$ is well-defined, as required. 

(ii) This is obvious from the definition of $\scN$.

(iii) The Mabuchi functional satisfies the cocycle property \cite[Theorem 2.4]{mabuchi-functional} $$\scM_{k\omega_B +\omega_X}(k\omega_B +\omega_X') + \scM_{k\omega_B +\omega_X'}(k\omega_B +\omega_X'') + \scM_{k\omega_B +\omega_X ''}(k\omega_B +\omega_X) = 0,$$ and expanding in $k$ as above shows that the same property is true for the functional $\scN$.
\end{proof}

It follows from $(iii)$ that boundedness of $\scN$ is independent of choice of reference relatively cscK metric.


\vspace{4mm}

\end{document}